\title{Cohomology and products of real weight filtrations}
\author{Thierry Limoges and Fabien Priziac}
\date{}
\newtheorem{thm}{Theorem}[section]
\newtheorem{cor}[thm]{Corollary}
\newtheorem{lem}[thm]{Lemma}
\newtheorem{prop}[thm]{Proposition}
\theoremstyle{definition}
\newtheorem{definition}[thm]{Definition}
\theoremstyle{remark}
\newtheorem{rem}[thm]{Remark}
\numberwithin{equation}{section}
\newcommand{\supp}{\mathrm{supp}}
\newcommand{\im}{\mathrm{im}}
\newcommand{\RR}{\mathbb{R}}
\newcommand{\ZZ}{\mathbb{Z}}
\newcommand{\NN}{\mathbb{N}}
\newcommand{\dis}{\displaystyle}
\newcommand{\lee}{\leqslant}
\newcommand{\gee}{\geqslant}
\newcommand{\WC}{\mathcal W C^*}
\newcommand{\Cc}{\mathcal C}
\newcommand{\Gc}{\mathcal G}
\newcommand{\Wc}{\mathcal{W}}
\newcommand{\Nc}{\mathcal N}
\newcommand{\Kc}{\mathcal K}
\newcommand{\Hoc}{H o \, \mathcal C}
\newcommand{\Schc}{\mathrm{\bf{Sch}}_c(\RR)}
\newcommand{\Reg}{\mathrm{\bf{Reg}}_{comp}(\RR)}
\newcommand{\V}{\mathrm{\bf{V}}(\RR)}
\newcommand{\lra}{\longrightarrow}
\newcommand{\lmt}{\longmapsto}
\newcommand{\sg}{{\bf s}}
\newcommand{\Dec}{\mathrm{Dec}}
\newcommand{\otz}{\otimes_{\ZZ_2}}
\newcommand{\Wcc}{\mathcal W C_*}
\newcommand{\Xbar}{\overline X}
\renewcommand{\Xbar}{\overline X}
\newcommand{\hra}{\hookrightarrow}
\newcommand{\AS}{\mathcal{AS}}
\newcommand{\lk}{\mathrm{lk}}
\newcommand{\lla}{\longleftarrow}
\newcommand{\ot}{\otimes}
\begin{document}

\maketitle

\begin{abstract}
We associate to each algebraic variety defined over $\RR$ a filtered cochain complex, which computes the cohomology with compact supports and $\ZZ_2$-coefficients of the set of its real points. This filtered complex is additive for closed inclusions and acyclic for resolution of singularities, and is unique up to filtered quasi-isomorphism. It is represented by the dual filtration of the geometric filtration on semialgebraic chains with closed supports defined by McCrory and Parusi\'nski, and leads to a spectral sequence which computes the weight filtration on cohomology with compact supports. This spectral sequence is a natural invariant which contains the additive virtual Betti numbers.

We then show that the cross product of two varieties allows us to compare the product of their respective weight complexes and spectral sequences with those of their product, and prove that the cup and cap products in cohomology and homology are filtered with respect to the real weight filtrations.
\end{abstract}
\footnote{
Keywords : real algebraic varieties, weight filtrations, cohomology with compact supports, invariants, cross product, cup and cap products, Poincar\'e duality.
\\
{\it 2010 Mathematics Subject Classification :} 14P25, 14P10, 55U25. 
}

\section{Introduction}

\selectlanguage{english}

In \cite{Del}, Deligne showed the existence of a weight filtration on the rational cohomology with compact supports of complex algebraic varieties, using mixed Hodge structures. In the real case, where there is no such structure, using the work of Guillén and Navarro-Aznar on cubical hyperresolutions (\cite{GNAPP} and \cite{GNA}), Totaro proposed in \cite{Tot} an analog of the weight filtration on the cohomology with compact supports and $\ZZ_2$-coefficients of the set of real points of real algebraic varieties, and in \cite{MP}, McCrory and Parusi\'nski developped a homological analog on the Borel-Moore homology with $\ZZ_2$-coefficients (in a subsequent paper, McCrory and Parusi\'nski also defined and studied a weight filtration on classical homology, with compact supports and coefficients in $\mathbb{Z}_2$ : see \cite{MP2}).

In \cite{MP}, McCrory and Parusi\'nski showed furthermore that the spectral sequence inducing the weight filtration (unlike the complex case, this spectral sequence does not degenerate at level two in general) is itself a natural invariant of real algebraic varieties : there is a functor that assigns to each real algebraic variety a filtered chain complex, the weight complex, which is functorial for proper regular morphisms, additive for closed inclusions, acyclic for resolution of singularities, and unique up to filtered quasi-isomorphism, inducing the weight spectral sequence and the weight filtration on Borel-Moore homology. We can also extract the virtual Betti numbers (\cite{MPVB}) from the weight spectral sequence. Moreover, the weight complex can be realized at the chain level by a filtration defined on semialgebraic chains with closed supports, using resolution of singularities. This geometric filtration is functorial with respect to semialgebraic maps with $\mathcal{AS}$ graph.
\\

In this paper, we first achieve the cohomological counterpart of McCrory and Parusi\'nski's work. Using the extension criterion of Guill\'en and Navarro-Aznar (\cite{GNA}), we associate to any real algebraic variety a filtered cochain complex, that we call the cohomological weight complex, which induces Totaro's weight filtration on the cohomology with compact supports and $\mathbb{Z}_2$-coefficients of the set of real points of real algebraic varieties  as well as the associated cohomological weight spectral sequence (theorem \ref{extension}, proposition \ref{propweight}, corollary \ref{fcoho} and proposition \ref{hcspec}).   

In section \ref{dgf}, we construct a filtration that realizes at the cochain level the cohomological weight complex (theorem \ref{coincid}) : it is a dualization of the geometric filtration of \cite{MP}. This allows us to show in particular that the cohomological weight spectral sequence and filtration are dual to the homological ones (corollary \ref{wfaredual}).
\\

The second part of this paper is devoted to the question of the compatibility of the real weight filtrations with products. First, if $X$ and $Y$ are two real algebraic varieties, we define the product of two respective semialgebraic chains of $X$ and $Y$ in a natural way (definition \ref{defprod}). We then look at its compatibility with the geometric filtration (proposition \ref{prodchain}). Finally, we show that there is a filtered quasi-isomorphism between the tensor product of the geometric filtrations of $X$ and $Y$ and the geometric filtration of the cross product $X \times Y$ (theorem \ref{prodhom}). In particular, the weight complex of the product is isomorphic to the product of the weight complexes and the K\"unneth isomorphism is filtered with respect to the weight filtration. The induced relations on weight spectral sequences can also be used to prove the multiplicativity of the virtual Poincar\'e polynomial. These results have then their cohomological counterparts (paragraph \ref{pcwcsect}).

Finally, for $Y = X$, we define a cup product and a cap product on the dual geometric and geometric filtrations of $X$ considered in the category of filtered complexes localized with respect to filtered quasi-isomorphisms (paragraphs \ref{cupsect} and \ref{capsect}), proving in particular that the usual cup and cap products in cohomology and homology are filtered with respect to the weight filtrations of $X$. In the last subsection \ref{wfpdmsect}, we study the cap product with the fundamental class of a compact variety at the weight spectral sequences level, which brings us some obstructions for a compact singular real algebraic variety to satisfy Poincar\'e duality.
\\

{\bf Acknowledgements.} The authors wish to thank G. Fichou, C. McCrory and A. Parusi\'nski for useful discussions and comments. 

\section{Framework} \label{framesect}

In this section, we set the context in which we work in this paper. We first fix precisely the source categories (categories of schemes over $\RR$) and target categories (categories of filtered chain and cochain complexes) of the functors we are going to deal with. We then describe the geometry of real algebraic varieties we study.

\subsection{Filtered cochain complexes} \label{cat}

In this paper, we will work with cochain complexes equipped with a decreasing filtration. We will use the following notations :
\begin{itemize}
	\item 
$\mathfrak{C}$ will denote the category of bounded cochain complexes of $\ZZ_2$-vector spaces with bounded decreasing filtration.\\

To each filtered complex $(K^*,F^{\bullet})$ of $\mathfrak{C}$ is associated a second quadrant spectral sequence $E$ with
$$E_0^{p,q}= \frac{F^{p} K^{p+q}}{F^{p+1} K^{p+q}} \mbox{ and } E_1^{p,q}= H^{p+q}\left(\frac{F^{p} K^*}{F^{p+1} K^*}\right),$$
the differential $d$ of the spectral sequence being induced by the coboundary operator of $K^*$.

The filtration $F^{\bullet}$ on $K^*$ induces naturally a filtration on the cohomology of $K^*$ by setting
$$F^{p} H^{q}(K) := \im \left[ H^q(F^pK^*) \lra H^q(K^*)\right],$$
and the spectral sequence $E$ converges to the cohomology $H^*(K^*)$ of $K^*$, that is
$$E_{\infty}^{p,q}= \frac{F^{p} H^{p+q}(K^*)}{F^{p+1} H^{p+q}(K^*)}.$$ 

A morphism of filtered complexes which induces an isomorphism on $E_1$ (and therefore on all $E_r$ from $r \gee 1$) will be called a \emph{quasi-isomorphism of $\mathfrak{C}$}, or simply a \emph{filtered quasi-isomorphism}.

\item
$H o \, \mathfrak{C}$ denotes the localization of $\mathfrak{C}$ with respect to quasi-isomorphisms of $\mathfrak{C}$ (we keep the notation of \cite{GNA}, 1.5.1).

\item
$\mathfrak{D}$ will denote the category of cochain complexes of $\ZZ_2$-vector spaces, and a morphism of cochain complexes which induces an isomorphism on their cohomology will be called a \emph{quasi-isomorphism of $\mathfrak{D}$}, or simply a \emph{quasi-isomorphism}.
\item
$H o \, \mathfrak{D}$ denotes the localisation of $\mathfrak{D}$ with respect to quasi-isomorphisms of $\mathfrak{D}$.
\end{itemize}

\begin{rem}
Let $\phi \ : \ \mathfrak{C} \lra \mathfrak{D}$ be the forgetful functor forgetting the filtration. It can be localized into a functor $\ H o \, \mathfrak{C} \lra H o \, \mathfrak{D}$, which we denote again by $\phi$, because a filtered quasi-isomorphism is in particular a quasi-isomorphism.
\end{rem}

\subsection{Real algebraic varieties}

We are  interested in the study of the geometry of the set of real points of real algebraic varieties. In this paper, a real algebraic variety will be a reduced scheme of finite type defined over $\RR$. We denote by :
\begin{itemize}
\item
$\Schc$ the category of real algebraic varieties and proper regular morphisms.
\item
$\Reg$ the full subcategory of $\Schc$ whose objects are compact nonsingular varieties, that is proper regular schemes.
\item
$\V$ the full subcategory of $\Schc$ whose objects are nonsingular projective varieties, that is regular projective schemes.
\end{itemize}

For $X$ a real algebraic variety of $\Schc$, we denote by $X_{\RR}$ the set of its real points. Equipped with its sheaf of regular functions, the set $X_{\RR}$ is a real algebraic variety in the sense of \cite{BCR}, which can be locally embedded in an affine space $\RR^n$. We equip it with the strong topology of $\RR^n$, and then $X_{\RR}$ is a Haussdorff space, locally compact.

\subsection{Semialgebraic chain and cochain complexes} \label{semich}

Let $X$ be a real algebraic variety. We will consider complexes of semialgebraic chains defined using semialgebraic subsets of $X_{\mathbb{R}}$. In this paper, we will always work with $\ZZ_2$-coefficients, so that real algebraic varieties and arc-symmetric sets (\cite{BCR}, \cite{KP}) always have a ($\mathbb{Z}_2$-)orientation and a fundamental class (recall that real algebraic varieties may not be $\mathbb{Z}$-oriented, as $\mathbb P^2(\RR)$).

We will consider the two following dual complexes : 
\begin{itemize}
\item 
the chain complex $(C_*(X),\partial_*)$ of semialgebraic chains of $X_{\mathbb{R}}$ with closed supports, whose homology is the Borel-Moore homology $H_*(X) := H_*^{BM}(X_{\mathbb{R}}, \mathbb{Z}_2)$ of $X_{\mathbb{R}}$ with coefficients in $\mathbb{Z}_2$ (see Appendix, paragraph 6 of \cite{MP}),

\item
the cochain complex $(C^*(X),\delta^*)$ which will be by definition the dual of $(C_*(X),\partial_*)$ and whose cohomology $(H_*(X))^{\vee}$ is, by proposition \ref{homdual} below, isomorphic to the cohomology with compact supports $H^*_c(X_{\mathbb{R}}, \mathbb{Z}_2)$ of $X_{\mathbb{R}}$ with coefficients in $\mathbb{Z}_2$.
\end{itemize}

\begin{prop} \label{homdual}
With the notations above, the Borel-Moore homology $H_*(X) := H_*^{BM}(X_{\mathbb{R}}, \mathbb{Z}_2)$ and the cohomology with compact supports $H^*(X) := H^*_c(X_{\mathbb{R}}, \mathbb{Z}_2)$ of $X(\mathbb{R})$ are dual :
$$H^*(X) = \left( H_*(X) \right) ^{\vee}$$
\end{prop}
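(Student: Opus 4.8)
The plan is to split the statement into a purely algebraic step and a geometric duality step. By construction $C^*(X) = \Hom_{\ZZ_2}(C_*(X), \ZZ_2)$, and since $\ZZ_2$ is a field the functor $\Hom_{\ZZ_2}(-, \ZZ_2)$ is exact. Hence the universal coefficient theorem carries no $\Ext$ correction term, and I would first establish the natural isomorphism
$$H^n(C^*(X)) = H^n\big(\Hom_{\ZZ_2}(C_*(X), \ZZ_2)\big) \cong \Hom_{\ZZ_2}\big(H_n(C_*(X)), \ZZ_2\big) = (H_n(X))^\vee,$$
obtained by dualizing the defining short exact sequences $0 \to B_n \to Z_n \to H_n \to 0$ and $0 \to Z_n \to C_n \to B_{n-1} \to 0$ and using that $\Hom_{\ZZ_2}(-,\ZZ_2)$ preserves exactness over a field. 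This identifies the cohomology of the dual complex with the dual of Borel--Moore homology, with no ambiguity arising from torsion or orientation, precisely because we work with $\ZZ_2$-coefficients.

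It then remains to identify $(H_*(X))^\vee = (H_*^{BM}(X_{\RR}, \ZZ_2))^\vee$ with the cohomology with compact supports $H^*_c(X_{\RR}, \ZZ_2)$. This is the classical duality between Borel--Moore homology and compactly supported cohomology over a field, and I would argue it through the one-point compactification $X_{\RR}^+$. As the set of real points of a real algebraic variety, $X_{\RR}$ is locally compact and admits a semialgebraic triangulation, so $X_{\RR}^+$ is a compact triangulable space. Using the standard identifications $H^*_c(X_{\RR}, \ZZ_2) \cong \widetilde H^*(X_{\RR}^+, \ZZ_2)$ and $H_*^{BM}(X_{\RR}, \ZZ_2) \cong \widetilde H_*(X_{\RR}^+, \ZZ_2)$, and applying the ordinary universal coefficient theorem on the compact space $X_{\RR}^+$ (again with vanishing $\Ext$ over $\ZZ_2$), one obtains $\widetilde H^*(X_{\RR}^+) \cong (\widetilde H_*(X_{\RR}^+))^\vee$, which, combined with the first step, yields the claim.

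The algebraic step is routine; the substantive point, and the main obstacle, is the geometric duality of the second step together with its chain-level compatibility. Concretely, one must check that the abstract dual $\Hom_{\ZZ_2}(C_*(X), \ZZ_2)$ of the closed-support semialgebraic chain complex is the correct cochain model for $H^*_c$, that is, that the tautological pairing between closed-support chains and the compactly supported cochains dual to them induces the duality pairing on (co)homology. I expect this to reduce, after triangulation, to the familiar fact that compactly supported cochains pair perfectly with locally finite chains; alternatively it can be read off from the sheaf-theoretic Verdier duality between the dualizing complex computing $H_*^{BM}$ and the constant sheaf computing $H^*_c$, which over the field $\ZZ_2$ is a perfect duality with no derived correction. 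Either way, the delicate part is the bookkeeping of supports rather than any deep cohomological input.
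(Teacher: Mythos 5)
Your proof is correct, but it takes a genuinely different route from the paper's. The paper also reduces to the compact case, but via an \emph{algebraic} compactification: it chooses an open immersion $X \hookrightarrow \overline{X}$ into a complete variety, writes Borel--Moore homology and compactly supported cohomology as the relative homology and cohomology of the pair $(\overline{X}_{\RR}, \overline{X}_{\RR} \setminus X_{\RR})$, and compares the two long exact sequences of this pair: since $\overline{X}_{\RR}$ and $\overline{X}_{\RR} \setminus X_{\RR}$ are compact, their Borel--Moore homology and compactly supported cohomology coincide with singular homology and cohomology, which are dual over the field $\ZZ_2$, and the five lemma then transfers the duality to the remaining terms. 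You instead use the one-point compactification $X_{\RR}^+$, the identifications $H^*_c(X_{\RR}) \cong \widetilde{H}^*(X_{\RR}^+)$ and $H_*^{BM}(X_{\RR}) \cong \widetilde{H}_*(X_{\RR}^+)$, and the universal coefficient theorem on the single compact space $X_{\RR}^+$. This is more economical --- no five lemma, no choice of algebraic compactification, and the compactification is canonical --- at the price of invoking the ``good pair at infinity'' facts behind those identifications; these do hold here, since a locally compact semialgebraic set has a semialgebraic, hence triangulable, one-point compactification, as you note. Your closing worry about chain-level compatibility of the pairing is not actually an obstacle for the statement as the paper uses it: the proposition is only invoked to identify the cohomology of the dual complex $C^*(X)$ --- which is $(H_*(X))^{\vee}$ by your first, purely algebraic step --- with $H^*_c(X_{\RR},\ZZ_2)$ as graded vector spaces, and the paper's own argument produces only such an abstract isomorphism as well (indeed the five-lemma ladder there implicitly relies on the same kind of natural pairing you discuss).
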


\begin{proof} Borel-Moore homology and cohomology with compact supports of $X_{\mathbb{R}}$ can be defined respectively as the relative homology and cohomology of the pair $(\Xbar_{\mathbb{R}} , \Xbar_{\mathbb{R}} \setminus X_{\mathbb{R}})$, where $X \hookrightarrow \overline X$ is an open compactification of $X$.

The closed inclusion $\Xbar_{\mathbb{R}} \setminus X_{\mathbb{R}} \hra \Xbar_{\mathbb{R}}$ induces the following long exact sequences of homology and cohomology of the pair $(\Xbar_{\mathbb{R}} , \Xbar_{\mathbb{R}} \setminus X_{\mathbb{R}})$ :
$$\cdots  \lra H^{BM}_n(\Xbar_{\mathbb{R}} \setminus X_{\mathbb{R}} ) \lra  H^{BM}_n(\Xbar_{\mathbb{R}}) \lra H^{BM}_n(X_{\mathbb{R}})  \lra H^{BM}_{n-1}(\Xbar_{\mathbb{R}}  \setminus X_{\mathbb{R}} ) \lra \cdots$$
$$\cdots  \lla H^n_c(\Xbar_{\mathbb{R}} \setminus X_{\mathbb{R}} ) \lla  H^n_c(\Xbar_{\mathbb{R}}) \lla H^n_c(X_{\mathbb{R}})  \lla H^{n-1}_c(\Xbar_{\mathbb{R}}  \setminus X_{\mathbb{R}} ) \lla \cdots$$
The dual of the first sequence is then isomorphic to the second one by the five lemma, because the sets $\overline X_{\mathbb{R}}$ and $\overline X_{\mathbb{R}} \setminus X_{\mathbb{R}}$ are compact and consequently their Borel-Moore homology and cohomology with compact supports are respectively isomorphic to their singular homology and cohomology which are dual to each other.
\end{proof}

\begin{rem} The cohomology with compact supports is normally computed from the complex of cochains with compact supports. However, this complex does not have good additivity properties, in contrast with the complex $C^*$.
\end{rem}

\section{Cohomological weight complex} \label{cwf}

We prove the existence and uniqueness of the cohomological weight complex in a way similar to the method of \cite{MP}, using Theorem 2.2.2 of Guillén and Navarro-Aznar in \cite{GNA}. 

The Theorem 2.2.2 of \cite{GNA} is a criterion of extension to all (possibly singular or non-compact) varieties for functors defined on nonsingular projective varieties. In this paper we consider varieties over $\RR$ and the target category will be $H o \, \mathfrak{C}$. The initial functor will be the functor which assigns to a real nonsingular projective variety its complex of semialgebraic cochains equipped with the canonical filtration defined below (definition \ref{canco}). The verification of the criterion will follow from a short exact sequence for the cohomology of a blowing-up (remark \ref{seqcobl}), showing the existence of a unique extension to all real algebraic varieties and proper regular morphisms (which we call the cohomological weight complex), satisfying conditions of additivity for closed inclusions and acyclicity for generalized blow-ups (paragraph \ref{conscow}).

In paragraph \ref{cow}, we show that the spectral sequence induced by the cohomological weight complex (well-defined only from level one) converges to the cohomogical weight filtration on the cohomology with compact supports and, in paragraph \ref{cowvb}, that one can recover, as in \cite{MP}, section 1, the virtual Betti numbers (\cite{MPVB}) from its level one terms. The virtual Betti numbers are the unique additive invariants of real algebraic varieties coinciding with the usual Betti numbers on compact nonsingular varieties.

Finally, in paragraph \ref{wcub}, we give to the cohomological weight spectral sequence of a compact real algebraic variety the following viewpoint : it can be regarded as the spectral sequence naturally induced from a cubical hyperresolution of the variety.

\begin{definition} \label{defsg}
Keeping the notations from \cite{MP} and \cite{GNA}, for $n \gee 0$, let $\square^+_n$ be the set of subsets of $\{0,1,\dots,n\}$, partially ordered by inclusion. A \emph{cubical diagram} of type $\square^+_n$ in a category $\mathcal X$ is by definition a contravariant functor from $\square^+_n$ to $\mathcal X$. If $\Kc$ is a cubical diagram of type $\square^+_n$ in $\mathfrak{C}$, let $K^{*,S}$ be the complex labelled by the subset $S\subset \{0,1,\dots n\}$ and $|S|$ denote the number of elements of $S$. The {\it simple complex} $\sg \Kc$ is defined by
$$\sg \Kc^k :=  \bigoplus_{i+|S|=k} \Kc^{i,S}$$
with differentials $\delta \ : \ \sg \Kc^k \lra \sg \Kc^{k+1}$
defined as follows. 

For each $S$, let $\delta' \ : \ K^{i,S} \lra K^{i+1,S}$ be the differential of $K^{*,S}$.
If $T\subset S$ and $|T| = |S|-1$, let $\delta_{S,T} \ : \ K^{*,S} \lra K^{*,T}$ be the cochain map corresponding
to the inclusion of $T$ in $S$. If $a \in K^{i,S}$, let $\delta''(a) := \sum \delta_{S,T}(a)$ where the sum is taken over all $T\subset S$ such that $|T| = |S|-1$, and $\delta(a) := \delta'(a) + \delta''(a)$.

The induced filtration on $\sg \Kc$ is given by
$F^p \sg \Kc := \sg F^p \Kc$.
\end{definition}

\begin{rem} We distinguish the simple complex of a cubical diagram of type $\square^+_n$ of (filtered) cochain or chain complexes, as defined in the definition \ref{defsg} above, and the total complex associated to a double complex. 
\end{rem}

\begin{definition} \label{canco}
Let $(K^*, \delta)$ be a cochain complex. We define the canonical filtration $F_{can}^{\bullet}$ by $$F_{can}^p K^q := \left\{ \begin{array}{ll}
K^q &  \mathrm{\ if\ } q < -p\\
\ker \delta_q &  \mathrm{\ if\ } q = -p\\
0 &  \mathrm{\ if\ } q > -p
\end{array} \right.$$
\end{definition}

The filtered complex $F_{can}^{\bullet} K^*$ defines a second quadrant spectral sequence that converges to the cohomology of $K^*$ at level one :
\begin{equation} \label{speccan} 
E_{\infty}^{p,q} = E_1^{p,q} = \left\{ \begin{array}{ll}
\dis \frac{\ker \delta_{-p}}{\im \delta_{-p-1}} = H^{p+q}(K^*) &  \mathrm{\ if\ } p+q = -p,\\
0 &  \mathrm{\ otherwise.}
\end{array} \right.
\end{equation}

\subsection{The construction of the cohomological weight complex} \label{conscow}

We define a functor $\WC$ : $\Schc \longrightarrow H o \, \mathfrak{C}$ such that, for $X$ an objet of $\Schc$, the homology of the complex $\phi ( \WC (X) )$ is $H^*(X)$ (recall that $\phi$ denotes the forgetful functor). The spectral sequence $E_r$, $r = 1,2, \dots$ associated to $\WC (X)$, converges to $H^*(X)$. In particular, it induces a filtration on the cohomology with compact supports of $X_{\mathbb{R}}$. 

Theorem {\bf (2.2.2)} of \cite{GNA} allows us to prove the existence and uniqueness of the functor $\WC$ with properties of extension, additivity and acyclicity. We keep the notations from \cite{GNA} and \cite{MP}. 

\begin{thm}\label{extension}
The contravariant functor 
$$F_{can}C^* \ : \ \V \lra H o \, \mathfrak{C}$$
which assigns to a nonsingular projective variety $M$ the semialgebraic cochain complex with closed supports $C^*(M)$ equipped with the canonical filtration extends to a contravariant functor
$$\WC \ : \ \Schc \lra H o \, \mathfrak{C}$$
satisfying :\\

(Ac) For an acyclic square $\begin{array}{ccc}
\widetilde Y & \stackrel{j}{\hookrightarrow} & \widetilde X\\
\downarrow_{\pi} & & \downarrow_{\pi}\\
Y & \stackrel{i}{\hookrightarrow} & X
\end{array}$ the simple filtered complex of the diagram
$$\begin{array}{ccc}
\WC(\widetilde Y) & \stackrel{j^*}{\longleftarrow} & \WC(\widetilde X)\\
\uparrow_{\pi^*} & & \uparrow_{\pi^*}\\
\WC(Y) & \stackrel{i^*}{\longleftarrow} & \WC(X)
\end{array}$$
is acyclic.\\

(Ad) For a closed inclusion $Y \stackrel{i}{\hookrightarrow} X$, the simple filtered complex of the diagram
$$\WC(Y) \stackrel{i^*}{\longleftarrow} \WC(X)$$
is quasi-isomorphic in $\mathfrak{C}$ to $\WC(X \setminus Y)$.\\

Such a functor $\WC$ is unique up to a unique filtered quasi-isomorphism.
\end{thm}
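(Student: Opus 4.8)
The plan is to obtain $\WC$ by invoking the extension criterion of Guill\'en and Navarro-Aznar, Theorem 2.2.2 of \cite{GNA}, following the strategy by which McCrory and Parusi\'nski built the homological weight complex in \cite{MP}. Before applying it, I would record that the target $H o \, \mathfrak{C}$, together with its class of filtered quasi-isomorphisms and the simple complex operation $\sg$ of definition \ref{defsg} playing the role of the total functor, forms a category of cohomological descent in the sense of \cite{GNA}; this is merely the contravariant, cochain-level mirror of the homological framework of \cite{MP} and needs no new idea. It then remains to check that the initial functor $F_{can}C^* : \V \lra H o \, \mathfrak{C}$ satisfies the hypotheses of the criterion, namely a normalization/additivity condition and the acyclicity condition for elementary blow-up squares.

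First I would verify that $F_{can}C^*$ is a well-defined contravariant functor to $H o \, \mathfrak{C}$. Every morphism of $\V$ is proper, so the Borel-Moore pushforward on semialgebraic chains with closed supports dualizes to a pullback $f^* : C^*(N) \lra C^*(M)$, making $C^*$ contravariant; and since a cochain map commutes with the coboundary operators, it automatically preserves the canonical filtration of definition \ref{canco}, which is defined only through kernels of the differential. Thus each $f^*$ is a morphism of $\mathfrak{C}$. The normalization conditions are immediate: $C^*(\emptyset)=0$, and $C^*(M \sqcup N) = C^*(M) \oplus C^*(N)$ compatibly with the canonical filtrations.

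The heart of the argument, and the step I expect to be the main obstacle, is the acyclicity condition for an elementary blow-up square
$$\begin{array}{ccc}
\widetilde Y & \stackrel{j}{\hookrightarrow} & \widetilde X\\
\downarrow_{\pi} & & \downarrow_{\pi}\\
Y & \stackrel{i}{\hookrightarrow} & X
\end{array}$$
with $X \in \V$, nonsingular center $Y \subset X$, blow-up $\pi : \widetilde X \lra X$ and exceptional divisor $\widetilde Y$. Applying $F_{can}C^*$ and forming the simple complex, I must show the resulting object of $\mathfrak{C}$ is acyclic, that is, that its $E_1$ term vanishes. The decisive feature of the canonical filtration is that its spectral sequence degenerates at level one and recovers cohomology (equation \eqref{speccan}); because of this, passing to the associated graded turns the internal coboundary and the face maps $i^*, \pi^*, j^*$ into a single $E_1$ differential, so that the $E_1$ term of the simple filtered complex of the square is precisely the Mayer-Vietoris complex of the square in cohomology. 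Filtered acyclicity is therefore equivalent to the exactness, for every $n$, of the short exact sequence for the cohomology of a blowing-up
$$0 \lra H^n(X) \lra H^n(\widetilde X) \oplus H^n(Y) \lra H^n(\widetilde Y) \lra 0$$
established in remark \ref{seqcobl}. The delicate point is precisely this reduction: one must track the grading conventions of $\sg$ together with the canonical filtration to see that $E_1$-acyclicity is genuinely \emph{equivalent} to, and not merely a consequence of, the exactness of this sequence.

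Finally, with both hypotheses of Theorem 2.2.2 of \cite{GNA} verified, I would invoke the criterion to produce a contravariant functor $\WC : \Schc \lra H o \, \mathfrak{C}$ extending $F_{can}C^*$ and satisfying the acyclicity property (Ac) for acyclic squares and the additivity property (Ad) for closed inclusions. The asserted uniqueness of $\WC$ up to a unique filtered quasi-isomorphism is then part of the conclusion of the criterion, so no separate argument is required.
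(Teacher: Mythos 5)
Your proposal is correct and follows essentially the same route as the paper's own proof: both verify the hypotheses of Theorem 2.2.2 of \cite{GNA} for $F_{can}C^*$ --- it is $\Phi$-rectified because it factors through $\mathfrak{C}$, it satisfies disjoint additivity (F1), and elementary acyclicity (F2) reduces to the short exact sequence $0 \lra H^q(X) \lra H^q(\widetilde X) \oplus H^q(Y) \lra H^q(\widetilde Y) \lra 0$ of remark \ref{seqcobl} --- and then existence, (Ac), (Ad) and uniqueness all come from the criterion itself. One bookkeeping correction to your (F2) step: for the filtration $F^p \sg \Kc = \sg F^p \Kc$, the face maps $i^*, \pi^*, j^*$ preserve the filtration index $p$, so they contribute to the differential $d_0$ of the spectral sequence, not to $d_1$; consequently $E_1(\sg \Kc)$ is not ``the Mayer--Vietoris complex of the square in cohomology'' but the \emph{cohomology} of that complex --- this is exactly the paper's explicit computation of $H^k(E_0^{p,*})$, whose possibly nonzero terms are the kernel, the middle homology and the cokernel of the blow-up sequence. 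This slip does not affect your conclusion: $E_1(\sg \Kc) = 0$ is indeed equivalent to the exactness of that sequence, as you assert, so your reduction and the appeal to remark \ref{seqcobl} complete the argument just as in the paper.
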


\begin{rem} \label{seqcobl}
The proof uses ingredients analogous to the ones in homological weight complex existence and uniqueness' proof in \cite{MP}, in particular the fact that, for an elementary acyclic square
\begin{equation} \label{elemacysq}
\begin{array}{ccc}
\widetilde Y & \stackrel{j}{\hookrightarrow} & \widetilde X\\
\downarrow_{\pi} & & \downarrow_{\pi}\\
Y & \stackrel{i}{\hookrightarrow} & X
\end{array}
\end{equation}
the sequences
$$0 \lra H^{q}(X) \lra H^{q}(\widetilde X) \oplus H^{q}(Y) \lra H^{q}(\widetilde Y) \lra 0$$
are exact for all $q \in \NN$ (this uses Poincar\'e duality : see the proof of Proposition 2.1 of \cite{MPVB}).
\end{rem}

\begin{proof}\emph{(of Theorem \ref{extension})} Since the functor $F_{can}C^* \ : \ \V \lra H o \, \mathfrak{C}$ can be factorized through~$\mathfrak{C}$, it is $\Phi$-rectified. It remains to check the hypotheses (F1) and (F2) of theorem {\bf (2.2.2)} of \cite{GNA}.\\
\\
(F1) The inclusions $X \stackrel{i_X}{\hookrightarrow} X \sqcup Y$ and $Y \stackrel{i_Y}{\hookrightarrow} X \sqcup Y$ glue into an isomorphism $C^*(X \sqcup Y) \stackrel{i_X^* \oplus i_Y^*}{\lra} C^*(X) \oplus C^*(Y)$. As a consequence, $F_{can}C^*(X \sqcup Y) \cong F_{can}C^*(X) \oplus F_{can}C^*(Y)$.\\
(F2) For an elementary acyclic square (\ref{elemacysq}) we check that the following diagram, denoted by $\Kc$,
$$\begin{array}{ccc}
F_{can}C^*(\widetilde Y) & \stackrel{j^*}{\longleftarrow} & F_{can}C^*(\widetilde X)\\
\uparrow_{\pi^*} & & \uparrow_{\pi^*}\\
F_{can}C^*(Y) & \stackrel{i^*}{\longleftarrow} & F_{can}C^*(X)
\end{array}$$
is acyclic. In other words, we check that the spectral sequence associated to its simple filtered diagram satisfies $E_1(\sg \mathcal{K}) = 0$.

Let $p \in \mathbb{Z}$. Similarly to \cite{MP}, proof of Theorem 1.1, the homology of the $p$-th column $(E_0^{p,*}(\sg \mathcal{K}),d^{p,*})$ of $E_0(\sg \mathcal{K})$ is given by 
$$H^{k}(E_0^{p,*}) = \left\lbrace \begin{array}{ll}
 0 & \mathrm{\ for \ } k = -p,\\
 \dis \ker \left[ H^{-p}(X) \lra H^{-p}(\widetilde X) \oplus H^{-p}(Y) \right] & \mathrm{\ for \ } k = -p+1,\\
 \dis \frac{\ker \left[ H^{-p}(\widetilde X) \oplus H^{-p}(Y) \lra H^{-p}(\widetilde Y) \right]}{\im \left[ H^{-p}(X) \lra H^{-p}(\widetilde X) \oplus H^{-p}(Y) \right]} & \mathrm{\ for \ } k = -p+2,\\
 \dis \frac{H^{-p}(\widetilde Y)}{\im \left[ H^{-p}(\widetilde X) \oplus H^{-p}(Y) \lra H^{-p}(\widetilde Y) \right]} & \mathrm{\ for \ } k = -p+3,\\
 0 & \mathrm{\ otherwise.}
\end{array} \right.$$
These spaces are all $0$ (see previous remark \ref{seqcobl}) and therefore $E_1( \sg \Kc ) = 0$.
\end{proof}

\begin{rem} \label{remapext}

\begin{itemize} 
	\item The extension theorem 2.2.2 of \cite{GNA} gives us also the fact that the functor $\mathcal{W} C^*$ is $\Phi$-rectified.

	\item Let $(E_r)_{r \gee 0}$ be the spectral sequence associated to the filtered complex $\WC(X)$ provided by theorem \ref{extension}. By definition of the category $H o \, \mathfrak{C}$, the terms $E_r$ for $r = 1,2, \dots$ are well-defined and do not depend on the construction of $\WC(X)$. On the other hand, $E_0$ depends on this construction, that is on the chosen cubical hyperresolution of $X$ (see paragraph \ref{wcub} below).
\end{itemize}

\end{rem}

\subsection{Cohomological weight filtration} \label{cow}

For $X$ a real algebraic variety, we call the filtered complex $\WC(X)$ the \emph{cohomological weight complex} of $X$. 

The cohomological weight complex computes the cohomology with compact supports of the set of real points of real algebraic varieties :

\begin{prop} \label{propweight}
For $X$ an object of $\Schc$, the homology of the complex $\phi (\WC (X))$ is $H^*(X)$.
\end{prop}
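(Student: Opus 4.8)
The plan is to identify $\phi(\WC(X))$, as an object of $H o \, \mathfrak{D}$, with the plain (unfiltered) semialgebraic cochain complex $C^*(X)$, and then read off its cohomology. By construction $\WC$ is the extension of $F_{can}C^*$ furnished by Theorem \ref{extension}, so composing with the forgetful functor gives a contravariant functor $\phi \circ \WC : \Schc \lra H o \, \mathfrak{D}$ that restricts on $\V$ to $\phi \circ F_{can}C^* = C^*$, since forgetting the canonical filtration on $C^*(M)$ returns $C^*(M)$ itself. I would therefore invoke the extension criterion of Guill\'en and Navarro-Aznar in the unfiltered target $H o \, \mathfrak{D}$, whose uniqueness clause forces any two extensions of $C^*|_{\V}$ satisfying additivity and acyclicity to agree up to quasi-isomorphism.

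First I would check that the assignment $X \mapsto C^*(X)$ is itself such an extension. Additivity over disjoint unions is immediate from the splitting $C^*(X \sqcup Y) \cong C^*(X) \oplus C^*(Y)$. For a closed inclusion $Y \stackrel{i}{\hra} X$, the short exact sequence of semialgebraic chain complexes with closed supports $0 \lra C_*(Y) \lra C_*(X) \lra C_*(X \setminus Y) \lra 0$ from \cite{MP} dualizes, over the field $\ZZ_2$, to the short exact sequence $0 \lra C^*(X \setminus Y) \lra C^*(X) \stackrel{i^*}{\lra} C^*(Y) \lra 0$; this exhibits the simple complex of $C^*(Y) \stackrel{i^*}{\lla} C^*(X)$ as quasi-isomorphic to $C^*(X \setminus Y)$, which is the unfiltered analogue of (Ad). For the acyclicity condition, the blow-up exact sequences of remark \ref{seqcobl} show that, for an acyclic square, the simple complex of the corresponding square of cochain complexes has vanishing cohomology, which is the unfiltered analogue of (Ac).

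Next I would observe that $\phi \circ \WC$ also satisfies these unfiltered conditions: the forgetful functor $\phi$ commutes with the formation of simple complexes and carries filtered quasi-isomorphisms to quasi-isomorphisms, so properties (Ac) and (Ad) of $\WC$ descend directly to $\phi \circ \WC$. By the uniqueness part of the extension theorem applied in $H o \, \mathfrak{D}$, the two extensions coincide, yielding a quasi-isomorphism $\phi(\WC(X)) \simeq C^*(X)$. Finally, since $C^*(X)$ is by definition the $\ZZ_2$-dual of $C_*(X)$ and $\ZZ_2$ is a field, the universal coefficient theorem gives $H^*(C^*(X)) \cong (H_*(X))^{\vee}$, which equals $H^*(X)$ by proposition \ref{homdual}; hence the cohomology of $\phi(\WC(X))$ is $H^*(X)$.

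The main obstacle I anticipate is purely in verifying the hypotheses rather than in the final identification: one must confirm that the dualized sequence genuinely encodes the simple-complex (mapping cone) condition (Ad) in the unfiltered setting, and that the uniqueness statement of the Guill\'en--Navarro-Aznar criterion is available for the target $H o \, \mathfrak{D}$ --- this being the same criterion applied with trivial filtration. Once these formalities are in place, the chain of quasi-isomorphisms together with the universal coefficient computation is routine.
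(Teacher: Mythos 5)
Your proposal is correct and follows essentially the same route as the paper: both arguments verify that $C^*(\cdot)$ and $\phi \circ \WC$ satisfy the additivity and acyclicity hypotheses of the Guill\'en--Navarro-Aznar extension theorem in the unfiltered target $H o \, \mathfrak{D}$ (the former via the dualized short exact sequences of \cite{MP} and the blow-up sequences of remark \ref{seqcobl}), note they agree on $\V$, and conclude by the uniqueness clause that $\phi(\WC(X))$ is quasi-isomorphic to $C^*(X)$, whose cohomology is $H^*(X)$ by proposition \ref{homdual}. Your write-up merely makes explicit some steps the paper leaves implicit, such as $\phi$ commuting with simple complexes and the final universal-coefficient identification.
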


\begin{proof}
The functors $\phi \circ \WC$ and $C^*( \cdot )$ both satisfy the additivity and acyclicity properties of Theorem {\bf (2.2.2)} of \cite{GNA} (we have short exact sequences of additivity and acyclicity for the semialgebraic cochain complex, which are dual to the ones in \cite{MP}, proof of Proposition 1.5). Since they are furthermore equal on objects of $\V$, these functors $\Schc \lra H o \, \mathfrak{D}$ are isomorphic, thanks to the uniqueness provided by the extension theorem. In particular, the semialgebraic cochain complex with closed supports and the cohomological weight complex compute the same cohomology.
\end{proof}

\begin{cor} \label{fcoho}
By theorem \ref{extension} and previous property \ref{propweight}, we obtain a filtration on the cohomology with compact supports of the set of real points of real algebraic varieties :
$$H^k(X) = \Wc^{-k} H^k(X) \supset \Wc^{-k+1} H^k(X) \supset \cdots \supset \Wc^{0} H^k(X) \supset \Wc^{+1} H^k(X) = \left\lbrace 0 \right\rbrace,$$
called the \emph{cohomological weight filtration}.

We say that the cohomological weight filtration of a variety $X$ is \emph{pure} if for all $k \in \mathbb{Z}$ the space $\Wc^{-k+1} H^k(X)$ is 0.
\end{cor}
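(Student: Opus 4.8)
The plan is to read the filtration off the filtered structure of $\WC(X)$ and then to pin down its two extremities by locating the cohomological degrees in which the associated graded complex has nonzero cohomology. By Proposition \ref{propweight} the forgetful functor identifies $H^k(\phi(\WC(X)))$ with $H^k(X)$, so the decreasing filtration $F^{\bullet}$ carried by $\WC(X)$ induces, following \S\ref{cat},
$$\Wc^p H^k(X) := \im \left[ H^k(F^p \WC(X)) \lra H^k(X) \right].$$
First I would check that this is well defined on $H o \, \mathfrak{C}$: a filtered quasi-isomorphism induces isomorphisms on every $E_r$ with $r \gee 1$, hence on the induced filtration of the abutment, so $\Wc^{\bullet} H^k(X)$ does not depend on the chosen representative of $\WC(X)$. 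As the filtration of $\WC(X)$ is bounded, $\Wc^{\bullet} H^k(X)$ is decreasing, exhaustive and separated; it remains only to locate its jumps, that is to prove $\Wc^{-k} H^k(X) = H^k(X)$ and $\Wc^{+1} H^k(X) = 0$.

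Both extremities I would reach through a concrete representative $\WC(X) = \sg \Kc$, where $\Kc$ is the cubical diagram obtained by applying $F_{can} C^*$ to each nonsingular projective term $M_S$ of a cubical hyperresolution of $X$ (this is licit since, as recalled above, $E_1$ and the induced filtration are invariants in $H o \, \mathfrak{C}$). The upper bound is then immediate: the semialgebraic cochain complexes $C^*(M_S)$ are concentrated in nonnegative degrees and $F_{can}^1 C^q = 0$ as soon as $q > -1$, so $F^1 \sg \Kc = \sg F^1 \Kc = 0$, whence $\Wc^{+1} H^k(X) = \im[H^k(0) \lra H^k(X)] = 0$ for every $k$. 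For the lower bound I would pass to the spectral sequence: by its convergence $\Gr^p H^k(X) = E_{\infty}^{p,k-p}$ is a subquotient of $E_1^{p,k-p} = H^k(\Gr^p \WC(X))$, so it suffices to show $E_1^{p,q} = 0$ whenever $p < -(p+q)$.

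The crux is the computation of $H^*(\Gr^p \sg \Kc)$. The canonical filtration has been designed precisely so that the cohomology of the $F_{can}$-graded of each term is concentrated in a single degree: by \eqref{speccan}, $H^i(\Gr_{can}^p C^*(M_S))$ equals $H^{-p}(M_S)$ for $i = -p$ and vanishes otherwise. Filtering $\Gr^p \sg \Kc$ by the cardinality $|S|$ and running the induced spectral sequence, whose first differential is the internal coboundary and whose second is assembled from the cubical maps $\delta''$, every contribution to $E_1^{p,q} = H^{p+q}(\Gr^p \sg \Kc)$ is a subquotient of $\bigoplus_{|S| = 2p+q} H^{-p}(M_S)$. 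Non-vanishing then forces $-p \gee 0$, i.e.\ $p \lee 0$, and the existence of a label $S$ with $|S| = 2p+q \gee 0$, i.e.\ $p \gee -(p+q) = -k$; in particular $E_1^{p,k-p} = 0$ for $p < -k$. Hence $\Wc^p H^k(X)$ is constant for $p \lee -k$ and, being exhaustive, equals $H^k(X)$ there, which together with the previous paragraph yields the stated range. I expect the main obstacle to be exactly this control of $H^*(\Gr^p \sg \Kc)$: one must track carefully the interplay of the canonical filtration with the cubical differentials and the degree bookkeeping producing the constraint $|S| = 2p+q$, and (for non-compact $X$) make sure the hyperresolution pieces entering the simple complex are genuinely objects of $\V$, so that \eqref{speccan} applies term by term.
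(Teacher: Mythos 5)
Your proposal is correct and is essentially the argument the paper intends: Corollary \ref{fcoho} is stated without a separate proof, as an immediate consequence of Theorem \ref{extension} and Proposition \ref{propweight}, and your verification of the two extremities — representing $\WC(X)$ by the simple complex of canonically filtered complexes of the nonsingular projective pieces of a (compatible pair of) cubical hyperresolution(s), then using the concentration \eqref{speccan} of $H^*(\Gr^p_{can}C^*(M_S))$ in degree $-p$ to get $E_1^{p,q}=0$ unless $p\lee 0$ and $2p+q\gee 0$ — is exactly the implicit content, and matches the paper's own hyperresolution viewpoint in paragraph \ref{wcub}. The only point to watch is the indexing convention for the degree shift ($|S|$ versus $|S|-1$, cf.\ $X^{(i)}$ with $|S|=i+1$ in paragraph \ref{wcub}); the convention consistent with purity of nonsingular projective varieties gives the bound $2p+q\gee 0$ you use, so the stated range is unaffected.
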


It will be shown in section \ref{grad} that the cohomological weight filtration is dual to the weight filtration on Borel-Moore homology of \cite{MP}. In particular, the cohomological weight filtration of a real algebraic variety $X$ is pure if and only if its homological weight filtration is pure.

\subsection{Cohomological weight spectral sequence and virtual Betti numbers} \label{cowvb}

Analogously to \cite{MP}, we recover the virtual Betti numbers from the \emph{cohomological weight spectral sequence}, which is by definition the spectral sequence $E_r$ associated to the cohomological weight complex $\WC(X)$ of $X$ (it is well-defined for $r \gee 1$ : see remark \ref{remapext}). We reindex it by setting $\widetilde E_r^{p,q} = E_{r-1}^{-q,p+2q}$. Notice that the column $(-p)$ of $E_1$ is sent to the row $p$ of $\widetilde E_2$, the line $p+q=-p$ is sent to the vertical line $p=0$ and the lines $p+q$ = constant are globally preserved.

\begin{lem} \label{dimfinie}
The vector spaces appearing in $E_1$ (or equivalently $\widetilde E_2$) have finite dimension.
\end{lem}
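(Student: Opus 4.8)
The plan is to compute the $E_1$ term on a convenient representative of $\WC(X)$ in $H o\,\mathfrak{C}$ and to bound it by the cohomology of the nonsingular projective varieties occurring in a cubical hyperresolution of $X$. By remark \ref{remapext} the terms $E_r$ for $r \gee 1$ do not depend on the chosen construction of $\WC(X)$, so I am free to use the representative furnished by the extension theorem \ref{extension}: if $X_\bullet$ is a cubical hyperresolution of $X$ of type $\square^+_n$, with nonsingular projective (hence compact) components $X_S$ for $S \subseteq \{0,1,\dots,n\}$, then $\WC(X)$ is represented by the simple filtered complex $\sg\Kc$ of the cubical diagram $\Kc = F_{can}C^*(X_\bullet)$, each vertex carrying the canonical filtration (definitions \ref{defsg} and \ref{canco}).

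Next I would identify the associated graded. Since $\sg$ is a direct sum over $S$ of (shifted) cochain groups and the filtration on the simple complex is $F^p\sg\Kc = \sg F^p\Kc$, passing to the associated graded commutes with $\sg$:
$$\Gr^p_F \sg\Kc \;=\; \sg\bigl( \Gr^p_{F_{can}} C^*(X_\bullet) \bigr),$$
the simple complex of the cubical diagram of graded pieces. Thus $E_1^{p,q} = H^{p+q}\bigl( \sg( \Gr^p_{F_{can}} C^*(X_\bullet)) \bigr)$, and it remains to control the cohomology of this finite simple complex.

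The key input is the degeneration property (\ref{speccan}) of the canonical filtration on a single complex: for each fixed $S$, the graded piece $\Gr^p_{F_{can}} C^*(X_S)$ is a two-term complex whose cohomology is concentrated in cochain degree $-p$, where it equals $H^{-p}(X_S)$, and vanishes otherwise. Because $X_S$ is compact, $H^{-p}(X_S) = H^{-p}_c((X_S)_{\RR}, \ZZ_2)$ is finite-dimensional and nonzero for only finitely many $p$. Filtering the finite simple complex $\sg(\Gr^p_{F_{can}} C^*(X_\bullet))$ by the cardinality $|S|$ yields an auxiliary spectral sequence whose first page is the finite direct sum $\bigoplus_S H^{-p}(X_S)$, with differentials induced by the maps $\delta_{S,T}$ of the hyperresolution. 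Consequently its cohomology, namely $E_1^{p,q}$, is a subquotient of $\bigoplus_S H^{-p}(X_S)$, and therefore finite-dimensional; the reindexing $\widetilde E_2^{p,q} = E_1^{-q,\,p+2q}$ then gives the statement for $\widetilde E_2$ as well.

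The only genuine obstacle is the passage from the infinite-dimensional cochain groups $C^i(X_S)$, which constitute the $E_0$ page and already depend on the construction, to the finite-dimensional $E_1$ level. This is precisely handled by the fact that the canonical filtration computes cohomology componentwise at the first page, so that the information surviving to $E_1$ is organised by the finitely many finite-dimensional groups $H^{-p}(X_S)$ and the induced maps between them; the rest is bookkeeping with the bigrading of the simple complex.
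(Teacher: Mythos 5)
Your argument is correct, and essentially the paper's own argument, in the compact case: there it amounts to representing $\WC(X)$ by the simple filtered complex of a cubical hyperresolution and observing that the canonical filtration computes cohomology vertex-wise, so that $E_1^{p,q}$ is a subquotient of $\bigoplus_S H^{-p}(X_S)$. But the lemma is asserted for every $X$ in $\Schc$, and your very first step fails when $X$ is not compact: a cubical hyperresolution with nonsingular \emph{projective} (hence compact) components $X_S$ cannot exist for non-compact $X$, since the augmentation morphisms $X_S \to X$ are proper and their images must cover $X$, so $X$ would be a finite union of compact subsets. Nor can you repair this by allowing smooth non-compact components: the extension theorem identifies $\WC(X_S)$ with $F_{can}C^*(X_S)$ only for $X_S$ compact nonsingular (proposition \ref{compnonsing}); for a smooth non-compact variety the weight filtration on $H^*_c$ is in general not pure, so the graded pieces of the canonical filtration on $C^*(X_S)$ no longer compute the $E_1$ term of $\WC$, and your key input (\ref{speccan}) is not available vertex-wise.

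The missing step, which is exactly how the paper concludes, is to propagate the compact case to the general case by additivity. Write $X = \overline{X} \setminus Y$ with $\overline{X}$ a real algebraic compactification and $Y = \overline{X} \setminus X$ closed. By property (Ad) of theorem \ref{extension}, $\WC(X)$ is filtered quasi-isomorphic to the simple filtered complex $\sg\Kc$ of the diagram $\WC(Y) \lla \WC(\overline{X})$. Since $F^p\sg\Kc = \sg F^p\Kc$, the graded piece $\Gr^p_F \sg\Kc$ is the mapping cone of $\Gr^p\WC(\overline{X}) \lra \Gr^p\WC(Y)$, so each $E_1^{p,q}(\sg\Kc)$ sits in a long exact sequence between the terms $E_1^{p,*}(\overline{X})$ and $E_1^{p,*}(Y)$, which are finite-dimensional by your compact-case computation applied to $\overline{X}$ and $Y$. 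Hence $E_1^{p,q}(X)$ is finite-dimensional, and the reindexing gives the claim for $\widetilde E_2$. With this added paragraph your proof is complete and matches the paper's.
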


\begin{proof}
It will be shown in paragraph \ref{wcub} that, for compact varieties, the spectral sequence $\widetilde E_r$ is isomorphic to the spectral sequence $\widehat E_r$ induced by the double complex associated to a cubical hyperresolution (from $r \gee 2$). Since the latter is computed from cohomologies of real algebraic varieties, its terms are finite-dimensional. We next use the additivity property of the cohomological weight complex to prove the non-compact case.
\end{proof}

\begin{prop} \label{vbnc}
The $q$-th virtual Betti number can be read on the $q$-th row of $\widetilde E_2$ :
$$\beta_q(X) = \sum_{p=0}^{\dim X} (-1)^p \dim_{\ZZ_2} \widetilde E_2^{p,q}(X)$$
\end{prop}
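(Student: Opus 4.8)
The plan is to recover the virtual Betti numbers from the reindexed cohomological weight spectral sequence by comparing it, via the Euler characteristic of each row, with the known homological result of McCrory and Parusi\'nski. The key observation is that the virtual Betti numbers are uniquely characterized as the additive invariants of real algebraic varieties which coincide with the ordinary Betti numbers on compact nonsingular varieties; hence it suffices to show that the quantity $\sum_{p=0}^{\dim X} (-1)^p \dim_{\ZZ_2} \widetilde E_2^{p,q}(X)$ defines such an invariant for each fixed $q$.

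First I would verify the formula directly on compact nonsingular varieties $M$. For such $M$ the initial functor is $F_{can}C^*(M)$, whose associated spectral sequence (formula \eqref{speccan}) degenerates at $E_1$ onto the cohomology $H^*(M)$ concentrated on the line $p+q=-p$. Tracing through the reindexing $\widetilde E_r^{p,q} = E_{r-1}^{-q,p+2q}$, this collapse places the cohomology along the vertical line $p=0$ of $\widetilde E_2$, so that $\widetilde E_2^{0,q}(M) \cong H^q(M)$ and $\widetilde E_2^{p,q}(M)=0$ for $p \neq 0$. The alternating sum over $p$ then reduces to $\dim_{\ZZ_2} H^q(M)$, which is exactly the $q$-th Betti number of $M$, matching $\beta_q(M)$.

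Next I would establish additivity: for a closed inclusion $Y \hookrightarrow X$ with open complement $U = X \setminus Y$, the quantity must satisfy $\sum_p (-1)^p \dim \widetilde E_2^{p,q}(X) = \sum_p (-1)^p \dim \widetilde E_2^{p,q}(Y) + \sum_p (-1)^p \dim \widetilde E_2^{p,q}(U)$. This is where the acyclicity and additivity properties (Ac) and (Ad) of Theorem \ref{extension} enter. Property (Ad) gives a filtered quasi-isomorphism between the simple filtered complex of $\WC(Y) \longleftarrow \WC(X)$ and $\WC(U)$, which at the level of the $E_1$ terms (equivalently $\widetilde E_2$) yields a long exact sequence relating the three spectral sequences along each line $p+q = \mathrm{constant}$. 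Because the reindexing preserves the lines $p+q=\mathrm{constant}$ globally and Lemma \ref{dimfinie} guarantees finite-dimensionality, the alternating sum of dimensions along such a line is additive in the usual Euler-characteristic fashion, and one extracts additivity row by row in the $\widetilde E_2$ picture.

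The main obstacle I anticipate is bookkeeping the reindexing carefully enough to confirm that the alternating sum is taken over the correct index (here $p$, the row being fixed at $q$) and that the long exact sequence induced by (Ad) respects this grading so that the Euler characteristic argument applies row-wise rather than being scrambled across rows. Once additivity and the normalization on compact nonsingular varieties are both in hand, the uniqueness of the virtual Betti numbers as additive invariants extending the Betti numbers (\cite{MPVB}) forces the equality $\beta_q(X) = \sum_{p=0}^{\dim X} (-1)^p \dim_{\ZZ_2} \widetilde E_2^{p,q}(X)$ for all $X$, completing the proof.
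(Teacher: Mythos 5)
Your overall strategy is exactly the paper's: verify the formula on compact nonsingular varieties, prove additivity for closed inclusions, and conclude by the uniqueness of the virtual Betti numbers from \cite{MPVB}; your reindexing computation placing $H^*(M)$ on the column $p=0$ of $\widetilde E_2$ is also correct. There is, however, a genuine gap in your normalization step. You assert that for a compact nonsingular $M$ ``the initial functor is $F_{can}C^*(M)$'' and then read off the degeneration from \eqref{speccan}. But $F_{can}C^*$ is the initial functor only on $\V$, i.e.\ on nonsingular \emph{projective} varieties; for a compact nonsingular variety which is not projective, $\WC(M)$ is produced by the extension theorem \ref{extension}, and its identification up to filtered quasi-isomorphism with $F_{can}C^*(M)$ is not definitional. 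That identification is precisely proposition \ref{compnonsing} of the paper, whose proof needs a separate argument (the inclusion $\V \rightarrow \Reg$ has the extension property of \cite{GNA} (2.1.10), and $F_{can}C^*$ and $\WC$ are both extensions over $\Reg$ of the same functor on $\V$, hence filtered quasi-isomorphic). Since the characterization of the virtual Betti numbers you invoke requires agreement with the Betti numbers on \emph{all} compact nonsingular varieties, your argument as written establishes the normalization only on $\V$, and the appeal to uniqueness does not close without proposition \ref{compnonsing} --- which is exactly the proposition the paper's own proof cites at this point.

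On the additivity step, your conclusion is right but the mechanism is misdescribed, and the worry you flag yourself resolves favorably once it is stated correctly. The long exact sequence induced by (Ad) does not run ``along each line $p+q=$ constant'': for the simple complex (a cone) of $\WC(Y) \longleftarrow \WC(X)$, the $E_0$ term in a fixed filtration degree is the cone of the corresponding $E_0$ columns, so the resulting long exact sequence at level $E_1$ relates $E_1^{p,q}(X \setminus Y)$, $E_1^{p,q}(X)$, $E_1^{p,q}(Y)$, $E_1^{p,q+1}(X\setminus Y)$, \dots\ with the first index $p$ \emph{fixed}, i.e.\ it runs along a column of $E_1$. Under the reindexing $\widetilde E_2^{p,q} = E_1^{-q,p+2q}$, the column of $E_1$ with first index $-q$ becomes precisely the row $q$ of $\widetilde E_2$, traversed with the new index $p$ increasing by one at each step. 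Hence the Euler-characteristic argument (legitimate by lemma \ref{dimfinie} and boundedness) applies row by row, as the formula requires; the correct justification is this column-wise exact sequence, not exactness along anti-diagonals of constant total degree.
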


\begin{proof}
As in \cite{MP}, paragraph 1.3, the right-hand side of the formula equals $\dis b_q(X) := \dim_{\ZZ_2} H^q(X)$ for $X$ compact nonsingular (in this case,  according to proposition \ref{compnonsing} below, $\WC(X)$ is filtered quasi-isomorphic to $C^*(X)$ equipped with the canonical filtration), and is additive for a closed inclusion $Y \stackrel{i}{\hookrightarrow} X$ : this gives us the result by the uniqueness with such properties of the virtual Betti numbers, see \cite{MPVB}.

\end{proof}

\begin{prop} \label{compnonsing}
For $X$ compact nonsingular, the cohomology of the complex $\dis \frac{\mathcal W^pC^*(X)}{\mathcal W^{p+1}C^*(X)}$ satisfies
$$\dis H^k \left( \frac{\mathcal W^pC^*(X)}{\mathcal W^{p+1}C^*(X)} \right) = \left\{ \begin{array}{ll}
H^{p}(X) &  \mathrm{\ if\ } k = -p,\\
0 &  \mathrm{\ otherwise.}
\end{array} \right.$$
In other words, the cohomological weight filtration of a compact nonsingular real algebraic variety is pure.
\end{prop}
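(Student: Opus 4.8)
The plan is to reduce the statement to the formal computation \eqref{speccan} of the graded pieces of the canonical filtration. The key point to establish first is that, for $X$ compact nonsingular, the cohomological weight complex $\WC(X)$ is represented in $\mathfrak{C}$ by the semialgebraic cochain complex $C^*(X)$ equipped with its canonical filtration, i.e. that there is a filtered quasi-isomorphism $\WC(X) \simeq (C^*(X), F_{can}^{\bullet})$. Granting this, the graded quotient $\mathcal{W}^pC^*(X)/\mathcal{W}^{p+1}C^*(X)$ is quasi-isomorphic to $F_{can}^pC^*(X)/F_{can}^{p+1}C^*(X)$ and hence has the same cohomology. Unwinding definition \ref{canco}, this last complex is concentrated in the two degrees $-p-1$ and $-p$, where it equals $C^{-p-1}(X)/\ker \delta_{-p-1}$ and $\ker \delta_{-p}$ respectively, the induced coboundary being injective; its cohomology is therefore $\ker\delta_{-p}/\im\delta_{-p-1} = H^{-p}(C^*(X))$ in degree $k=-p$ and $0$ otherwise. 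This is exactly the right-hand side of \eqref{speccan} for $K^* = C^*(X)$, and gives the asserted purity of the cohomological weight filtration.

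It remains to produce the filtered quasi-isomorphism $\WC(X) \simeq (C^*(X), F_{can}^{\bullet})$. If $X$ is furthermore projective, that is an object of $\V$, this holds by construction, since $F_{can}C^*$ is precisely the functor extended by theorem \ref{extension}. The genuine work is for a compact nonsingular $X$ that need not be projective, where $\WC(X)$ is a priori only computed through a cubical hyperresolution by nonsingular projective varieties. Here I would argue by d\'evissage, dual to the homological argument of \cite{MP}: by Hironaka's resolution such a hyperresolution can be built from elementary acyclic squares (\ref{elemacysq}) whose four varieties are all nonsingular and compact, the vertical maps being blow-ups along nonsingular centres of strictly smaller dimension. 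The decisive input is remark \ref{seqcobl}: for such squares Poincar\'e duality makes the long exact cohomology sequences split into the short exact sequences
$$0 \lra H^q(X) \lra H^q(\widetilde X) \oplus H^q(Y) \lra H^q(\widetilde Y) \lra 0.$$
Combining these with the acyclicity property (Ac) of $\WC$ and an induction on $\dim X$ applied to the (lower-dimensional, nonsingular, compact) centres, one identifies the simple filtered complex of the hyperresolution with $(C^*(X), F_{can}^{\bullet})$ up to filtered quasi-isomorphism, the short exactness being exactly what forces the weight spectral sequence to collapse onto the canonical filtration.

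I expect this identification, in the non-projective compact nonsingular case, to be the main obstacle: one has to check that the Poincar\'e-duality short exact sequences make the simple filtered complex of the hyperresolution degenerate onto the canonical filtration compatibly with the filtrations, and independently of the chosen resolution. Once this reduction is secured, the graded-piece computation is the purely formal statement \eqref{speccan}, and the purity of the cohomological weight filtration follows at once.
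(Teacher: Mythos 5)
Your reduction is the same as the paper's: everything rests on producing a filtered quasi-isomorphism between $\WC(X)$ and $(C^*(X),F_{can}^{\bullet})$ for $X$ compact nonsingular, after which the computation of the graded pieces is the formal identity (\ref{speccan}) (your answer $H^{-p}(X)=H^{k}(X)$ is indeed what this identity gives; the $H^{p}(X)$ in the statement is a harmless misprint). The geometric input you isolate --- the Poincar\'e-duality short exact sequences of remark \ref{seqcobl}, valid for elementary acyclic squares of compact nonsingular, not necessarily projective, varieties --- is also exactly the paper's input. The divergence is in how the identification $\WC(X)\simeq F_{can}^{\bullet}C^*(X)$ is obtained. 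The paper performs no d\'evissage through hyperresolutions: it notes that $F_{can}^{\bullet}C^*$ is already defined as a functor on all of $\Reg$ (nothing needs to be reconstructed from resolutions), that the inclusion $\V \hookrightarrow \Reg$ has the extension property (2.1.10) of \cite{GNA}, and that both $F_{can}^{\bullet}C^*$ and $\WC$, restricted to $\Reg$, are extensions of $F_{can}^{\bullet}C^*|_{\V}$ satisfying additivity and acyclicity --- acyclicity of $F_{can}^{\bullet}C^*$ on acyclic squares in $\Reg$ being precisely remark \ref{seqcobl} combined with the column computation from the proof of theorem \ref{extension}. The uniqueness clause of that extension theorem then yields the filtered quasi-isomorphism at once, with no choice of resolution involved. (The homological Proposition 1.8 of \cite{MP}, which you describe as a d\'evissage, is in fact proved in this same abstract way.)

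The step you yourself flag as ``the main obstacle'' is a genuine gap in your route, and it is exactly the gap this citation closes. Your induction needs the filtered quasi-isomorphisms obtained on the centre, the exceptional divisor and the blown-up variety to be compatible with the maps of the acyclic square --- otherwise they do not assemble into a filtered quasi-isomorphism of simple filtered complexes --- and it needs the outcome to be independent of the chosen tower of blow-ups; organizing that coherence amounts to re-proving the uniqueness statement of Guill\'en--Navarro-Aznar's theorem in the special case of the inclusion $\V\subset\Reg$. So your plan is not wrong, but its missing step is most efficiently supplied by the paper's own move: verify that $F_{can}^{\bullet}C^*$ on $\Reg$ satisfies the hypotheses of the extension criterion (this is where remark \ref{seqcobl} enters) and then quote uniqueness, rather than rebuilding it by hand.
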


\begin{proof} The proof is similar to the proof of Proposition 1.8 in \cite{MP}. Since the inclusion $\mathbf{V}(\mathbb{R}) \rightarrow \mathbf{Reg}_{comp}(\mathbb{R})$ has the extension property of \cite{GNA} (2.1.10), the functor $F_{can}^* C^* : \mathbf{V}(\mathbb{R}) \rightarrow H o \, \mathfrak{C}$ extends to a functor $\mathbf{Reg}_{comp}(\mathbb{R}) \rightarrow H o \, \mathfrak{C}$, unique up to filtered quasi-isomorphism with extension, additivity and acyclicity properties. Since $F_{can}^* C^* : \mathbf{Reg}_{comp}(\mathbb{R}) \rightarrow H o \, \mathfrak{C}$ and $\mathcal{W} C^* :  \mathbf{Reg}_{comp}(\mathbb{R}) \rightarrow H o \, \mathfrak{C}$ are both such extensions (see the remark \ref{seqcobl} and the proof of theorem \ref{extension} : $F_{can}C^*$ is acyclic on acyclic squares in $\mathbf{Reg}_{comp}(\mathbb{R})$), they are quasi-isomorphic in $\mathfrak{C}$ and we obtain the result by the equality (\ref{speccan}).
\end{proof}

\subsection{Cohomological weight complex and cubical hyperresolutions} \label{wcub}

This subsection gives the following viewpoint for the weight complex : the spectral sequence given by theorem \ref{extension} is quasi-isomorphic to the spectral sequence induced by the double complex associated to a cubical hyperresolution. For an introduction to cubical hyperresolutions of algebraic varieties, see \cite{PSMHS}, ch. 5. 
\\

Here, we keep the notations from \cite{GNA}. For any compact real algebraic variety $X$, there exists a \emph{cubical hyperresolution} of $X$, which is a special case of cubical diagram (see definition \ref{defsg}) denoted by
$$X_{\bullet} = \left[ X_{\bullet}^+ \lra X \right],$$
where $X_{\bullet}^+$ is called the \emph{augmented cubical diagram}. 

A cubical hyperresolution $X_{\bullet}$ of $X$ is composed of varieties $X_S$, $S \in \mathcal{P}[\![1,n]\!]$, associated to the vertices of a $n$-dimensional cube, with $X_{\emptyset} = X$ and $X_S$ compact nonsingular for $S \neq \emptyset$, and of morphisms $\pi_{S,T}$ : $X_S \lra X_T$ for $T \subset S$, such that $\pi_{R,T} = \pi_{R,S} \circ \pi_{S,T}$ if $T \subset S \subset R$. 
\\

Let $X_{\bullet}$ be a cubical hyperresolution of a compact real algebraic variety $X$. We associate to $X_{\bullet}$ a double complex $C^{i,j}$, defined by $X^{(i)} := \bigsqcup_{S \subset [\![ 1,n ]\!], |S|=i+1} X_S$ and
$$C^{i,j} := C^j(X^{(i)}) \cong \bigoplus_{|S|=i+1} C^j(X_S)$$
for $i, j \gee 0$, and equipped with the differentials $\delta'_i \ : \ C^j(X^{(i)}) \lra C^j(X^{(i+1)})$,
induced by the morphisms $X^{(i+1)} \lra X^{(i)}$ (that is by the morphisms $X_S \rightarrow X_T$ with $T \subset S$ and $|T| = |S| - 1$), and $\delta''_j \ : \ C^j(X^{(i)}) \lra C^{j+1}(X^{(i)})$ which are the coboundary operators of the $X^{(i)}$'s.
\\

The double complex $C^{i,j}$ leads to a filtered complex $(C^*, \hat F)$ : it is the associated total complex equipped with the \emph{naive} filtration coming from the cubical diagram structure. Precisely, we set
$$C^k := \bigoplus_{i+j=k} C^j(X^{(i)})$$ 
and
$$\hat F^p C^k := \bigoplus_{i \gee p} C^{k-i} (X^{(i)}),$$
the differential being $\delta := \delta' + \delta''$.

Similarly to \cite{MP}, Proposition 1.9, since the functor $\WC$ is  acyclic for cubical hyperresolutions (see \cite{GNA}, proof of Theorem 2.1.5), the cohomological weight spectral sequence of $X$ is isomorphic (from level one) to the spectral sequence associated to the cochain complex $C^*$ equipped with (the filtration induced by) the canonical filtration denoted by $F_{can}$. Now, the Deligne shift (\cite{Del2} Paragraph 1.3) of the naive filtration $\hat F$ on $C^*$ is the canonical filtration $F_{can}$~:

\begin{lem} \label{Delshift}
$$\Dec (\hat F)^{p-k} C^k = F_{can}^{p-k} C^k$$
\end{lem}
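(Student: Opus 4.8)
The plan is to unwind both sides of the claimed equality $\Dec(\hat F)^{p-k} C^k = F_{can}^{p-k} C^k$ by comparing their explicit descriptions in each cohomological degree $k$, using the definition of the Deligne shift on the one hand and the definition \ref{canco} of the canonical filtration on the other. First I would recall the definition of the décalage: for a decreasing filtration $G^{\bullet}$ on a complex $(C^*,\delta)$, one sets
$$\Dec(G)^r C^k := \ker\left[ \delta : G^{r+k} C^k \lra C^{k+1}/G^{r+k+1} C^{k+1} \right] = \left\{ x \in G^{r+k} C^k \ : \ \delta x \in G^{r+k+1} C^{k+1} \right\}.$$
Substituting $r = p-k$ gives $\Dec(\hat F)^{p-k} C^k = \{ x \in \hat F^{p} C^k : \delta x \in \hat F^{p+1} C^{k+1} \}$, so the whole statement reduces to identifying this subspace with $F_{can}^{p-k} C^k$, and by definition \ref{canco} the latter is $C^k$ if $p-k < -k$ (i.e. $p<0$), $\ker \delta_k$ if $p=0$, and $0$ if $p>0$.

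Next I would analyse the three cases according to the sign of $p$. The key structural input is that the naive filtration $\hat F^{p} C^k = \bigoplus_{i \geq p} C^{k-i}(X^{(i)})$ is, for $p \leq 0$, all of $C^k$ (since the index $i$ ranges only over $i \geq 0$ and $C^{k-i}(X^{(i)}) = 0$ once $k-i<0$), and for $p \geq 1$ it strictly decreases by dropping the lowest cubical-degree summands. For $p < 0$ both $\hat F^{p} C^k = C^k$ and $\hat F^{p+1} C^{k+1} = C^{k+1}$, so the décalage condition $\delta x \in \hat F^{p+1} C^{k+1}$ is vacuous and I recover $\Dec(\hat F)^{p-k} C^k = C^k = F_{can}^{p-k} C^k$. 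For $p > 0$ the containment $\hat F^{p} C^k \subset \hat F^{p} C^k$ together with a dimension/degree comparison forces the subspace to vanish, matching $F_{can}^{p-k}C^k = 0$; here I would check directly that the shift kills everything of cubical degree below $p$. The genuinely interesting case is $p = 0$.

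The case $p=0$ is where I expect the main obstacle. Here $\hat F^{0} C^k = C^k$ while $\hat F^{1} C^{k+1} = \bigoplus_{i \geq 1} C^{k+1-i}(X^{(i)})$, so the condition $\delta x \in \hat F^{1} C^{k+1}$ says precisely that the component of $\delta x$ in cubical degree $0$, namely in $C^{k+1}(X^{(0)})$, vanishes. Writing $\delta = \delta' + \delta''$ and observing that $\delta'$ raises cubical degree by one while $\delta''$ preserves it, the degree-zero component of $\delta x$ is controlled by $\delta''$ applied to the lowest piece of $x$; the task is to show this condition is equivalent to $x$ lying in $\ker \delta_k$ in the sense of definition \ref{canco}, i.e. in the kernel of the total differential $\delta$ in an appropriate graded sense. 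The care needed is to untangle how the bigrading of the double complex $C^{i,j}$ interacts with the shift, and to verify that the ``$q=-p$'' bookkeeping of the canonical filtration (where $\ker\delta$ appears exactly on the diagonal) is exactly reproduced by the single nontrivial degree-zero constraint imposed by the décalage. I would handle this by fixing a homogeneous element $x = \sum_i x_i$ with $x_i \in C^{k-i}(X^{(i)})$ and tracking the cubical degree of each term of $\delta x$ explicitly, so that the vanishing of the lowest term becomes a transparent cocycle condition identifying the two filtration pieces. This degree-chasing, while elementary, is the crux and must be done with the indices handled precisely rather than schematically.
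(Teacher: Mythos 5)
Your unwinding of the d\'ecalage is correct ($\Dec(\hat F)^{p-k}C^k=\{x\in\hat F^{p}C^k \ : \ \delta x\in\hat F^{p+1}C^{k+1}\}$), and so is your treatment of the case $p<0$; but the proof breaks down because you have misidentified the right-hand side. In this lemma $F_{can}^{p-k}C^k$ does \emph{not} denote the canonical filtration of the total complex $(C^*,\delta)$ computed with the total differential; it denotes the filtration induced on the total complex by the canonical filtrations of the columns $C^*(X^{(i)})$, in the sense of definition \ref{defsg} ($F^p \sg\Kc:=\sg F^p\Kc$) --- this is what the paper's phrase ``(the filtration induced by) the canonical filtration'' refers to, and it is the filtration whose spectral sequence is the weight spectral sequence. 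Concretely,
$$F_{can}^{p-k}C^k=\bigoplus_{i+j=k}F_{can}^{p-k}C^j(X^{(i)})=\ker\left(\delta'':C^{k-p}(X^{(p)})\to C^{k-p+1}(X^{(p)})\right)\oplus\bigoplus_{i>p}C^{k-i}(X^{(i)}),$$
since for $j=k-i$ one has $j<k-p\iff i>p$, $j=k-p\iff i=p$, and $j>k-p\iff i<p$. With your reading the stated equality is simply false, and precisely the two cases you flag would fail: for $p>0$ the space $\Dec(\hat F)^{p-k}C^k$ contains all of $\bigoplus_{i>p}C^{k-i}(X^{(i)})$ (the d\'ecalage never discards the summands of cubical degree greater than $p$), so no dimension or degree comparison can force it to vanish; and for $p=0$ the constraint $\delta''x_0=0$ is strictly weaker than $\delta x=0$ (any $x$ concentrated in cubical degrees $\gee 1$ satisfies the former), so it cannot be equivalent to $x\in\ker\delta_k$ for the total differential. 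One can also see that your reading must be wrong on global grounds: if the lemma held for the total-complex canonical filtration, Deligne's d\'ecalage theorem would force the spectral sequence of $(C^*,\hat F)$ to be concentrated on a single diagonal from level two on, contradicting the fact that the weight spectral sequence of a singular variety does not degenerate in general.

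Once the right-hand side is read correctly, the degree-chasing you set up is the entire proof, uniformly in $p$ and with no hard case: writing $x=\sum_{i\gee p}x_i\in\hat F^{p}C^k$ with $x_i\in C^{k-i}(X^{(i)})$, since $\delta'$ raises cubical degree by one and $\delta''$ preserves it, the only component of $\delta x$ in cubical degree $\lee p$ is $\delta''x_p$; hence the condition $\delta x\in\hat F^{p+1}C^{k+1}$ is exactly $\delta''x_p=0$, and
$$\Dec(\hat F)^{p-k}C^k=\ker\left(\delta'':C^{k-p}(X^{(p)})\to C^{k-p+1}(X^{(p)})\right)\oplus\bigoplus_{i>p}C^{k-i}(X^{(i)})=F_{can}^{p-k}C^k,$$
where for $p<0$ both descriptions reduce to $C^k$ because $C^*(X^{(p)})=0$ and every $i\gee 0$ satisfies $i>p$. (The paper states this lemma without a written proof, as a direct verification of exactly this kind, so the fix above is the intended argument.)
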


Consequently, the spectral sequence $\widehat E_r$ induced by the filtered complex $(C^*, \hat F)$ associated to the cubical hyperresolution $X_{\bullet}$ of $X$ is isomorphic to the reindexed cohomological weight spectral sequence of $X$ (from level two) :

\begin{prop} \label{hcspec}
For $r \gee 2$
$$\widetilde E_{r}^{a,b} = E_{r}^{a,b} (C^*, \hat F)$$
\end{prop}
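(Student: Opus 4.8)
The plan is to combine three ingredients already assembled in the excerpt. First, by the acyclicity of $\WC$ for cubical hyperresolutions (invoked via \cite{GNA}, proof of Theorem 2.1.5), the cohomological weight spectral sequence $E_r$ of a compact variety $X$ is isomorphic, from level one onwards, to the spectral sequence associated to the total complex $C^*$ equipped with the canonical filtration $F_{can}$ induced from the double complex $C^{i,j}$ of the cubical hyperresolution $X_\bullet$. Second, Lemma \ref{Delshift} identifies this canonical filtration on $C^*$ with the Deligne shift of the naive filtration $\hat F$, namely $\Dec(\hat F)^{p-k} C^k = F_{can}^{p-k} C^k$. Third, the reindexing $\widetilde E_r^{p,q} = E_{r-1}^{-q,p+2q}$ connects the cohomological weight spectral sequence to its reindexed form. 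So the strategy is to chain these identifications together and track the indices carefully.

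The key step is the translation of Lemma \ref{Delshift} into a relation between spectral sequences. The general principle, due to Deligne (\cite{Del2}, 1.3), is that the spectral sequence of a filtered complex with respect to $\Dec(\hat F)$ agrees, from level two onwards, with the spectral sequence with respect to $\hat F$ after a standard reindexing; more precisely, $E_r(\Dec \hat F) \cong E_{r+1}(\hat F)$ for $r \gee 1$, with the corresponding shift in the bidegrees. First I would recall or cite this property of the Deligne shift, then apply it to our complex $(C^*,\hat F)$: by Lemma \ref{Delshift} the $\Dec(\hat F)$-spectral sequence coincides with the $F_{can}$-spectral sequence, which in turn (by the first ingredient) is isomorphic to the cohomological weight spectral sequence $E_r$ of $X$. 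Combining this with the $\widehat E_r = E_r(C^*,\hat F)$ naming and the reindexing $\widetilde E_r^{p,q} = E_{r-1}^{-q,p+2q}$ should produce exactly the claimed equality $\widetilde E_r^{a,b} = E_r^{a,b}(C^*,\hat F)$ for $r \gee 2$.

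The main obstacle will be purely bookkeeping: getting the index shifts to line up. There are two competing reindexings in play — the Deligne shift $E_r(\Dec \hat F) \cong E_{r+1}(\hat F)$, which accounts for why the identity holds from $r \gee 2$ rather than $r \gee 1$, and the reindexing $\widetilde E_r^{p,q} = E_{r-1}^{-q,p+2q}$ defining $\widetilde E$. I would verify that the decrement in $r$ from the Deligne shift exactly matches the $r \mapsto r-1$ built into the definition of $\widetilde E$, so that the two effects compose to the clean statement at the matching bidegree $(a,b)$. The safest way to do this is to carry a symbolic pair $(p,q)$ through each isomorphism and confirm that the source bidegree $E_{r-1}^{-q,p+2q}$ of $\widetilde E_r$ lands on $E_r^{p,q}(C^*,\hat F)$ under the composite, using that the Deligne shift sends bidegree $(p,q)$ to $(p+q, -q)$ (or the sign convention appropriate to the second-quadrant, cohomological indexing used here). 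Once the indices are confirmed to match, the proposition follows formally, with no further geometric input beyond what Lemma \ref{Delshift} and the acyclicity already provide.
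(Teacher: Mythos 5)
Your proposal follows essentially the same route as the paper's own argument: the acyclicity of $\WC$ for cubical hyperresolutions identifies the cohomological weight spectral sequence with that of $(C^*,F_{can})$ from level one, Lemma \ref{Delshift} identifies $F_{can}$ with $\Dec(\hat F)$, and Deligne's d\'ecalage property combined with the reindexing $\widetilde E_r^{p,q}=E_{r-1}^{-q,p+2q}$ yields the claim for $r\gee 2$, exactly as in the paper. The only caveat is that the correct bidegree shift in the d\'ecalage isomorphism is $E_{r}^{p,q}(\Dec \hat F)\cong E_{r+1}^{2p+q,-p}(\hat F)$ (preserving total degree), not $(p,q)\mapsto(p+q,-q)$ as you tentatively wrote; with this convention the bookkeeping you planned does close up, since $\widetilde E_r^{a,b}=E_{r-1}^{-b,a+2b}(\Dec\hat F)=E_r^{2(-b)+(a+2b),\,b}(\hat F)=E_r^{a,b}(C^*,\hat F)$.
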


\begin{rem} If $U$ is a non-compact real algebraic variety, take a compactification $X$ of $U$ and consider the complement $X \setminus U$ of $U$ in $X$. Then, using the additivity property of the weight complex and the proposition \ref{hcspec} above, we can compute the cohomological weight spectral sequence of $X$ from the spectral sequences induced by cubical hyperresolutions of $X$ and $X \setminus U$.
\end{rem}

\section{The dual geometric filtration} \label{dgf}

In \cite{MP}, McCrory and Parusi\'nski built a functor $\Gc_{\bullet} C_* \ : \ \Schc \lra \mathcal{C}$ (where $\mathcal{C}$ is the category of bounded filtered chain complexes, see \cite{MP}) representing the weight complex functor $\mathcal W C_*$ defined in $H o \, \mathcal{C}$ (up to filtered quasi-isomorphisms only). Dualizing the geometric filtration $\Gc_{\bullet}$, we obtain a functor representing the cohomological weight complex $\WC$ at the cochain level. Therefore, our cohomological weight complex functor
$$\WC \ : \ \Schc \lra H o \, \mathfrak{C}$$
can be factorized into a functor
$$\dis \Gc^{\bullet} C^* \ : \ \Schc \lra \mathfrak{C}$$
through the canonical localization $\mathfrak{C} \lra H o \, \mathfrak{C}$. 

Analogously we can dualize the Nash filtration $\Nc_{\bullet}$ (see section 3 of \cite{MP}) that extends the geometric filtration on the wider category $\chi_{\mathcal{AS}}$ of $\mathcal{AS}$-sets (see \cite{K88} and \cite{KP}), and then extend the functor $\Gc^{\bullet} C^*$ to the category $\chi_{\mathcal{AS}}$, showing in particular that the semialgebraic cochain complex equipped with the dualized geometric filtration is functorial with respect to semialgebraic morphisms with $\AS$ graph.

We remark also that the cohomological weight spectral sequence $E_r$ is dual to the homological weight spectral sequence for $r \gee 0$ and deduce that the cohomological weight filtration can be obtained by dualizing the homological weight filtration.

\subsection{Definition} \label{defdual}

Let $X$ be a real algebraic variety. We dualize the geometric filtration on the semialgebraic chain complex of (the set of real points of) $X$ in the following way. We set
$$\Gc^pC^q(X) := \left\{\varphi \in C^q(X) ~ | ~ \varphi \equiv 0 \mathrm{\ on \ } \Gc_{p-1} C_q(X) \right\}$$
i.e. $\Gc^pC^q(X)$ consists of the linear forms defined on $C_q(X)$ and vanishing on $\Gc_{p-1} C_q(X)$.

We get a decreasing filtration on $C^*(X)$ :
$$ C^k(X) = \Gc^{-k} C^k(X) \supset \Gc^{-k+1} C^k(X) \supset \cdots \supset \Gc^{0} C^k(X) \supset \Gc^{1} C^k(X) = 0,$$
that we call the {\it dual geometric filtration} or the {\it cohomological geometric filtration} of $X$. We show in proposition \ref{coincid} that the induced spectral sequence $E_r$ (well-defined for $r =0,1, \dots$ and functorial in $X$) coincides with the spectral sequence of the weight complex from $r \gee 1$.

The cohomological geometric filtration satisfies the properties of short exact sequences of additivity and acyclicity (lemma \ref{SEFD}), dual to the ones of Theorem {\bf (2.7)} and {\bf (3.6)} of \cite{MP}. These properties are stronger than the additivity and acyclicity properties $(Ad)$ and $(Ac)$ of $\WC$, which can be recovered by the snake lemma. 

\begin{rem} \label{grad}
We have $\dis \Gc^pC^q(X) \cong \left( \frac{C_q(X)}{\Gc_{p-1} C_q(X)} \right)^{\vee}$, since we can factorize a linear form on $C_q(X)$ which kernel contains $\Gc_{p-1} C_q(X)$ through $C_q(X) \rightarrow \frac{C_q(X)}{\Gc_{p-1} C_q(X)}$. Furthermore, if we consider the restriction of morphisms of $\Gc^pC^q(X)$ to $\Gc_{p} C_q(X)$, the quotients on chains and cochains are related by :
$$\frac{\Gc^{p} C^q(X)}{\Gc^{p+1} C^q(X)} = \left( \frac{\Gc_{p} C_q(X)}{\Gc_{p-1} C_q(X)} \right)^{\vee}.$$

Since the isomorphisms are compatible with the differentials of the complexes which are dual to one another, they induce a duality between the spectral sequence associated to the cochain complex $\Gc^{\bullet} C^*(X)$ and the spectral sequence associated to the chain complex $\Gc_{\bullet} C_*(X)$~:
$$E_r^{p,q} = (E^r_{p,q})^{\vee}$$
(from $r \gee 0$).
\\

Notice that the construction of the dual geometric filtration can be generalized to any filtered chain complex of $\mathcal{C}$, providing the same duality on the induced spectral sequences : if $(K_*, F_{\bullet}) \in \mathcal{C}$, we define the \emph{dual filtration} of $F_{\bullet}$ by
$$F_{\vee}^p K_{\vee}^{q} := \left\{\varphi \in (K_q)^{\vee} ~ | ~ \varphi \equiv 0 \mathrm{\ on \ } F_{p-1} K_q(X) \right\}$$
and then 
$$E_r^{p,q}(K_{\vee},F_{\vee}) = (E^r_{p,q}(K, F))^{\vee}$$
from $r \gee 0$.  
\end{rem}

\begin{lem}\label{SEFD}
For any closed inclusion $Y \stackrel{i}{\hookrightarrow} X$ and any $p,q \in \mathbb{Z}$, the sequence
$$0 \longrightarrow \Gc^pC^q(X \setminus Y) \stackrel{}{\longrightarrow} \Gc^pC^q(X)  \stackrel{}{\longrightarrow} \Gc^pC^q(Y) \longrightarrow 0$$
is exact.  

For an acyclic square $\begin{array}{ccc}
\widetilde Y & \stackrel{j}{\hookrightarrow} & \widetilde X\\
\downarrow_{\pi} & & \downarrow_{\pi}\\
Y & \stackrel{i}{\hookrightarrow} & X
\end{array}$ and any $p,q \in \mathbb{Z}$, the sequence
$$0 \longrightarrow \Gc^pC^q(X) \stackrel{}{\longrightarrow} \Gc^pC^q(\widetilde X) \oplus \Gc^pC^q(Y) \stackrel{}{\longrightarrow} \Gc^pC^q(\widetilde Y) \longrightarrow 0$$
is exact.
\end{lem}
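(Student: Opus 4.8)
The plan is to derive both sequences by dualizing, level by level, the homological short exact sequences of McCrory and Parusi\'nski: the additivity sequence of Theorem {\bf (2.7)} and the acyclicity sequence of Theorem {\bf (3.6)} of \cite{MP}. Recall that, for a closed inclusion $Y \hra X$ with open complement $X \setminus Y$, the former asserts that
$$0 \lra \Gc_p C_q(Y) \lra \Gc_p C_q(X) \lra \Gc_p C_q(X \setminus Y) \lra 0$$
is exact for every $p,q$, while for an acyclic square the latter gives
$$0 \lra \Gc_p C_q(\Ytilde) \lra \Gc_p C_q(\Xtilde) \oplus \Gc_p C_q(Y) \lra \Gc_p C_q(X) \lra 0 ,$$
again exact for every $p,q$; in both cases the maps strictly respect the geometric filtration, so one has a short exact sequence at each filtration level, in particular at level $p-1$.

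First I would pass from these filtration levels to the quotients $C_q / \Gc_{p-1} C_q$ that enter the identification of remark \ref{grad}. Treating the additivity case, I place the level-$(p-1)$ sequence and the full (unfiltered) sequence $0 \to C_q(Y) \to C_q(X) \to C_q(X \setminus Y) \to 0$ as the two rows of a commutative diagram whose vertical arrows are the inclusions $\Gc_{p-1} C_q \hra C_q$. These vertical maps being injective, the snake lemma kills all three kernels and yields the exact cokernel sequence
$$0 \lra \frac{C_q(Y)}{\Gc_{p-1} C_q(Y)} \lra \frac{C_q(X)}{\Gc_{p-1} C_q(X)} \lra \frac{C_q(X \setminus Y)}{\Gc_{p-1} C_q(X \setminus Y)} \lra 0 .$$
The identical argument applied to the acyclicity sequence produces the analogous short exact sequence of quotients for the acyclic square.

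Then I would dualize. As we work over the field $\ZZ_2$, the functor $(\cdot)^{\vee} = \Hom_{\ZZ_2}(\cdot, \ZZ_2)$ is exact and reverses arrows, so it sends each of the two quotient sequences above to a short exact sequence. Substituting the isomorphism $\Gc^p C^q(\cdot) \cong \left( C_q(\cdot) / \Gc_{p-1} C_q(\cdot) \right)^{\vee}$ of remark \ref{grad} then gives precisely the two sequences of the statement, with the order of the terms reversed as it should be.

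The main obstacle, such as it is, lies in the two reductions just used rather than in any computation. The snake-lemma step is legitimate only because the columns are inclusions, which forces all connecting homomorphisms to vanish; and the exactness of $(\cdot)^{\vee}$ --- specifically the surjectivity of the dual of an injection --- depends essentially on the $\ZZ_2$-coefficients, since over a general ground ring dualization would only be left exact. It remains to identify the maps produced by dualization with the natural restriction and pullback maps defining $\Gc^{\bullet} C^{\bullet}$ on the relevant morphisms, which follows by functoriality of $(\cdot)^{\vee}$ from the corresponding maps in \cite{MP}.
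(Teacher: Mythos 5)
Your proof is correct and takes essentially the same route as the paper's: the paper's own (one-line) proof likewise combines the identification $\Gc^pC^q(\cdot) \cong \left( C_q(\cdot)/\Gc_{p-1} C_q(\cdot) \right)^{\vee}$ of remark \ref{grad} with the homological short exact sequences of additivity and acyclicity of Theorem 2.7 of \cite{MP}. Your snake-lemma passage to the quotient sequences and the appeal to exactness of $(\cdot)^{\vee}$ over the field $\ZZ_2$ simply spell out the details the paper leaves implicit.
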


\begin{proof}
We use the natural isomorphisms $\dis \Gc^pC^q(X) \cong \left( \frac{C_q(X)}{\Gc_{p-1} C_q(X)} \right)^{\vee}$ (remark \ref{grad}) and the short exact sequences of additivity and acyclicity of the homological geometric filtration (Theorem 2.7 of \cite{MP}).
\end{proof}

\subsection{Realization of the cohomological weight complex}

We now use the previous lemma \ref{SEFD} to show that the cohomological geometric filtration realizes the cohomological weight filtration :

\begin{prop} \label{coincid}
The dual geometric filtration $\Gc^{\bullet}C^* \ : \ \Schc \lra \Cc$ induces the functor $\WC \ : \ \Schc \lra \Hoc$.
\end{prop}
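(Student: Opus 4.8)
The plan is to invoke the uniqueness clause of the extension theorem \ref{extension}. Since $\WC$ is characterized there as the unique extension (up to filtered quasi-isomorphism) of the initial functor $F_{can}C^* \ : \ \V \lra H o \, \mathfrak{C}$ to $\Schc$ satisfying the acyclicity $(Ac)$ and additivity $(Ad)$ conditions, it suffices to prove that the localization of $\Gc^{\bullet}C^*$ through $\mathfrak{C} \lra H o \, \mathfrak{C}$ is such an extension. Concretely, I would check three points: (i) that $\Gc^{\bullet}C^*$ restricted to $\V$ is filtered quasi-isomorphic to $F_{can}C^*$; (ii) that $\Gc^{\bullet}C^*$ satisfies $(Ad)$; and (iii) that it satisfies $(Ac)$. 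The conclusion then follows formally from the uniqueness up to a unique filtered quasi-isomorphism.

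For (ii) and (iii), the key input is lemma \ref{SEFD}. For additivity, the first short exact sequence of lemma \ref{SEFD} exhibits $\Gc^{\bullet}C^*(X \setminus Y)$ as the filtered kernel of the degreewise surjective restriction $i^* \ : \ \Gc^{\bullet}C^*(X) \lra \Gc^{\bullet}C^*(Y)$; hence the simple filtered complex of $\Gc^{\bullet}C^*(Y) \stackrel{i^*}{\longleftarrow} \Gc^{\bullet}C^*(X)$ (the mapping cone of $i^*$) is filtered quasi-isomorphic to $\Gc^{\bullet}C^*(X \setminus Y)$, which is precisely $(Ad)$. For acyclicity, the second short exact sequence of lemma \ref{SEFD} shows that the simple filtered complex of the acyclic-square diagram is exact at each filtration level $p$; passing to the associated graded and running the induced long exact sequence then gives $E_1(\sg \Kc) = 0$, which is exactly $(Ac)$. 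These are the routine snake-lemma and cone arguments already announced after lemma \ref{SEFD}.

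The genuine point is (i). Here I would use the computation of McCrory and Parusi\'nski (\cite{MP}) that, for a compact nonsingular $M$, the homological geometric filtration $\Gc_{\bullet}C_*(M)$ is filtered quasi-isomorphic to $C_*(M)$ equipped with the (homological) canonical filtration, together with the dual-filtration construction of remark \ref{grad}, which yields $E_r^{p,q}(K_{\vee}, F_{\vee}) = (E^r_{p,q}(K,F))^{\vee}$ for any filtered chain complex of $\mathcal{C}$. Because we work over the field $\ZZ_2$, dualizing this filtered quasi-isomorphism of chain complexes produces a filtered quasi-isomorphism of the dual filtered cochain complexes, so $\Gc^{\bullet}C^*(M)$ is filtered quasi-isomorphic to the dual of the homological canonical filtration. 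It then remains to identify this dual with the cohomological canonical filtration $F_{can}$ of definition \ref{canco}, which is a direct unraveling of definitions; this matches the initial functor $F_{can}C^*$ on $\V$.

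I expect the main obstacle to be precisely step (i): one must pin down the conventions relating the geometric and canonical filtrations on compact nonsingular varieties and verify that dualization interchanges the homological and cohomological canonical filtrations with exactly the indexing of definition \ref{canco}, since any shift in indexing would break the comparison with the initial functor. By contrast, the additivity and acyclicity verifications (ii) and (iii) are immediate consequences of the short exact sequences of lemma \ref{SEFD}, and the final identification with $\WC$ is then a formal appeal to uniqueness in theorem \ref{extension}.
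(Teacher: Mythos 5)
Your overall strategy coincides with the paper's: check $(Ad)$ and $(Ac)$ for $\Gc^{\bullet}C^*$ via the short exact sequences of lemma \ref{SEFD}, check the extension property on $\V$ by dualizing McCrory and Parusi\'nski's filtered quasi-isomorphism between $\Gc_{\bullet}C_*$ and the homological canonical filtration, and conclude by the uniqueness clause of theorem \ref{extension}. Your points (ii) and (iii) are correct and are exactly what the paper does.

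The gap is in step (i), at the very point you flag as the main obstacle: the dual of the homological canonical filtration is \emph{not} the cohomological canonical filtration of definition \ref{canco}, and no unraveling of definitions (nor any reindexing) makes them equal. Computing with the dual filtration of remark \ref{grad} one finds
$$\left( F^{can} \right)_{\vee}^{p} C^q(X) = \left\{ \begin{array}{ll}
0 & \mathrm{\ if \ } q > -p+1, \\
\im \, \delta_{q-1} & \mathrm{\ if \ } q = -p+1,\\
C^q(X) & \mathrm{\ if \ } q < -p+1,
\end{array} \right.$$
because a linear form on $C_q(X)$ vanishing on $\ker \partial_q$ factors through $\partial_q$ and hence lies in $\im \, \delta_{q-1}$; whereas $F_{can}^{p} C^q(X)$ equals $0$, $\ker \delta_q$, $C^q(X)$ in degrees $q > -p$, $q = -p$, $q < -p$ respectively. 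The two filtrations genuinely differ in the two boundary degrees ($\im\,\delta_{q-1}$ versus $0$, and $C^q$ versus $\ker \delta_q$), so the "identification" your argument relies on would fail if carried out literally. What is true, and what the paper's proof supplies at this point, is that the natural inclusion $F_{can}^{\bullet} C^*(X) \subset \left(F^{can}\right)_{\vee}^{\bullet} C^*(X)$ is a morphism of filtered complexes which induces an isomorphism on $E_1$: both spectral sequences have $E_1^{p,q} = H^{p+q}(X)$ concentrated on the line $p+q = -p$, by equality (\ref{speccan}) on one side and by duality with the homological computation on the other. With this extra comparison inserted, your zigzag $\Gc^{\bullet}C^*(X) \simeq \left(F^{can}\right)_{\vee}^{\bullet} C^*(X) \supset F_{can}^{\bullet}C^*(X)$ of filtered quasi-isomorphisms closes the argument; without it, step (i) is incomplete.
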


\begin{proof}
The functor $\Gc^{\bullet} C^*$, composed with the canonical localization $\mathfrak{C} \rightarrow H o \, \mathfrak{C}$ verifies the properties $(Ac)$ and $(Ad)$ of Theorem \ref{extension} by lemma \ref{SEFD}. If it also verifies the extension property, by uniqueness of the cohomological weight complex, the two functors $\Gc^{\bullet} C^*,~\WC : \Schc \lra H o \, \mathfrak{C}$ will be isomorphic.
\\

Let $X$ be a nonsingular projective real algebraic variety. We show that $\Gc^{\bullet} C^*(X)$ is filtered quasi-isomorphic to $F^{\bullet}_{can} C^*(X)$. According to \cite{MP} Theorem 2.8., the complexes $\Gc_{\bullet} C_*(X)$ and $F^{can}_{\bullet} C_*(X)$ are filtered quasi-isomorphic (through the inclusion morphism). By remark \ref{grad}, we deduce that, on the cohomological spectral sequences level,
$$E_1\left(\Gc^{\bullet} C^*\right) = \left(E^1(\Gc_{\bullet} C_*)\right)^{\vee} \cong \left(E^1(F^{can}_{\bullet} C_*(X))\right)^{\vee} = E_1\left(\left(F^{can} \right)_{\vee}^{\bullet} C^*(X)\right),$$
where the dual canonical filtration $\left(F^{can} \right)_{\vee}^{\bullet} C^*(X)$ is given by 
$$\left( F^{can} \right)_{\vee}^{p} C^q(X) = \left\{\varphi \in C^q(X) ~ | ~ \varphi \equiv 0 \mathrm{\ on \ } F^{can}_{p-1} C_q(X) \right\} = \left\{ \begin{array}{ll}
0 & \mathrm{\ if \ } q > - (p-1), \\
\im \ \delta_{q-1} & \mathrm{\ if \ } q =-(p-1),\\
C^q(X) & \mathrm{\ if \ } q < - (p-1),
\end{array} \right.$$
(notice that a linear form on $C_q(X)$ which vanishes on $\ker \partial_q$ can be factorised into a linear form on $C^{q-1}(X)$ through $\partial_q$ and then belongs to $\im \ \delta_{q-1}$).\\

We observe that there is an inclusion of the canonical filtration given by
$$F^p_{can} C^q(X) = \left\{ \begin{array}{ll}
0 & \mathrm{\ if \ }  q > - p, \\
\ker \delta_{q} & \mathrm{\ if \ } q = - p,\\
C^q(X) & \mathrm{\ if \ } q < - p,
\end{array} \right.$$
in the filtration $\left(F^{can} \right)_{\vee}^{\bullet} C^*(X)$, that induces a quasi-isomorphism of $\mathfrak{C}$. Indeed
$$E_1^{p,q}(\left(F^{can} \right)_{\vee}^{\bullet} C^*(X)) = (E^1_{p,q}(F^{can}_{\bullet} C_*(X)))^{\vee} = \left\{ \begin{array}{ll}
H^{p+q}(X) & \mathrm{\ if \ } p+q=-p,\\
0 & \mathrm{\ otherwise. \ }
\end{array} \right. = E_1^{p,q}\left(F^{\bullet}_{can} C^*(X)\right)$$ 
(see paragraph 1.1 of \cite{MP} and equality (\ref{speccan})).
\\

As a consequence, $E_1\left(\Gc^{\bullet} C^*\right) = E_1\left(\left(F^{can} \right)_{\vee}^{\bullet} C^*(X)\right) = E_1 \left(F^{\bullet}_{can} C^*(X)\right)$ and $\Gc^{\bullet} C^*(X)$ and $F^{\bullet}_{can} C^*(X)$ are isomorphic in $H o \, \mathfrak{C}$.
\end{proof}

\begin{cor} \label{wfaredual}
We have isomorphisms
$$\frac{\Wc^{p} H^q(X)}{\Wc^{p+1} H^q(X)} = \left( \frac{\Wc_{p} H_q(X)}{\Wc_{p-1} H_q(X)} \right)^{\vee},$$ 
and the weight filtrations on Borel-Moore homology in \cite{MP} and cohomology with compact supports in corollary \ref{fcoho} are related by
$$\Wc^p H^q(X) = \left\{\varphi \in H^q(X) ~|~ \varphi \equiv 0 \mathrm{\ on \ } \Wc_{p-1} H_q(X) \right\}$$
\end{cor}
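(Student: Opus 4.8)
The plan is to derive both statements from the duality of the associated spectral sequences recorded in remark \ref{grad}, combined with the finite-dimensionality of Lemma \ref{dimfinie}.

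First I would establish the isomorphism of graded pieces. By remark \ref{grad} the spectral sequences of $\Gc^{\bullet} C^*(X)$ and $\Gc_{\bullet} C_*(X)$ are dual at every finite page, $E_r^{p,q} = (E^r_{p,q})^{\vee}$ for $r \gee 0$. Since the terms of $E_1$, hence of every later page, are finite-dimensional (Lemma \ref{dimfinie}), each spectral sequence stabilizes at a finite page; choosing $r$ large enough for both sides gives $E_{\infty}^{p,q} = E_r^{p,q} = (E^r_{p,q})^{\vee} = (E^{\infty}_{p,q})^{\vee}$. Convergence of the cohomological spectral sequence reads $E_{\infty}^{p,q} = \Wc^p H^{p+q}(X)/\Wc^{p+1} H^{p+q}(X)$, and dually (as in \cite{MP}) the homological one reads $E^{\infty}_{p,q} = \Wc_p H_{p+q}(X)/\Wc_{p-1} H_{p+q}(X)$. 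Reindexing by the total degree (taking the second index to be $q-p$) then yields the first isomorphism of the corollary.

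For the second equality, I would write $A^p := \{ \varphi \in H^q(X) ~|~ \varphi \equiv 0 \text{ on } \Wc_{p-1} H_q(X) \}$ for the annihilator of $\Wc_{p-1} H_q(X)$ inside $H^q(X) = (H_q(X))^{\vee}$ (proposition \ref{homdual}); the goal is $\Wc^p H^q(X) = A^p$. The crucial step is the inclusion $\Wc^p H^q(X) \subseteq A^p$, for which I would use the chain-level realization of proposition \ref{coincid}: a class of $\Wc^p H^q(X)$ admits a representative cocycle $\varphi \in \Gc^p C^q(X)$, while a class of $\Wc_{p-1} H_q(X)$ admits a representative cycle $c \in \Gc_{p-1} C_q(X)$, and their pairing equals $\varphi(c)$, which vanishes since by definition $\Gc^p C^q(X)$ consists of the forms vanishing on $\Gc_{p-1} C_q(X)$.

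It remains to promote this inclusion to an equality by counting dimensions. The first isomorphism gives $\dim (\Wc^p H^q/\Wc^{p+1} H^q) = \dim (\Wc_p H_q/\Wc_{p-1} H_q)$, while the elementary behaviour of annihilators in a finite-dimensional space gives $\dim(A^p/A^{p+1}) = \dim (\Wc_p H_q/\Wc_{p-1} H_q)$ as well. Thus the two bounded decreasing filtrations $\Wc^{\bullet} H^q(X)$ and $A^{\bullet}$ have graded pieces of equal dimension in every degree; summing over the finitely many indices at least $p$ yields $\dim \Wc^p H^q(X) = \dim A^p$, and together with $\Wc^p H^q(X) \subseteq A^p$ this forces equality. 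I expect the main obstacle to be exactly this last passage: the duality of spectral sequences only controls the successive quotients, so the chain-level inclusion coming from proposition \ref{coincid} is genuinely needed to pin down the filtration itself rather than merely its associated graded.
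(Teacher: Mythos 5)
Your proposal is correct and takes essentially the same route as the paper: the graded isomorphism comes from the duality of spectral sequences $E_{\infty} = \left( E^{\infty} \right)^{\vee}$ of remark \ref{grad} (via the geometric realizations of the weight complexes by the dual geometric and geometric filtrations), and the annihilator description of $\Wc^p H^q(X)$ is then deduced from this duality. Your chain-level inclusion plus dimension count simply makes explicit the step the paper compresses into ``we deduce''.
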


\begin{proof}
We use the facts that the homological and cohomological geometric filtrations realize respectively the homological and cohomological weight spectral sequences, and that these spectral sequences are then dual to one another (remark \ref{grad}) : the first assertion is just the isomorphism
$$E_{\infty} = \left( E^{\infty} \right)^{\vee}.$$
We deduce that
$$\Wc^p H^q(X) = \left( \frac{H_q(X)}{\Wc_{p-1} H_q(X)} \right)^{\vee},$$
i.e.
$$\Wc^p H^q(X) = \left\{\varphi \in H^q(X)~|~\varphi \equiv 0 \mathrm{\ on \ } \Wc_{p-1} H_q(X) \right\}.$$

\end{proof}

In a similar way, we can define a filtration $\Nc^{\bullet}$ on the semialgebraic cochain complex $C^*(X)$ of an $\AS$-set, dual to the Nash filtration $\Nc_{\bullet}$ of \cite{MP}, section 3. This dual filtration defines a functor
$$\Nc^{\bullet} C^* \ : \ \chi_{\mathcal{AS}} \lra \mathfrak{C}$$
from the category of $\AS$-sets, which extends $\Gc^{\bullet}C^* \ : \ \Schc \lra \mathfrak{C}$. In particular we obtain :

\begin{prop} \label{ASdual}
The dual geometric filtration and its spectral sequence are functorial with respect to semialgebraic morphisms with $\AS$ graph.
\end{prop}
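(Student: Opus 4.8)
The plan is to obtain the statement as a formal consequence of two facts already available: the functoriality of the Nash filtration $\Nc_\bullet$ proved by McCrory and Parusi\'nski, and the purely algebraic nature of the dualization procedure of remark \ref{grad}. Recall from \cite{MP}, section 3, that $\Nc_\bullet$ is functorial with respect to semialgebraic morphisms with $\AS$ graph: such a morphism $f : X \lra Y$ induces a \emph{filtered} chain map $f_* : (C_*(X), \Nc_\bullet) \lra (C_*(Y), \Nc_\bullet)$, that is $f_*\left( \Nc_p C_q(X) \right) \subseteq \Nc_p C_q(Y)$ for all $p, q$. This is the only geometric input; the remaining steps are routine.

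First I would verify that the transpose of a filtered chain map is a filtered cochain map for the dual filtrations. Set $f^* := (f_*)^\vee : C^*(Y) \lra C^*(X)$. If $\varphi \in \Nc^p C^q(Y)$, i.e.\ $\varphi$ vanishes on $\Nc_{p-1} C_q(Y)$, then for every $c \in \Nc_{p-1} C_q(X)$ we have $(f^*\varphi)(c) = \varphi(f_* c) = 0$, since $f_* c \in \Nc_{p-1} C_q(Y)$ by the inclusion above. Hence $f^*\varphi \in \Nc^p C^q(X)$, so $f^*$ respects the dual Nash filtration $\Nc^\bullet$. Transposing the identities $(g \circ f)_* = g_* \circ f_*$ and $(\id_X)_* = \id$ gives $(g \circ f)^* = f^* \circ g^*$ and $(\id_X)^* = \id$, so that $\Nc^\bullet C^* : \chi_{\mathcal{AS}} \lra \mathfrak{C}$ is a well-defined contravariant functor --- precisely the functor announced just before the statement.

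Next I would restrict to the dual geometric filtration. As $\Nc_\bullet$ restricts to $\Gc_\bullet$ on $\Schc$, its dual $\Nc^\bullet$ restricts to $\Gc^\bullet$; applying the computation above to a semialgebraic morphism with $\AS$ graph between real algebraic varieties shows that $f^*$ is a filtered cochain map $\left( \Gc^\bullet C^*(Y), \Gc^\bullet \right) \lra \left( \Gc^\bullet C^*(X), \Gc^\bullet \right)$. Finally, a filtered cochain map induces a morphism of the associated spectral sequences compatible with every $E_r$; by remark \ref{grad} this morphism is simply the transpose of the morphism $E^r(f_*)$ of homological spectral sequences, which yields the functoriality of the spectral sequence.

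The main obstacle is not the dualization, which is formal, but the appeal to \cite{MP}: one has to be sure that the pushforward $f_*$ on semialgebraic chains with closed supports is defined and filtered for morphisms with $\AS$ graph, which is exactly the content of the Nash filtration construction. Once this is granted, the passage to cochains and to the spectral sequence is immediate, and the proposition follows.
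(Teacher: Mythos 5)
Your proposal is correct and follows exactly the paper's route: the paper likewise obtains the proposition by dualizing the Nash filtration $\Nc_{\bullet}$ of \cite{MP}, section 3, to get a functor $\Nc^{\bullet} C^* : \chi_{\mathcal{AS}} \lra \mathfrak{C}$ extending $\Gc^{\bullet}C^*$, leaving the transposition of filtered chain maps and the induced spectral sequence morphisms (which you spell out) as formal consequences of the construction in remark \ref{grad}.
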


\section{Weight filtrations and products} \label{wfpsect}

In this section, we define the cross product of semialgebraic chains with closed supports (definition \ref{defprod}). We first check that it is well-defined and give its behaviour under the boundary operator (lemmas \ref{prodwelldef} and \ref{bordprod}) and with respect to the geometric filtration (proposition \ref{prodchain}). 

If $X$ and $Y$ are two real algebraic varieties, the product of chains then induces a well-defined morphism $u$ from the tensor product of the geometric filtrations of $X$ and $Y$ to the geometric filtration of the cross product $X \times Y$ (theorem \ref{prodhom}). Using the naturality property of the extension criterion of Guill\'en and Navarro-Aznar in \cite{GNA}, we show that it is a filtered quasi-isomorphism. Consequently, the tensor product of the weight complexes is isomorphic in $H o \, \mathcal{C}$ to the weight complex of the product. Looking at the induced relations between the weight spectral sequences terms, this implies in particular the multiplicativity of the virtual Poincar\'e polynomial (without the use of the weak factorization theorem) and the fact that the K\"unneth isomorphism is filtered with respect to the weight filtration. In paragraph \ref{pcwcsect}, dualizing the quasi-isomorphism $u$, we show the cohomological counterparts of these results. 

In subsection \ref{cupsect}, we use these cohomological counterparts to define a cup product on the dual geometric filtration of a real algebraic variety $X$ at the localized cochain level. It induces the usual cup product on the cohomology of its real points, showing that the latter is filtered with respect to the cohomological weight filtration. We then use this cup product in $H o \, \mathfrak{C}$ to also define a cap product on the cochain and chain level (subsection \ref{capsect}). Finally, in paragraph \ref{wfpdmsect}, we give obstructions for a compact real algebraic variety to satisfy Poincar\'e duality, relating to its weight filtrations.
 
\subsection{Product of semialgebraic chains} \label{pscsect}

Let $X$ and $Y$ be two real algebraic varieties. We define a product operation between the chains of $X$ and the chains of $Y$, checking in lemma \ref{prodwelldef} that this operation is well-defined.

\begin{definition} \label{defprod}
For any chains $c=[A] \in C_q(X)$ and $c'=[B] \in C_{q'}(Y)$, we define $$c \times c' := [A \times B] \in C_{q+q'}(X \times Y).$$
\end{definition}

\begin{lem} \label{prodwelldef}
Let $A$ and $A'$, respectively $B$ and $B'$, be two closed semialgebraic subsets of (the set of real points) of $X$, respectively $Y$, such that $[A] = [A']$ in $C_n(X)$ and $[B] = [B']$ in $C_m(Y)$ for some nonnegative integers $n$ and $m$.
Then
$$[A \times B] = [A' \times B']$$
in $C_{n+m}(X \times Y)$.
\end{lem}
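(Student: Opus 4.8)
The plan is to work directly with the standard description of $\ZZ_2$-coefficient semialgebraic chains as classes of closed semialgebraic sets, in which, for $[A],[A'] \in C_k$ represented by closed semialgebraic sets of dimension $\lee k$, one has $[A] = [A']$ precisely when the symmetric difference $A \triangle A'$ has dimension strictly less than $k$ (equivalently, $A$ and $A'$ coincide outside a subset of dimension $< k$). Thus the hypotheses translate into $\dim(A \triangle A') < n$ and $\dim(B \triangle B') < m$, together with $\dim A, \dim A' \lee n$ and $\dim B, \dim B' \lee m$; the goal is then to show
$$\dim\big( (A \times B) \triangle (A' \times B') \big) < n + m,$$
after first noting that $A \times B$ and $A' \times B'$ are closed semialgebraic subsets of $X \times Y$, hence legitimate representatives of chains in $C_{n+m}(X \times Y)$.

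First I would establish the purely set-theoretic inclusion
$$(A \times B) \triangle (A' \times B') \subseteq \big( (A \triangle A') \times (B \cup B') \big) \cup \big( (A \cup A') \times (B \triangle B') \big),$$
by a short case analysis on a point $(x,y)$ of the left-hand side: if $(x,y) \in A \times B$ but $(x,y) \notin A' \times B'$, then either $x \notin A'$, which forces $x \in A \triangle A'$ and $y \in B \cup B'$, or $y \notin B'$, which forces $y \in B \triangle B'$ and $x \in A \cup A'$; the complementary case, with the roles of primed and unprimed sets exchanged, is identical.

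Then I would invoke the additivity of dimension for products of semialgebraic sets, $\dim(S \times T) = \dim S + \dim T$, together with the fact that a finite union of semialgebraic sets has dimension the maximum of the dimensions (both standard, e.g. from \cite{BCR}), to bound the two pieces on the right. The first gives $\dim\big( (A \triangle A') \times (B \cup B') \big) \lee (n-1) + m = n+m-1$, and the second $\dim\big( (A \cup A') \times (B \triangle B') \big) \lee n + (m-1) = n+m-1$. Hence the symmetric difference has dimension at most $n+m-1 < n+m$, which is exactly the condition for $[A \times B] = [A' \times B']$ in $C_{n+m}(X \times Y)$.

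The argument is essentially formal once the equivalence relation on representatives is made explicit, and I do not expect any genuine difficulty. The only place demanding care is the bookkeeping of the symmetric-difference inclusion: a careless expansion of $(A \times B) \triangle (A' \times B')$ could blur which factor absorbs the drop in dimension, so I would keep the two cases ($x \notin A'$ versus $y \notin B'$) cleanly separated to ensure each term of the covering union loses exactly one dimension in the correct factor.
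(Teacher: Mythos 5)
Your proof is correct, and there is nothing in the paper to compare it against: the authors state Lemma \ref{prodwelldef} without proof, so your write-up simply supplies the verification they leave to the reader. Your argument is the standard one and is sound in every step: the translation of chain equality into the dimension condition $\dim(A \triangle A') < n$, $\dim(B \triangle B') < m$ (this is exactly the description of semialgebraic chains with closed supports in the Appendix of \cite{MP}, on which the paper relies), the set-theoretic inclusion
$$(A \times B) \triangle (A' \times B') \subseteq \bigl( (A \triangle A') \times (B \cup B') \bigr) \cup \bigl( (A \cup A') \times (B \triangle B') \bigr),$$
and the dimension count via $\dim(S \times T) = \dim S + \dim T$ and $\dim(S \cup T) = \max(\dim S, \dim T)$, giving $\dim\bigl((A \times B) \triangle (A' \times B')\bigr) \le n+m-1 < n+m$, together with the preliminary observation that $A \times B$ and $A' \times B'$ are closed semialgebraic of dimension at most $n+m$ and hence legitimate representatives in $C_{n+m}(X \times Y)$.
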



We then verify that the product of chains is distributive over the sum :

\begin{lem}
If $c_1,~c_2$ are two chains of $X$ and $c'$ is a chain of $Y$,
$$(c_1 + c_2) \times c' = c_1 \times c' + c_2 \times c',$$
and if $c$ is a chain of $X$ and $c'_1,~c'_2$ are two chains of $Y$,
$$c \times (c'_1 + c'_2) = c \times c'_1 + c \times c'_2.$$
\end{lem}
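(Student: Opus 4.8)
The plan is to work directly with representatives, using that over $\ZZ_2$ every chain is the class $[A]$ of a single closed semialgebraic set and that addition in $C_*(X)$ is realized by symmetric difference. Writing $c_1 = [A_1]$, $c_2 = [A_2]$ and $c' = [B]$, the sum $c_1 + c_2$ is represented by (the closure of) the symmetric difference $A_1 \triangle A_2$, two closed semialgebraic sets of dimension $\leq k$ defining the same $k$-chain as soon as they agree outside a subset of dimension $< k$. So by definition \ref{defprod}, proving the first identity amounts to comparing the closed semialgebraic sets $(A_1 \triangle A_2) \times B$ and $(A_1 \times B) \triangle (A_2 \times B)$.

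First I would record the elementary set-theoretic identity
$$(A_1 \triangle A_2) \times B = (A_1 \times B) \triangle (A_2 \times B),$$
which holds pointwise: a point $(x,y)$ lies in the right-hand side exactly when $y \in B$ and $x$ belongs to precisely one of $A_1$ and $A_2$, that is, exactly when $x \in A_1 \triangle A_2$ and $y \in B$. This identity is valid without any dimension hypothesis.

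It then remains to descend this equality of sets to an equality of chains. Here I would invoke the well-definedness of the product (lemma \ref{prodwelldef}) together with the fact that passing from a $k$-dimensional semialgebraic set to its closure, or discarding a lower-dimensional overlap of $A_1$ and $A_2$, only alters a representative by a subset of strictly smaller dimension and therefore does not change its chain class. Combining this with the set identity yields $[(A_1 \triangle A_2) \times B] = [(A_1 \times B) \triangle (A_2 \times B)]$, i.e. $(c_1 + c_2) \times c' = c_1 \times c' + c_2 \times c'$. The second distributivity relation is obtained by the identical argument applied to the second factor, starting from $A \times (B_1 \triangle B_2) = (A \times B_1) \triangle (A \times B_2)$.

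The only genuine point requiring care — and hence the main obstacle — is this last step: the set identity holds on the nose, but the sum of chains is defined only up to lower-dimensional sets (because of closures and of the overlap $A_1 \cap A_2$), so one must check that taking products is compatible with these identifications. This compatibility is precisely what lemma \ref{prodwelldef} provides, the frontier of a $k$-dimensional semialgebraic set having dimension $< k$.
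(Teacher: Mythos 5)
Your proof is correct: over $\ZZ_2$ the sum of semialgebraic chains is represented by the closure of the symmetric difference of representatives, and the pointwise identity $(A_1 \triangle A_2) \times B = (A_1 \times B) \triangle (A_2 \times B)$, combined with the fact that discrepancies coming from closures and from the overlap $A_1 \cap A_2$ have dimension strictly less than that of the chains (so that lemma \ref{prodwelldef} applies), yields both distributivity relations. The paper itself states this lemma without proof, treating it as immediate from the definition of semialgebraic chains, and your argument is precisely the standard verification being left to the reader, so it matches the intended (omitted) proof.
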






The next lemma describes the behaviour of the semialgebraic boundary operator with respect to the product on semialgebraic chains we defined above.

\begin{lem} \label{bordprod} The boundary of the product of two chains $c \in C_q(X)$ and $c' \in C_{q'}(Y)$ verifies, in $C_{q+q'-1}(X \times Y)$,
$$\partial (c \times c') = \partial c \times c' + c \times \partial c'.$$ 
\end{lem}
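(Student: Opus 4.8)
The identity $\partial(c \times c') = \partial c \times c' + c \times \partial c'$ is the Leibniz rule for the semialgebraic boundary operator, and the natural strategy is to reduce to the case of basic chains $c = [A]$ and $c' = [B]$ given by closed semialgebraic subsets, since both sides are bilinear in $(c,c')$ by the distributivity lemma just proved. So first I would invoke bilinearity to assume $c = [A] \in C_q(X)$ and $c' = [B] \in C_{q'}(Y)$ with $A, B$ closed semialgebraic of the appropriate dimensions, so that $A \times B$ is a closed semialgebraic subset of $X_\RR \times Y_\RR$ of dimension $q + q'$ representing $c \times c'$.

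The core geometric content is the description of the boundary of a product in terms of the boundaries of the factors. Recall that for a semialgebraic chain represented by a closed semialgebraic set $A$, the boundary $\partial[A]$ is represented (mod $2$) by the closure of the set of points of the $(q-1)$-skeleton where $A$ fails to be a local $\ZZ_2$-manifold of dimension $q$, i.e. the points near which $A$ has an odd number of local $q$-dimensional sheets meeting the $(q-1)$-stratum. The key step is then the local computation: a point $(x,y) \in \overline{A \times B}$ lies in the $(q+q'-1)$-boundary stratum of $A \times B$ exactly when either $x$ is a boundary point of $A$ and $y$ an interior point of $B$, or $x$ an interior point of $A$ and $y$ a boundary point of $B$; the mixed contribution where both are boundary points has dimension $q+q'-2$ and hence does not appear, while the parity of local sheets of the product is the product of the parities of the factors. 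This yields, at the level of the underlying sets (mod $2$),
$$\partial(A \times B) = (\partial A \times B) \cup (A \times \partial B)$$
away from a set of dimension $\leq q+q'-2$, which is precisely the chain identity $\partial[A \times B] = \partial[A] \times [B] + [A] \times \partial[B]$.

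I would organize the argument by first reducing to an affine local model: embedding $X_\RR$ and $Y_\RR$ locally in affine spaces and working with the semialgebraic stratifications of $A$ and $B$, so that the product stratification of $A \times B$ is the product of the two. On product strata, the local structure of $A \times B$ as a $\ZZ_2$-manifold-with-appropriate-multiplicity factors as a product, and the boundary computation is the standard Leibniz rule for products of stratified sets carried out one stratum at a time, with careful attention that the boundary operator $\partial_*$ on $C_*$ is defined via closed supports (as in the Appendix of \cite{MP}). The main obstacle I expect is precisely this local verification that the $(q+q'-1)$-dimensional part of $\partial(A\times B)$ splits as the two product terms with the mixed term dropping out by a dimension count, and that the multiplicities combine correctly modulo $2$; this requires the good behaviour of semialgebraic boundary under products, which is essentially the fact that $A \times B$ is a local $\ZZ_2$-manifold of dimension $q+q'$ near $(x,y)$ iff $A$ is near $x$ and $B$ near $y$. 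Once this local statement is established, patching over the finitely many product strata and passing back from sets to chains via lemma \ref{prodwelldef} completes the proof.
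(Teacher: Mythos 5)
Your outline gets the dimension count right (the mixed term supported on $\partial A \times \partial B$ has dimension $\lee q+q'-2$ and hence vanishes at the chain level), but it rests on a mischaracterization of the boundary operator, and the one step that carries the whole content of the lemma is deferred rather than proved. In the McCrory--Parusi\'nski framework used here, $\partial[A]$ is defined through the mod $2$ Euler characteristic of links: $\partial A = \{x \in A~|~\chi(\lk(x,A)) \equiv 1 \bmod 2\}$. This is \emph{not} the locus where $A$ fails to be a local $\ZZ_2$-manifold, contrary to your ``i.e.'' and to your closing claim that the key fact needed is ``$A \times B$ is a local $\ZZ_2$-manifold near $(x,y)$ iff $A$ is near $x$ and $B$ near $y$''. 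For instance, if $A \subset \RR^3$ is the union of the two coordinate planes $\{y=0\} \cup \{z=0\}$ (four half-planes along the $x$-axis), no point of the axis is a manifold point, yet the link of such a point is a graph with two vertices and four edges, of Euler characteristic $-2 \equiv 0 \bmod 2$, so $\partial[A]=0$; your manifold-failure criterion would instead give $\partial[A] = [\mathrm{axis}] \neq 0$. So the manifold criterion computes a different operator than $\partial$, and establishing your ``key fact'' would not prove the lemma. (Your sheet-parity formulation at generic points of the $(q-1)$-skeleton is the correct one --- in the example the sheet count is $4$, even --- but it is not equivalent to manifold failure, and you never verify that sheet parities multiply under products.)

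What remains after this correction --- the local verification that the link, or the sheet parity, of $A \times B$ at $(a,b)$ is governed by the links of $A$ at $a$ and of $B$ at $b$ --- is precisely what the paper proves, and it is the heart of the argument rather than a routine patching step. The paper's lemma \ref{lkprod} shows, via Hardt's theorem applied to the semialgebraic distance functions $\|x-a\|^2$ and $\|y-b\\|^2$, that $\lk((a,b), A\times B)$ is semialgebraically homeomorphic to the join of $\lk(a,A)$ and $\lk(b,B)$; consequently $\chi(\lk((a,b),A\times B)) = \chi(\lk(a,A)) + \chi(\lk(b,B)) - \chi(\lk(a,A))\,\chi(\lk(b,B))$, which is odd exactly when one of the two factors is odd. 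This gives the set-theoretic equality $\partial(A \times B) = (\partial A \times B) \cup (A \times \partial B)$ at \emph{every} point, with no genericity or stratification argument, and the chain identity follows because the overlap $\partial A \times \partial B$ has dimension $\lee q+q'-2$. To repair your proof you would have to supply this link (or sheet-counting) computation; as written, you label it ``the standard Leibniz rule for products of stratified sets'' and postpone it, so the essential step is missing, and the justification you do offer for it is the incorrect manifold criterion.
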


\begin{proof} Let $A \subset X$ and $B \subset Y$ be closed semialgebraic sets representing respectively $c$ and $c'$. We show that
$$\partial(A \times B) = \partial A \times B \cup A \times \partial B.$$


First recall that, for a semialgebraic set $S$, $\partial S = \{x \in S~|~\chi(\lk(x, S)) \equiv 1 \mod 2\}$, where $\lk(x, S) := S(x,\epsilon) \cap S$ for $\epsilon$ small enough. In the lemma \ref{lkprod} below, we prove that, for a fixed point $(a,b) \in A \times B$, the link $\lk((a,b), A \times B)$ of $(a,b)$ in $A \times B$ is semialgebraically homeomorphic to the set
$$\{ \lambda(a, \beta) + (1-\lambda) (\alpha, b)~|~\lambda \in [0,1],~\beta \in \lk(b,B),~\alpha \in \lk(a,A) \}.$$

Applying the Euler characteric $\chi$, we obtain
$$\chi(\lk((a,b), A \times B)) = \chi(\lk(a,A)) + \chi(\lk(b,B)) - \chi(\lk(a,A)) \chi(\lk(b,B))$$
and we then deduce that
$$\partial(A \times B) = \partial A \times B \cup A \times \partial B.$$





\end{proof}

\begin{lem} \label{lkprod} Let $(a,b) \in A \times B$, then the link $\lk((a,b), A \times B)$ of $(a,b)$ in $A \times B$ is semialgebraically homeomorphic to the set
$$\{ \lambda(a, \beta) + (1-\lambda) (\alpha, b)~|~\lambda \in [0,1],~\beta \in \lk(b,B),~\alpha \in \lk(a,A) \}.$$
\end{lem}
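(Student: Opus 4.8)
The plan is to identify the link of the product with the \emph{join} of the two links, and then to recognize the set displayed in the statement as the standard linear realization of that join. The underlying geometric fact is that a product of cones is the cone over the join of their links; since each of $A$ and $B$ is locally a cone over its link, their product is locally a cone over the join, whose link is exactly the object we want to describe.

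First I would fix local semialgebraic embeddings $A \subset \RR^n$ near $a$ and $B \subset \RR^m$ near $b$, so that $A \times B \subset \RR^{n+m}$ near $(a,b)$, and choose a common radius $\epsilon > 0$ small enough for the semialgebraic local conic structure theorem (\cite{BCR}, Theorem 9.3.6) to apply simultaneously to $A$ at $a$ and to $B$ at $b$. This provides semialgebraic homeomorphisms between $A \cap \overline{B}(a,\epsilon)$ and the cone over $\lk(a,A)$, and between $B \cap \overline{B}(b,\epsilon)$ and the cone over $\lk(b,B)$, each compatible with the distance function to the respective center.

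Second I would analyze $\lk((a,b),A\times B) = (A\times B) \cap S((a,b),\epsilon)$, consisting of the pairs $(x,y)$ with $x \in A$, $y \in B$ and $|x-a|^2 + |y-b|^2 = \epsilon^2$. Writing $|x-a| = \epsilon \cos\theta$ and $|y-b| = \epsilon\sin\theta$ with $\theta \in [0,\pi/2]$, the conic structure lets me retract $x$ radially onto $\lk(a,A)$ and $y$ radially onto $\lk(b,B)$, while $\theta$, equivalently a parameter $\lambda \in [0,1]$, records how the radius $\epsilon$ is split between the two factors. The resulting semialgebraic map $(x,y) \mapsto (\alpha,\beta,\lambda)$ is a homeomorphism onto the abstract join $\lk(a,A) * \lk(b,B)$, the faces $\lambda = 0$ and $\lambda = 1$ collapsing $\lk(b,B)$ and $\lk(a,A)$ respectively. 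This identifies $\lk((a,b),A\times B)$ semialgebraically with $\lk(a,A) * \lk(b,B)$. Finally I would check that the displayed set is exactly the linear embedding of this join: the point $\lambda(a,\beta) + (1-\lambda)(\alpha,b) = ((1-\lambda)\alpha + \lambda a,\ \lambda\beta + (1-\lambda)b)$ lies on the segment from $(\alpha,b)\in \lk(a,A)\times\{b\}$ to $(a,\beta)\in\{a\}\times\lk(b,B)$; since $|\alpha-a| = |\beta-b| = \epsilon$, its first coordinate is at distance $(1-\lambda)\epsilon$ from $a$ and its second at distance $\lambda\epsilon$ from $b$, so $\lambda$, $\alpha$ and $\beta$ are recovered uniquely for $\lambda \in (0,1)$ by measuring distances and projecting radially, while the endpoints collapse precisely as in the join. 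Hence $(\alpha,\beta,\lambda) \mapsto \lambda(a,\beta)+(1-\lambda)(\alpha,b)$ is a semialgebraic homeomorphism from $\lk(a,A)*\lk(b,B)$ onto the displayed set, and composing with the identification above yields the claim.

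The main obstacle I anticipate is the second step: the sphere $S((a,b),\epsilon)$ meets $A \times B$ in the "elliptic" locus $|x-a|^2+|y-b|^2 = \epsilon^2$ rather than in a product of spheres, so the homeomorphism must genuinely interpolate between the two radial directions and one must arrange that the cone-structure homeomorphisms for $A$ and $B$ combine into a single semialgebraic map whose cone parameters agree along this locus. Particular care is needed at the two degenerate faces $\lambda = 0,1$, where one factor's link must collapse to a point exactly as dictated by the join construction.
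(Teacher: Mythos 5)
Your proposal is correct and takes essentially the same approach as the paper: the paper applies Hardt's theorem to trivialize the distance functions $\|x-a\|^2$ and $\|y-b\|^2$ on each factor (the same data as your conic-structure retractions onto the links) and then writes directly the interpolation map $f(x,y)=\frac{\|x-a\|^2}{\epsilon^2}(\widetilde{h}_1(x),b)+\frac{\|y-b\|^2}{\epsilon^2}(a,\widetilde{h}_2(y))$, which is precisely your composite homeomorphism onto the linear realization of the join $\mathrm{lk}(a,A)*\mathrm{lk}(b,B)$, with your parameter $\lambda$ corresponding to $\|y-b\|^2/\epsilon^2$. The only difference is expository: you pass through the abstract join as an intermediate object, while the paper writes the formula in one step.
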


\begin{proof} Suppose $X \subset \mathbb{R}^n$ and $Y \subset \mathbb{R}^m$ for some $n,~m \gee 0$. 

Consider the continuous semialgebraic functions $p_1 : \mathbb{R}^n \times \{b\} \rightarrow \mathbb{R}~;~(x,b) \mapsto \|x - a\|^2$ and $p_2 : \{a\} \times \mathbb{R}^m \rightarrow \mathbb{R}~;~(a,y) \mapsto \|y - b\|^2$. By Hardt's theorem, for $\epsilon$ small enough, there exist semialgebraic trivializations over $]0, \epsilon^2]$
$$\overline{B}_n(a, \epsilon) \times \{b\} \rightarrow ]0, \epsilon^2] \times S_n(a, \epsilon) \times \{b\}~;~ (x,b) \mapsto (\|x - a\|^2, \widetilde{h}_1(x), b)$$
and
$$\{a\} \times \overline{B}_m(b, \epsilon) \rightarrow ]0, \epsilon^2] \times \{a\} \times S_m(b, \epsilon)~;~ (a,y) \mapsto (\|y - b\|^2,a, \widetilde{h}_2(y))$$
of $p_1$ and $p_2$ respectively, compatible with $A \times \{b\}$ and $\{a\} \times B$ respectively.


We then define
$$f : S_{m+n}((a,b), \epsilon) \rightarrow C~;~(x,y) \mapsto \frac{\|x - a \|^2}{\epsilon^2}(\widetilde{h}_1(x),b) + \frac{\|y - b \|^2}{\epsilon^2}(a, \widetilde{h}_2(y)),$$ 
where $C := \{ \lambda(a, \beta) + (1-\lambda) (\alpha, b)~|~\lambda \in [0,1],~\beta \in  S_m(b,\epsilon),~\alpha \in S_n(a,\epsilon)\}$, which induces the desired semialgebraic homeomorphism.

\end{proof}

\subsection{Product and geometric filtration} \label{pgfsect}

Now we study the behaviour of the product of chains with respect to the geometric filtration :

\begin{prop} \label{prodchain}
$ \\$
(1) If $c \in \Gc_pC_q(X)$ and $c' \in \Gc_{p'}C_{q'}(Y)$, then
$$c \times c' \in \Gc_{p+p'}C_{q+q'}(X \times Y)$$
(2) If $c \in C_q(X)$, $c' \in C_{q'}(Y)$ and $c \times c' \in \Gc_{s}C_{q+q'}(X \times Y)$, then there exist $p,p' \in \mathbb{Z}$ with $p+p'=s$, such that
$$c \in \Gc_{p}C_q(X) \mathrm{\ and\ } c' \in \Gc_{p'}C_{q'}(Y)$$
\end{prop}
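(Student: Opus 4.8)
\emph{Plan.} Both assertions describe how the geometric filtration $\Gc_\bullet C_*$ of \cite{MP} interacts with the external product of Definition \ref{defprod}. Since this product is well defined (Lemma \ref{prodwelldef}) and obeys the Leibniz rule $\partial(c\times c')=\partial c\times c'+c\times\partial c'$ (Lemma \ref{bordprod}), it is already a morphism of chain complexes and only the filtration index has to be tracked. The computational core, which I would isolate first, is that the \emph{canonical} filtration $F^{can}_\bullet$ on chains (the homological analogue of Definition \ref{canco}) is multiplicative under the external product, i.e. $F^{can}_pC_q(X)\times F^{can}_{p'}C_{q'}(Y)\subset F^{can}_{p+p'}C_{q+q'}(X\times Y)$. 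This follows by a direct inspection of the three cases in the definition of $F^{can}_\bullet$; the only delicate one is $q=p$, $q'=p'$, where $c$ and $c'$ are cycles and the Leibniz rule gives $\partial(c\times c')=0$, so that $c\times c'$ is again a cycle of the top degree, hence in $F^{can}_{p+p'}$.

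\emph{Part (1).} The filtration $\Gc_\bullet$ is built, via resolution of singularities, from the canonical filtrations on the nonsingular pieces of a (cubical hyper)resolution, pushed forward to $X$. I would exploit that the product of two resolutions of $\Xbar$ and $\Ybar$ is a resolution of $\Xbar\times\Ybar$, and that the external product of two pushforwards $\pi_*\alpha$ and $\pi'_*\alpha'$ is the pushforward $(\pi\times\pi')_*(\alpha\times\alpha')$ of the external product. Choosing closed semialgebraic representatives $A$ of $c$ and $B$ of $c'$ and resolutions realizing the levels $p$ and $p'$, the multiplicativity of $F^{can}_\bullet$ established above shows that $\alpha\times\alpha'$ sits in canonical level $p+p'$ on the (nonsingular) product piece, whence $c\times c'\in\Gc_{p+p'}C_{q+q'}(X\times Y)$. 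The compatibility of the external product with the simple-complex differential of the resolution is precisely the Leibniz rule of Lemma \ref{bordprod}. One may also organize the reduction to the compact nonsingular case by induction on $\dim X+\dim Y$, using the short exact sequences of additivity and acyclicity of $\Gc_\bullet$ (the homological analogue of Lemma \ref{SEFD}, i.e. Theorem 2.7 of \cite{MP}), which remain exact after crossing with a fixed factor.

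\emph{Part (2).} Let $a$ and $b$ be the least indices with $c\in\Gc_aC_q(X)$ and $c'\in\Gc_bC_{q'}(Y)$. By part (1), $c\times c'\in\Gc_{a+b}C_{q+q'}(X\times Y)$, so it suffices to prove that this level is optimal, namely that the minimal level of $c\times c'$ is exactly $a+b$. Granting this, the hypothesis $c\times c'\in\Gc_sC_{q+q'}(X\times Y)$ forces $s\gee a+b$; setting $p:=a$ and $p':=s-a\gee b$ then yields $c\in\Gc_pC_q(X)$, $c'\in\Gc_{p'}C_{q'}(Y)$ (the filtration being increasing) and $p+p'=s$, as required. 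The degenerate cases $c=0$ or $c'=0$ are immediate, the zero chain lying in every level.

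\emph{The main obstacle} is the lower bound implicit in Part (2). As $\Gc_\bullet$ is increasing, Part (1) only delivers upper bounds for the level of a product, so the reverse inequality $\mathrm{lev}(c\times c')\gee\mathrm{lev}(c)+\mathrm{lev}(c')$ has to be produced geometrically. I expect to obtain it by recovering each factor from the product: restricting to a generic slice $X\times\{y_0\}$ (respectively $\{x_0\}\times Y$), at a point where the other factor is represented transversally, is non-increasing for the filtration on the product yet restores the full level of the retained factor. Phrased dually, this is the injectivity of the external product on the associated graded spaces of $\Gc_\bullet$, a K\"unneth statement for its graded pieces, and it is here that the real work lies.
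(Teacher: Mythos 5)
Your proposal has genuine gaps in both parts, and the central one is that it never engages with what actually defines the geometric filtration. In part (1) you reduce to multiplicativity of the \emph{canonical} filtration on nonsingular pieces, but this rests on a mischaracterization: $\Gc_{\bullet}$ is not ``the canonical filtrations of a (cubical hyper)resolution pushed forward to $X$''. On a compact nonsingular $M$ the two filtrations are only related by a filtered \emph{quasi-isomorphism} (Theorem 2.8 of \cite{MP}), given by the levelwise inclusion $\Gc_pC_q(M)\subset F^{can}_pC_q(M)$; they are far from equal, and the inclusion goes in the wrong direction for your argument. Indeed $F^{can}_pC_q$ is almost contentless chain by chain (it equals $C_q$ for $p>-q$ and $\ker\partial_q$ for $p=-q$), so your multiplicativity statement for $F^{can}_{\bullet}$ says little more than ``a product of cycles is a cycle'', and knowing that $\alpha\times\alpha'$ has canonical level $p+p'$ on a resolution cannot yield the stronger conclusion $c\times c'\in\Gc_{p+p'}C_{q+q'}(X\times Y)$. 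The level of a chain in $\Gc_{\bullet}$ is governed, inductively on dimension, by the position of $\partial(\pi^{-1}c)$ inside a normal crossing divisor of an adapted resolution (Theorem 2.1 and Proposition 2.6 of \cite{MP}), or equivalently by the existence of a generically Nash-constructible function $\varphi:X\to 2^{q+p}\ZZ$ representing $c$. The paper proves (1) by the latter description in one stroke: if $\varphi$ and $\psi$ represent $c$ and $c'$, then $\pi_X^*(\varphi)\cdot\pi_Y^*(\psi)$ is Nash-constructible, takes values in $2^{q+q'+p+p'}\ZZ$, and represents $c\times c'$. Nothing in your part (1) plays the role of either characterization.

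For part (2), your reduction to the equality $\mathrm{lev}(c\times c')=\mathrm{lev}(c)+\mathrm{lev}(c')$ is sound, but the lower bound is the entire content of the statement and you leave it as a heuristic (``it is here that the real work lies''). The slicing idea as stated does not work: intersecting with $X\times\{y_0\}$ is not an operation that $\Gc_{\bullet}$ is known to respect (the filtration is functorial for proper maps with $\AS$ graph and for restriction to open subsets, not for intersections with subvarieties), and the natural attempt---restrict a Nash-constructible witness $\eta:X\times Y\to 2^{q+q'+s}\ZZ$ of $c\times c'$ to a generic slice---only yields the trivial bound $c\in\Gc_{s+q'}C_q(X)$, because the divisibility of $\eta(\cdot,y_0)$ does not improve upon restriction unless $\eta$ already has product form, which is what one is trying to prove. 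The paper's argument is quite different: after reducing to $X$, $Y$ compact, it proceeds by induction on $\dim (X\times Y)$, choosing resolutions $\pi$, $\pi'$ adapted to $c$, $c'$ (lemma \ref{aresol}); then $\pi\times\pi'$ is adapted to $c\times c'$, with $\supp\,\partial(\widetilde{c}\times\widetilde{c'})$ in the normal crossing divisor $\widetilde D=(D\times\Ytilde)\cup(\Xtilde\times D')$, and the key step is the separation lemma \ref{lemdiv}: from $\partial(\widetilde{c}\times\widetilde{c'})\in\Gc_sC_{q+q'-1}(\widetilde D)$ one extracts $\partial\widetilde{c}\times\widetilde{c'}\in\Gc_sC_{q+q'-1}(D\times\Ytilde)$ and $\widetilde{c}\times\partial\widetilde{c'}\in\Gc_sC_{q+q'-1}(\Xtilde\times D')$, by multiplying a Nash-constructible witness by the characteristic functions of the Zariski closures of the two supports. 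The inductive hypothesis applied to these lower-dimensional products then produces the splitting $s=p+p'$, and the inductive characterization of $\Gc_{\bullet}$ transfers it back to $c$ and $c'$. Your proposal contains no substitute for this separation step, which is where the proposition is actually proved.
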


\begin{rem}
Because the filtration $\Gc_{\bullet}$ is increasing, the proposition shows in particular that the index $p+p'$ of the product $c \times c'$ in the filtration is minimal if and only if the indices $p$ of $c$ and $p'$ of $c'$ are minimal. In other words, if $c \in \Gc_pC_q(X) \setminus \Gc_{p-1}C_q(X)$ and $c' \in \Gc_{p'}C_{q'}(Y) \setminus \Gc_{p'-1}C_{q'}(Y)$, then
$$c \times c' \in \Gc_{p+p'}C_{q+q'}(X \times Y) \setminus \Gc_{p+p'-1}C_{q+q'}(X \times Y),$$
and if $c \in \Gc_pC_q(X)$ and $c' \in \Gc_{p'}C_{q'}(Y)$ with $c \times c' \notin \Gc_{p+p'-1}C_{q+q'}(X \times Y)$ then
$$c \notin \Gc_{p-1}C_q(X) \mbox{   and   } c' \notin \Gc_{p'-1}C_{q'}(Y).$$
\end{rem}

For the proof of proposition \ref{prodchain}, we use the notion of adapted resolutions (see \cite{MP}, section 2). Adapted resolutions allow us to work with chains lying in a nonsingular ambient space, with boundary belonging to a normal crossing divisor.

\begin{lem} \label{aresol}
Suppose $X$ is compact and consider a chain $c = [A]$ of $X$. We can assume that the dimension of $A$ is maximal, equal to the dimension of $X$, by considering the Zariski closure $\overline A^Z$ of $A$, since the filtration is only depending on the support of $c$ (Theorem {\bf 2.1} (1) of \cite {MP}) : we have
$$c \in \Gc_pC_k(X) \Longleftrightarrow c \in \Gc_pC_k(\overline A^Z).$$
With this assumption, there exists a resolution of singularities $\pi$ : $\widetilde X \longrightarrow X$ of $X$ such that $\supp ( \partial (\pi^{-1} c)) \subset D$, where $D$ is a normal crossing divisor of $\widetilde X$.
\end{lem}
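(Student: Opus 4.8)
=== PROOF PROPOSAL FOR LEMMA \ref{aresol} ===

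\begin{preuve}[Proposal for Lemma \ref{aresol}]
The plan is to reduce to the situation of a chain of full dimension sitting inside a resolved ambient variety, and then invoke resolution of singularities of a pair. First I would justify the preliminary reduction to $\overline A^Z$: since the geometric filtration $\Gc_\bullet$ depends only on the support of the chain (this is Theorem {\bf 2.1}(1) of \cite{MP}), and $\supp(c) = \supp([A]) = A$ (up to lower-dimensional strata, as a $\ZZ_2$-chain), the membership $c \in \Gc_pC_k(X)$ is governed entirely by the set $A$. Replacing $X$ by the Zariski closure $\overline A^Z$ does not change this support, so the stated equivalence
$$c \in \Gc_pC_k(X) \Longleftrightarrow c \in \Gc_pC_k(\overline A^Z)$$
holds and we may assume $\dim A = \dim X$, i.e. $A$ is a chain of maximal dimension in its ambient variety. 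Here I would be careful to note that $\overline A^Z$ is a closed subvariety of $X$ and that the compactness of $X$ passes to it, so the hypotheses are preserved.

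Next I would produce the resolution. Since $X$ (now equal to $\overline A^Z$) is a compact real algebraic variety, Hironaka's resolution of singularities gives a proper birational regular morphism $\pi : \widetilde X \lra X$ with $\widetilde X$ nonsingular and compact, which is an isomorphism over the smooth locus of $X$. The point is to arrange that the boundary of the pulled-back chain lies in a normal crossing divisor. The chain $\pi^{-1}c$ is the strict transform (the closure in $\widetilde X$ of the preimage of the top-dimensional smooth part of $A$), and its support is $\widetilde X$ itself since $\pi$ is birational and $A$ has full dimension. The boundary $\partial(\pi^{-1}c)$ is then supported on a proper algebraic subset $Z \subset \widetilde X$ (the locus where the link has odd Euler characteristic, a semialgebraic set of smaller dimension contained in an algebraic set). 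Applying resolution of singularities once more to the pair $(\widetilde X, Z)$ — or more precisely, further blowing up $\widetilde X$ along smooth centers to make $Z$ into a simple normal crossing divisor $D$ — yields the desired $\widetilde X$ with $\supp(\partial(\pi^{-1}c)) \subset D$ and $D$ a normal crossing divisor.

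The main obstacle, and the step deserving the most care, is the last one: guaranteeing that the support of $\partial(\pi^{-1}c)$ can be engulfed in a normal crossing divisor while keeping $\widetilde X$ nonsingular. The concern is that $\supp(\partial(\pi^{-1}c))$ is a priori only a semialgebraic set, not obviously algebraic; however, one can take its Zariski closure, which is a genuine algebraic subvariety of $\widetilde X$ of codimension at least one, and resolve that. By the embedded resolution version of Hironaka's theorem, there is a composition of blow-ups with smooth centers turning this closure into a normal crossing divisor, and since blow-ups with smooth centers preserve nonsingularity of the ambient space, the final $\widetilde X$ is still a nonsingular compact variety. I would remark that this is precisely the construction of \emph{adapted resolutions} in \cite{MP}, section 2, so the existence statement can be cited from there; the role of this lemma is to record that such an adapted resolution exists for any given chain $c$ after the harmless reduction to the full-dimensional case.
\end{preuve}
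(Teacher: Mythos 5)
Your proposal is correct and follows essentially the same route as the paper: resolve $X = \overline A^Z$ once to make the ambient space nonsingular, then apply embedded (Hironaka) resolution to the codimension-one subvariety given by (the Zariski closure of) $\supp(\partial(\pi^{-1}c))$ so that it lies in a normal crossing divisor --- the paper's proof is exactly this two-step composition of resolutions. One inessential slip: $\supp(\pi^{-1}c)$ need not be all of $\widetilde X$, since full dimension of $A$ in $\overline A^Z$ does not mean $A = \overline A^Z$ (think of a closed disc in the sphere, whose Zariski closure is the whole sphere); but nothing in your argument uses this, because the boundary of a $k$-chain is automatically supported in dimension at most $k-1$, which is all that is needed to run the embedded resolution step.
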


Such a resolution is called a \emph{resolution of $X$ adapted to the chain $c$}. Notice that the pullback $\pi^{-1}c$ of $c$ is defined because $\pi$ is a resolution of singularities (the pullback operation on chains is more generally defined for any acyclic square of real algebraic varieties : see \cite{MP} Appendix).

\begin{proof}
First consider a resolution $\pi'$ : $X' \longrightarrow X$ of $X$ to make the ambient space nonsingular, then consider a resolution $\widetilde{X} \lra X'$ of the embedded variety $\supp \left( \partial (\pi'^{-1} c) \right)$ (which is a hypersurface of $X'$), so that it is in a normal crossing divisor.
\end{proof}

\begin{proof} {(\it of Proposition \ref{prodchain})}
The first point can be proved using the description of geometric filtration using Nash-constructible fonctions (\cite{MPACF}, \cite{MPACFj}) : see section 3 of \cite{MP}. There exist generically Nash-constructible functions $\varphi \ : \ X \longrightarrow 2^{q+p}\ZZ \mbox{  and  } \psi \ : \ Y \longrightarrow 2^{q'+p'}\ZZ$
such that
$$c = \left[\left\{ x \in X ~|~ \varphi(x) \notin 2^{q+p+1}\ZZ \right\}\right] \mbox{  and  } c' = \left[\left\{ y \in Y ~ | ~ \psi(y) \notin 2^{q'+p'+1}\ZZ \right\}\right].$$
Denote by $\pi_X$ : $X \times Y \longrightarrow X$ and $\pi_Y$ : $X \times Y \longrightarrow Y$ the projections. We define the function
$$\eta := \pi_X^*(\varphi) \cdot \pi_Y^*(\psi) \ :\ \begin{array}{rcl}
X \times Y & \longrightarrow & 2^{q+q'+p+p'}\ZZ \\
(x,y) & \longmapsto & \varphi(x)\cdot \psi(y)
\end{array}$$
which is Nash-constructible because the pullback of a Nash-constructible function and the product of Nash-constructible functions are Nash-constructible. Since,
$$c \times c' = \left[\left\{ (x,y) \in X \times Y ~|~ \eta(x,y) \notin 2^{q+q+p+p'+1}\ZZ \right\}\right],$$
the chain $c \times c'$ is in $\Gc_{p+p'}C_{q+q'}(X \times Y)$. 
\\

To prove the second point of the proposition, we use the very definition of the geometric filtration (see \cite{MP} Theorem 2.1 and Proposition 2.6). We first assume $X$ and $Y$ to be compact and we proceed by induction on the dimension of $X \times Y$. Suppose $c \times c' \in \Gc_{s}C_{q+q'}(X \times Y)$.

Let $\pi$ : $\widetilde X \longrightarrow X$ be an adapted resolution of $c$ (it exists by lemma \ref{aresol} above). Then, if we set $\widetilde {c} := \pi^{-1} (c)$, the support $\supp ( \partial \widetilde {c})$ is included in a normal crossing divisor $D$ of $\widetilde X$, and by definition of the geometric filtration, we have for $p \geq - q$
$$c \in \Gc_p C_q(X) \Longleftrightarrow \partial \widetilde {c} \in \Gc_p C_{q-1}(D).$$
In the same way, consider $\pi'$ : $\widetilde Y \longrightarrow Y$ an adapted resolution of $c'$. We have $\supp ( \partial \widetilde {c'}) \subset D'$, with $\widetilde {c'} := \pi'^{-1} (c')$ and $D'$ a normal crossing divisor of $\widetilde Y$, and for $p' \geq -q'$
$$c' \in \Gc_{p'} C_{q'}(Y) \Longleftrightarrow \partial \widetilde {c'} \in \Gc_{p'} C_{q'-1}(D').$$

Now $\pi \times \pi'$ : $\widetilde X \times \widetilde Y \longrightarrow X \times Y~;~(x,y) \mapsto \pi(x) \times \pi'(y)$ is an adapted resolution of $c \times c'$. Indeed, by Lemma \ref{bordprod},
\begin{equation} \label{boundsumres} 
\partial(\widetilde {c} \times \widetilde {c'}) = \partial \widetilde {c} \times \widetilde {c'} + \widetilde {c} \times \partial \widetilde {c'}, 
\end{equation}
in particular,
$$\supp (\partial(\widetilde {c} \times \widetilde {c'})) \subset (D \times \widetilde Y) \cup (\widetilde X \times D') :$$
the subvarieties $D \times \widetilde Y$ and $\widetilde X \times D'$ are normal crossing divisors of $\widetilde X \times \widetilde Y$ because $\widetilde X$ and $\widetilde Y$ are nonsingular, and their union that we denote by $\widetilde D$ is again a normal crossing divisor of $\widetilde X \times \widetilde Y$ since they have no common irreducible component (their intersection is $D \times D'$ which has strictly smaller dimension).
  
Therefore $c \times c' \in \Gc_{s}C_{q+q'}(X \times Y) \Leftrightarrow \partial(\widetilde {c} \times \widetilde {c'}) \in \Gc_s C_{q+q'-1} (\widetilde D)$. We then use the lemma \ref{lemdiv} below to deduce that $\partial(\widetilde {c}) \times \widetilde {c'} \in \Gc_s C_{q+q'-1} (D \times \widetilde Y)$ (and $\widetilde {c} \times \partial(\widetilde {c'}) \in \Gc_s C_{q+q'-1} (\widetilde X \times D')$).
 
By induction on dimension, there exists $p$ and $p'$ in $\mathbb{Z}$ such that $p + p' = s$, $\partial(\widetilde {c}) \in \Gc_p C_{q-1}(D)$ and $\widetilde {c'} \in \Gc_{p'} C_{q'}(\widetilde Y)$, that is $c \in \Gc_p C_{q}(X)$ and $c' \in \Gc_{p'} C_{q'}(Y)$ (since $\widetilde Y$ is nonsingular and $\widetilde {c'} \in D'$, we have $\widetilde {c'} \in \Gc_{p'} C_{q'}(\widetilde Y) \Leftrightarrow \partial(\widetilde {c'}) \in \Gc_p' C_{q'-1}(D')$).
\\

Finally, to prove the result in the general case, consider real algebraic compactifications $\overline{X}$ and $\overline{Y}$ of $X$ and $Y$ respectively. Then $\overline{X} \times \overline{Y}$ is a compactification of $X \times Y$ and, by definition of the geometric filtration,
$$c \times c' \in \Gc_{s}C_{q+q'}(X \times Y) \Leftrightarrow \overline{c \times c'} \in \Gc_{s}C_{q+q'}(\overline{X} \times \overline{Y})$$
and we can use the previous case ($\overline{c} \in \Gc_{p}C_q(\overline{X})$ if and only if $c \in \Gc_{p}C_q(X)$ and $\overline{c'} \in \Gc_{p'}C_{q'}(\overline{Y})$ if and only if $c' \in \Gc_{p'}C_{q'}(Y)$).

\end{proof}

\begin{lem} \label{lemdiv}
With the above notations, we have
$$\partial(\widetilde {c} \times \widetilde {c'}) \in \Gc_s C_{q+q'-1} (\widetilde{D}) \Longleftrightarrow \left\{\begin{array}{l}
\partial(\widetilde {c}) \times \widetilde {c'} \in \Gc_s C_{q+q'-1} (D \times \widetilde Y)\\
\mathrm{\ and\ }\\
\widetilde {c} \times \partial(\widetilde {c'}) \in \Gc_s C_{q+q'-1} (\widetilde X \times D')
\end{array}
\right.$$
\end{lem}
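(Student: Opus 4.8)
\textit{Proof plan.}

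The plan is to prove the two implications separately, applying in each case the additivity short exact sequence of the homological geometric filtration (Theorem 2.7 of \cite{MP}) to one of the two closed sub-divisors $D \times \widetilde Y$ and $\widetilde X \times D'$ of $\widetilde D$, and using a dimension count to decouple the two summands $\partial \widetilde{c} \times \widetilde{c'}$ and $\widetilde{c} \times \partial \widetilde{c'}$ of $\partial(\widetilde{c} \times \widetilde{c'})$ (cf. Lemma \ref{bordprod}). The crucial geometric input, inherited from the reduction in Lemma \ref{aresol}, is that after replacing $X$ and $Y$ by the Zariski closures of the supports of $c$ and $c'$ we may assume $q = \dim \widetilde X$ and $q' = \dim \widetilde Y$, so that $D$ and $D'$ are divisors of dimension $q-1$ and $q'-1$ respectively, while $\supp \widetilde{c}$ and $\supp \widetilde{c'}$ have dimension $q$ and $q'$.

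For the reverse implication, I would observe that each summand is supported in one of the two closed subvarieties $D \times \widetilde Y$ or $\widetilde X \times D'$ of $\widetilde D$, and that the inclusion of such a closed subvariety is filtered: pushing forward the Nash-constructible description of the geometric filtration (section 3 of \cite{MP}) by extension by zero, a chain lying in $\Gc_s$ of the sub-divisor still lies in $\Gc_s C_{q+q'-1}(\widetilde D)$. Hence, if both $\partial \widetilde{c} \times \widetilde{c'} \in \Gc_s C_{q+q'-1}(D \times \widetilde Y)$ and $\widetilde{c} \times \partial \widetilde{c'} \in \Gc_s C_{q+q'-1}(\widetilde X \times D')$, their sum $\partial(\widetilde{c} \times \widetilde{c'})$ belongs to $\Gc_s C_{q+q'-1}(\widetilde D)$.

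For the direct implication, I would restrict $\partial(\widetilde{c} \times \widetilde{c'})$ to each closed sub-divisor through the restriction map of the additivity sequence, which is filtered and surjective. Restricting to $\widetilde X \times D'$: the summand $\widetilde{c} \times \partial \widetilde{c'}$ is already supported in $\widetilde X \times D'$, hence restricts to itself, whereas $\supp(\partial \widetilde{c} \times \widetilde{c'})$ meets $\widetilde X \times D'$ in $\supp(\partial \widetilde{c}) \times (\supp \widetilde{c'} \cap D')$, of dimension at most $(q-1)+(q'-1) < q+q'-1$ (using $\supp \widetilde{c'} \cap D' \subset D'$ with $\dim D' = q'-1$), so it restricts to zero. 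Therefore $\widetilde{c} \times \partial \widetilde{c'}$, being the restriction of $\partial(\widetilde{c} \times \widetilde{c'}) \in \Gc_s C_{q+q'-1}(\widetilde D)$, lies in $\Gc_s C_{q+q'-1}(\widetilde X \times D')$. Restricting instead to $D \times \widetilde Y$ and running the symmetric argument yields $\partial \widetilde{c} \times \widetilde{c'} \in \Gc_s C_{q+q'-1}(D \times \widetilde Y)$.

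The main obstacle is the dimension count underpinning the direct implication: one must ensure that each summand restricts to the zero chain on the wrong sub-divisor, i.e. that its support meets that sub-divisor in dimension strictly smaller than $q+q'-1$. This is precisely where the top-dimensionality reduction of Lemma \ref{aresol} is indispensable, since it forces $\dim D = q-1$ and $\dim D' = q'-1$; without it a low-dimensional support could sit entirely inside the divisor and the two summands would fail to decouple. A secondary point to treat with care is the exact meaning of the restriction of a Borel-Moore chain with closed supports to a closed subvariety, namely that a chain whose support meets the closed set in too low a dimension restricts to the zero chain.
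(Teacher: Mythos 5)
Your reverse implication is fine and matches the paper's (both come down to the fact that a chain supported in a closed subvariety of $\widetilde D$ and filtered there is filtered in $\widetilde D$, together with formula (\ref{boundsumres})). The direct implication, however, has a genuine gap: it is built on ``the restriction map of the additivity sequence'' onto a closed sub-divisor, and no such map exists. The additivity sequence of the geometric filtration (Theorem 2.7 of \cite{MP}; lemma \ref{SEFD} is its dual) reads, for a closed inclusion $Z \hookrightarrow W$,
$$0 \lra \Gc_s C_k(Z) \lra \Gc_s C_k(W) \lra \Gc_s C_k(W \setminus Z) \lra 0 \, ,$$
so the closed subvariety occurs as the \emph{sub}-object, and the surjection is restriction to the \emph{open} complement $W \setminus Z$, not to $Z$. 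One can make set-theoretic sense of an operation $[A] \mapsto [A \cap Z]$ for $Z$ closed (this is your ``secondary point''), but it is not among the chain operations of \cite{MP} (it does not even commute with $\partial$), and the assertion that it is filtered, i.e.\ that it maps $\Gc_s C_{q+q'-1}(\widetilde D)$ into $\Gc_s C_{q+q'-1}(\widetilde X \times D')$, is precisely the nontrivial content of the lemma: quoting it as known begs the question.

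This is exactly the hole that the paper's proof fills, using the Nash-constructible description of $\Gc_{\bullet}$ (section 3 of \cite{MP}): a Nash-constructible function $\varphi : \widetilde D \to 2^{q+q'-1+s}\ZZ$ representing $\partial(\widetilde c \times \widetilde c')$ is multiplied by the characteristic function of the Zariski closure $\overline{A}^{Z}$ of $A = \supp(\partial \widetilde c \times \widetilde c')$; the product is again Nash-constructible, and your dimension count (in the form $\overline{A}^{Z} \cap \overline{B}^{Z} \subset D \times D'$, of dimension $< q+q'-1$) shows that it represents $[A] = \partial \widetilde c \times \widetilde c'$, which therefore lies in $\Gc_s C_{q+q'-1}(\widetilde D)$, hence in $\Gc_s C_{q+q'-1}(D \times \widetilde Y)$ since its support lies in that subvariety. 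Alternatively, your plan can be repaired using only the genuine additivity sequence: restrict $\partial(\widetilde c \times \widetilde c')$ to the open subset $U := \widetilde D \setminus (D \times \widetilde Y) = (\widetilde X \setminus D) \times D'$, where it equals $(\widetilde c \times \partial \widetilde c')|_U$ and is filtered; lift it by surjectivity of $\Gc_s C_{q+q'-1}(\widetilde X \times D') \to \Gc_s C_{q+q'-1}(U)$ to some filtered chain $e$; then $e + \widetilde c \times \partial \widetilde c'$ restricts to zero on $U$, so it is supported in $D \times D'$ and is zero by the same dimension count, whence $\widetilde c \times \partial \widetilde c' = e$ is filtered. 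Either way, the dimension count you identified is necessary but not sufficient; you must supply a mechanism (Nash-constructible functions, or the lift-and-compare argument above) making the decoupling of the two summands compatible with $\Gc_{\bullet}$.
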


\begin{proof}
The implication from right to left follows from the definition of the geometric filtration ($D \times \widetilde Y$ and $\widetilde X \times D'$ are two subvarieties of $\widetilde D$) and the formula (\ref{boundsumres}).

We prove the implication from left to right using the description of the geometric filtration via Nash-constructible functions. 

First denote $A := \supp (\partial(\widetilde {c}) \times \widetilde {c'} )$, $B := \supp (\widetilde {c} \times \partial(\widetilde {c'}))$, then the closed semialgebraic set $A \cup B$ represents the chain $\partial(\widetilde {c} \times \widetilde {c'})$ in $C_{q + q'-1}(\widetilde D)$ (because $A \cap B \subset D \times D'$ is of strictly smaller dimension). Furthermore, because it belongs to $\Gc_s C_{q+q'-1} (\widetilde{D})$, the chain $\partial(\widetilde {c} \times \widetilde {c'})$ is represented by a Nash-constructible function $\varphi : \widetilde{D} \rightarrow 2^{q+q'-1 + s} \mathbb{Z}$ and we have
$$A \cup B = \{ (x,y) \in \widetilde D ~|~ \varphi(x,y) \notin 2^{q+q' + s} \mathbb{Z} \},$$
up to a set of dimension $< q + q' -1$. 

Consider now the characteristic functions $\psi_A$ and $\psi_B$ on $\widetilde{D}$ of the Zariski closures of $A$ and $B$ respectively. The Nash-constructible function $\varphi \cdot \psi_A : \widetilde{D} \rightarrow 2^{q+q'-1 + s} \mathbb{Z} $ represents the chain $\left[(A \cup B) \cap \overline{A}^{Z}\right]$ in $C_{q + q'-1}(\widetilde D)$. But, since the intersection of the Zariski closures of $A$ and $B$ is of dimension $< q + q'- 1$ (because it is a subvariety of $D \times D'$), we have $\left[(A \cup B) \cap \overline{A}^{Z}\right] = [A]$ and the Nash-constructible function $\varphi \cdot \psi_A : \widetilde{D} \rightarrow 2^{q+q'-1 + s} \mathbb{Z} $ represents the chain $[A] = \partial(\widetilde {c}) \times \widetilde {c'}$. Consequently, $\partial(\widetilde {c}) \times \widetilde {c'} \in \Gc_s C_{q+q'-1} (\widetilde D)$ and, since $\supp (\partial(\widetilde {c}) \times \widetilde {c'}) \subset D \times \widetilde Y$, we have
$$\partial(\widetilde {c}) \times \widetilde {c'} \in \Gc_s C_{q+q'-1} (D \times \widetilde Y).$$

Symmetrically, the Nash-constructible function $\varphi \cdot \psi_B : \widetilde{D} \rightarrow 2^{q+q'-1 + s} \mathbb{Z}$ represents ${c} \times \partial(\widetilde {c'})$ and  
$$\widetilde {c} \times \partial(\widetilde {c'}) \in \Gc_s C_{q+q'-1} (\widetilde X \times D').$$

\end{proof}

Next, we want to find a relation between the geometric filtration of the product variety $X \times Y$ and the product of the geometric filtrations of $X$ and $Y$. First, we have to make precise what we mean by a product of filtered complexes :

\begin{definition} \label{pfc}
Let $(K_{*},F)$ et $(M_{*},J)$ be two filtered complexes in the category $\Cc$. We define $\left( (K \otz M)_{*} , F \ot J \right)$ to be the complex given by
$$(K \otimes M)_n := \bigoplus_{i+j=n} K_i \otz M_j$$
equipped with the differential
$$d(x \otimes y) := dx \otimes y + x \otimes dy$$
and the bounded increasing filtration given by
$$(F \ot J)_p (K \ot M)_n := \bigoplus_{i+j=n} \sum_{a+b=p} F_a K_i \otz J_b M_j$$
\end{definition}

The latter filtration induces a spectral sequence converging to the homology of the product of the filtered complexes. In lemma \ref{prodcomp} below, we give the relation between this spectral sequence and some product of the spectral sequences induced by each filtered complex. This result will follow from the K\"unneth isomorphism (see for instance \cite{GR}, Theorem 29.10).

\begin{lem} \label{prodcomp} Let $(K_*, F)$ and $(M_*,J)$ be two filtered chain complexes. The spectral sequence of their product verifies, for $r \gee 0$, 
\begin{equation} \label{specprod}
E^r_{a,b} (K \otimes M) \cong \bigoplus_{p+ s = a, q + t = b} E^r_{p,q}(K) \otimes E^r_{s,t}(M).
\end{equation}
\end{lem}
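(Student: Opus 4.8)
The plan is to argue by induction on the page number $r$, exploiting the fact that over the field $\ZZ_2$ the Künneth theorem produces a clean isomorphism with no $\Tor$ contribution. The guiding principle is twofold: forming the associated graded of the tensor product filtration commutes with the tensor product, and the spectral sequence differential $d_r$ of $K \ot M$ splits, under the resulting identification, as the total differential $d_r^K \ot \id + \id \ot d_r^M$ on the tensor product of the two spectral sequences. Thus the inductive step amounts to applying Künneth page by page.

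For the base case $r = 0$, I would first isolate the purely algebraic statement that for two filtered $\ZZ_2$-vector spaces the associated graded of the tensor product filtration is the tensor product of the associated gradeds; this holds because $\ZZ_2$ is a field, so every short exact sequence of filtration subquotients splits and tensoring is exact. Applied degreewise, this yields
$$E^0_{a,b}(K \ot M) = \Gr^{F \ot J}_a (K \ot M)_{a+b} \cong \bigoplus_{\substack{p+s=a \\ q+t=b}} \Gr^F_p K_{p+q} \ot \Gr^J_s M_{s+t} = \bigoplus_{\substack{p+s=a \\ q+t=b}} E^0_{p,q}(K) \ot E^0_{s,t}(M).$$
One then checks that, since the total differential on $K \ot M$ is $d^K \ot \id + \id \ot d^M$ and the filtration is the tensor product filtration, the induced differential $d_0$ matches the tensor product differential on the right-hand side.

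For the inductive step, assuming the isomorphism $E^r(K \ot M) \cong E^r(K) \ot E^r(M)$ as differential bigraded complexes, with $d_r$ the tensor product differential, I would compute $E^{r+1}(K \ot M) = H_*(E^r(K \ot M), d_r)$ and apply the Künneth theorem (\cite{GR}, Theorem 29.10) to the tensor product of the complexes $(E^r(K), d_r^K)$ and $(E^r(M), d_r^M)$. Because we work over the field $\ZZ_2$, the $\Tor$ term of the Künneth sequence vanishes, giving $H_*(E^r(K) \ot E^r(M)) \cong H_*(E^r(K)) \ot H_*(E^r(M)) = E^{r+1}(K) \ot E^{r+1}(M)$; the bidegrees match since $d_r^K$ and $d_r^M$ both carry bidegree $(-r, r-1)$, so their total differential has the same bidegree as $d_r$.

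The main obstacle I anticipate is the verification that the spectral sequence differential $d_r$ of $K \ot M$ corresponds under the identification to the total differential $d_r^K \ot \id + \id \ot d_r^M$. This requires unwinding the definition of $d_r$ in terms of lifting a class to a representative in the appropriate level of the filtration, applying the total differential $d^K \ot \id + \id \ot d^M$, and reading off the result in the relevant graded quotient, all while tracking how this bookkeeping distributes over the two tensor factors. This compatibility is what makes the induction self-propagating, and it is the only genuinely non-formal point; everything else reduces to the flatness of $\ZZ_2$-vector spaces and the degreewise exactness it guarantees.
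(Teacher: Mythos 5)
Your proposal is correct and follows essentially the same route as the paper: induction on $r$, with the base case $r=0$ handled by identifying the associated graded of the product filtration with the tensor product of the associated gradeds (the paper isolates exactly this as a separate lemma, proved by a direct kernel computation rather than your splitting argument), and the inductive step by applying the K\"unneth isomorphism over $\ZZ_2$ to pass from $E^r$ to $E^{r+1}$. The compatibility of $d_r$ with the tensor-product differential, which you flag as the main obstacle, is indeed the delicate point; the paper passes over it silently, so your treatment is if anything the more careful of the two.
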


\begin{proof} We prove this lemma by induction on $r$. First we have, for $a,~b \in \mathbb{Z}$, 
\begin{eqnarray*}
E^0_{a,b} (K \otimes M) & = & \frac{(F \otimes J)_a (K \otimes M)_{a+b}}{(F \otimes J)_{a-1} (K \otimes M)_{a+b}} \\
& = & \bigoplus_{i + j = a + b} \frac{\sum_{\alpha + \beta = a} F_{\alpha} K_i \otimes J_{\beta} M_j}{\sum_{\alpha + \beta = a-1} F_{\alpha} K_i \otimes J_{\beta} M_j}\\
& = & \bigoplus_{i+j = a+b} \bigoplus_{\alpha + \beta = a} \frac{ F_{\alpha} K_i}{F_{\alpha-1} K_i} \otimes \frac{ J_{\beta} M_j}{J_{\beta-1} M_j} \\
& = & \bigoplus_{p+ s = a, q + t = b} E^0_{p,q}(K) \otimes E^0_{s,t}(M) 
\end{eqnarray*}
The third equality is given by the lemma \ref{lemprodspec} below. 
\\
 
Then suppose the property is true for a fixed $r \gee 0$. The term $E^r(K \otimes M)$ of the spectral sequence induced by the filtered tensor product of $K_*$ and $M_*$ is composed of chain complexes $(E^r_{*,*}, d^r_{*,*})$ whose homology computes the term $E^{r+1}(K \otimes M)$. Applying homology and K\"unneth isomorphism to the formula (\ref{specprod}) given at level $r$ by induction, we obtain the property at level $r+1$. 

\end{proof}

\begin{lem} \label{lemprodspec} Let $a$ and $b$ be in $\mathbb{Z}$ and let $i,j \in \mathbb{Z}$ such that $i + j = a + b$. Keeping the notations from lemma \ref{prodcomp} above, we have 
$$\frac{\sum_{\alpha + \beta = a} F_{\alpha} K_i \otimes J_{\beta} M_j}{\sum_{\alpha + \beta = a-1} F_{\alpha} K_i \otimes J_{\beta} M_j} \cong \bigoplus_{\alpha + \beta = a} \frac{ F_{\alpha} K_i}{F_{\alpha-1} K_i} \otimes \frac{ J_{\beta} M_j}{J_{\beta-1} M_j}.$$
\end{lem}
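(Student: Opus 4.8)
The plan is to exploit that we are working over the field $\ZZ_2$, so that every bounded increasing filtration of a vector space splits. First I would choose, for each $\alpha$, a complement $G_\alpha$ of $F_{\alpha-1} K_i$ inside $F_\alpha K_i$, and likewise a complement $H_\beta$ of $J_{\beta-1} M_j$ inside $J_\beta M_j$. Since the filtrations are bounded, this yields direct sum decompositions $K_i = \bigoplus_\alpha G_\alpha$ and $M_j = \bigoplus_\beta H_\beta$ with
$$F_\alpha K_i = \bigoplus_{\alpha' \lee \alpha} G_{\alpha'} \quad \mathrm{and} \quad J_\beta M_j = \bigoplus_{\beta' \lee \beta} H_{\beta'},$$
together with the canonical identifications $G_\alpha \cong F_\alpha K_i / F_{\alpha-1} K_i$ and $H_\beta \cong J_\beta M_j / J_{\beta-1} M_j$ induced by the quotient maps.

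Next, because tensoring over a field commutes with direct sums, I would rewrite each building block as $F_\alpha K_i \otimes J_\beta M_j = \bigoplus_{\alpha' \lee \alpha,\ \beta' \lee \beta} G_{\alpha'} \otimes H_{\beta'}$. The key combinatorial observation is then that a graded piece $G_{\alpha'} \otimes H_{\beta'}$ occurs in the sum $\sum_{\alpha+\beta=a} F_\alpha K_i \otimes J_\beta M_j$ precisely when there exist $\alpha, \beta$ with $\alpha+\beta=a$, $\alpha' \lee \alpha$ and $\beta' \lee \beta$, which holds if and only if $\alpha' + \beta' \lee a$ (take $\alpha = \alpha'$, $\beta = a - \alpha'$). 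Hence both sums appearing in the statement are in fact direct sums,
$$\sum_{\alpha+\beta=a} F_\alpha K_i \otimes J_\beta M_j = \bigoplus_{\alpha'+\beta' \lee a} G_{\alpha'} \otimes H_{\beta'}, \qquad \sum_{\alpha+\beta=a-1} F_\alpha K_i \otimes J_\beta M_j = \bigoplus_{\alpha'+\beta' \lee a-1} G_{\alpha'} \otimes H_{\beta'}.$$
Taking the quotient of the first by the second leaves exactly $\bigoplus_{\alpha'+\beta'=a} G_{\alpha'} \otimes H_{\beta'}$, which under the identifications above is the right-hand side $\bigoplus_{\alpha+\beta=a} \left( F_\alpha K_i / F_{\alpha-1} K_i \right) \otimes \left( J_\beta M_j / J_{\beta-1} M_j \right)$.

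I expect the main obstacle to be this middle step: verifying that the a priori non-direct sum of the overlapping subspaces $F_\alpha K_i \otimes J_\beta M_j$ collapses to the clean direct sum indexed by $\{\alpha'+\beta' \lee a\}$. This is exactly where the splitting is essential, since without it these subspaces genuinely overlap and one cannot read off the graded quotient directly. I would also remark that, although the chosen splittings are not canonical, the resulting isomorphism is the one induced on associated gradeds by the inclusions $F_\alpha K_i \otimes J_\beta M_j \hra (K \otimes M)$ followed by projection; passing to associated graded is functorial, so the isomorphism is natural and in particular compatible with the differentials. This naturality is what is needed when applying the lemma in the inductive step of Lemma \ref{prodcomp}.
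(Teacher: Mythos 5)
Your proof is correct, and it takes a genuinely different route from the paper's. The paper makes no choices of splittings: it directly defines the canonical map $\psi \colon \sum_{\alpha+\beta=a} F_\alpha K_i \otimes J_\beta M_j \to \bigoplus_{\alpha+\beta=a} (F_\alpha K_i/F_{\alpha-1}K_i) \otimes (J_\beta M_j/J_{\beta-1}M_j)$ on simple tensors by $x \otimes y \mapsto \overline{x} \otimes \widehat{y}$, notes that it is surjective, and then identifies $\ker\psi$ with $\sum_{\alpha+\beta=a-1} F_\alpha K_i \otimes J_\beta M_j$, concluding by the universal property of the quotient; the kernel identification (essentially that the kernel of $F_\alpha \otimes J_\beta \to (F_\alpha/F_{\alpha-1}) \otimes (J_\beta/J_{\beta-1})$ is $F_{\alpha-1}\otimes J_\beta + F_\alpha\otimes J_{\beta-1}$) is the delicate step there. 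You instead split the filtrations --- legitimate since all spaces are $\ZZ_2$-vector spaces and the filtrations are bounded --- and reduce the statement to bookkeeping of the bigraded pieces $G_{\alpha'} \otimes H_{\beta'}$: your observation that such a piece occurs in the sum precisely when $\alpha'+\beta' \lee a$ is right, and the quotient of the two nested direct sums is then read off. Your route buys two things the paper has to work for: there is no well-definedness issue (the paper's $\psi$ is prescribed on generators of a sum that is not a priori direct, a point it passes over with ``(well-)defined''), and the kernel computation becomes automatic. The cost is the non-canonical choice of complements, which matters for the inductive step of lemma \ref{prodcomp}, where the isomorphism must be compatible with the maps induced by the differentials before applying the K\"unneth theorem at the next level; your closing remark handles this correctly, since two lifts in $F_\alpha K_i \otimes J_\beta M_j$ of the same class of $(F_\alpha K_i/F_{\alpha-1}K_i) \otimes (J_\beta M_j/J_{\beta-1}M_j)$ differ by an element of $F_{\alpha-1}K_i\otimes J_\beta M_j + F_\alpha K_i \otimes J_{\beta-1}M_j$, which dies in the quotient, so the isomorphism you construct coincides with the canonical one induced by the paper's $\psi$.
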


\begin{proof}
Denoting simply $F_{\alpha} K_i$ by $F_{\alpha}$ and $J_{\beta} M_j$ by $J_{\beta}$, let $\psi$ be the $\mathbb{Z}_2$-linear map 
$$\sum_{\alpha + \beta = a} F_{\alpha} \otimes J_{\beta} \rightarrow \bigoplus_{\alpha + \beta = a} \frac{ F_{\alpha}}{F_{\alpha-1}} \otimes \frac{ J_{\beta}}{J_{\beta-1}}$$ 
(well-)defined by, if $x \otimes y \in F_{\alpha} \otimes J_{\beta}$, $\psi(x \otimes y) := \overline{x} \otimes \widehat{y} \in \frac{ F_{\alpha}}{F_{\alpha-1}} \otimes \frac{ J_{\beta}}{J_{\beta-1}}$. 
\\

The map $\psi$ is surjective and $\sum_{r+s = a -1} F_r \otimes J_s \subset \ker \psi$. Now let $\gamma \in \ker \psi$. Then, $\gamma = \sum_{\alpha + \beta = a} \sum_{i \in I_{\alpha, \beta}} x_i^{\alpha} \otimes y_i^{\beta}$ where, for all $\alpha, \beta \in \mathbb{Z}$ such that $\alpha + \beta = a$, $I_{\alpha, \beta}$ is finite and for all $i \in I_{\alpha, \beta}$, $x_i^{\alpha} \in F_{\alpha}$ and $y_i^{\beta} \in J_{\beta}$. We have
$$0 = \psi(\gamma) = \sum_{\alpha + \beta = a} \sum_{i \in I_{\alpha, \beta}} \overline{x_i^{\alpha}} \otimes \widehat{y_i^{\beta}},$$
Let $\alpha, \beta \in \mathbb{Z}$ such that $\alpha + \beta = a$, then $\sum_{i \in I_{\alpha, \beta}} \overline{x_i^{\alpha}} \otimes \widehat{y_i^{\beta}} = 0$, which means that, for all $i \in I_{\alpha, \beta}$, there exist $z_i^{\alpha-1} \in F_{\alpha-1}$ and $w_i^{\beta-1} \in J_{\beta-1}$ such that $\sum_{i} (x_i^{\alpha} + z_i^{\alpha-1}) \otimes (y_i^{\beta} + w_i^{\beta-1}) = 0$ i.e.
$$\sum_{i \in I_{\alpha, \beta}} x_i^{\alpha} \otimes y_i^{\beta} = \sum_{i \in I_{\alpha, \beta}} x_i^{\alpha} \otimes w_i^{\beta-1} + \sum_{i \in I_{\alpha, \beta}} z_i^{\alpha-1} \otimes y_i^{\beta} + \sum_{i \in I_{\alpha, \beta}} z_i^{\alpha-1} \otimes w_i^{\beta-1} \in F_{\alpha} \otimes J_{\beta-1} + F_{\alpha-1} \otimes J_{\beta}.$$
As a consequence, $\gamma \in \sum_{r+s = a -1} F_r \otimes J_s$ and $\ker \psi = \sum_{r+s = a -1} F_r \otimes J_s$. Since the map~$\psi$ induces an isomorphism $\sum_{\alpha + \beta = a} F_{\alpha} \otimes J_{\beta} / \ker \psi \rightarrow \im \psi$ by the universal property of the quotient, we get the result. 
\end{proof}

This lemma \ref{prodcomp} will allow us to relate the weight spectral sequence of a product of real algebraic varieties and the product of the weight spectral sequences, as an interpretation of the following result. We relate the product of geometric filtrations with the geometric filtration of the product : the map that associates to a tensor product of chains their cross product is a filtered quasi-isomorphism with respect to the geometric filtration.

\begin{thm} \label{prodhom}
We have a filtered quasi-isomorphism
$$u  : \Gc_{\bullet} C_*(X) \otimes \Gc_{\bullet} C_*(Y)  \lra  \Gc_{\bullet} C_*(X \times Y)$$
given by 
$$c_X \otimes c_Y  \longmapsto  c_X \times c_Y \in \Gc_{s} C_n(X \times Y)$$
if $c_X \in \mathcal{G}_p C_q(X)$ and $c_Y \in \mathcal{G}_{p'} C_{q'}(Y)$ with $p + p' = s$ and $q + q' = n$.
\end{thm}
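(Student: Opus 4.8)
The plan is to prove the statement in two moves: first that $u$ is a genuine morphism of filtered complexes in $\Cc$, and then that it is a filtered quasi-isomorphism by invoking the naturality and uniqueness in the extension criterion of \cite{GNA}, which reduces the whole problem to the Künneth formula on nonsingular projective varieties. First I would check well-definedness: by Lemma \ref{prodwelldef} together with the bilinearity of the cross product, $c_X \otimes c_Y \mapsto c_X \times c_Y$ descends to a $\ZZ_2$-linear map $C_*(X) \ot C_*(Y) \to C_*(X \times Y)$; Lemma \ref{bordprod} shows it is a chain map for the Leibniz differential of Definition \ref{pfc}, and Proposition \ref{prodchain}(1) shows it carries $(F \ot J)_s$ into $\Gc_s C_*(X \times Y)$, so $u$ is a morphism in $\Cc$. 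It is natural in $(X,Y)$ for proper regular morphisms since pushforward commutes with the cross product, $(f \times g)_*(c_X \times c_Y) = f_* c_X \times g_* c_Y$.

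The essential point is that $u$ induces an isomorphism on $E^1$. I would \emph{not} attempt this on chains: $u$ is not surjective (the diagonal of $X \times X$ is not a product of chains), so there is no hope of arguing directly on $E^0$. Instead I fix $Y$ nonsingular projective and view both
$$X \longmapsto \Gc_{\bullet} C_*(X) \ot \Gc_{\bullet} C_*(Y), \qquad X \longmapsto \Gc_{\bullet} C_*(X \times Y)$$
as functors $\Schc \to \Hoc$. Both satisfy additivity and acyclicity: for the second functor this follows from the corresponding properties of $\Gc_{\bullet} C_*$ (Theorem 2.7 of \cite{MP}) and the fact that $-\times Y$ sends closed inclusions, disjoint unions and acyclic squares to data of the same type; for the first functor, the additivity and acyclicity short exact sequences of $\Gc_{\bullet} C_*$ stay exact after tensoring with the flat $\ZZ_2$-complex $\Gc_{\bullet} C_*(Y)$, while Lemma \ref{prodcomp} shows that tensoring a filtered-acyclic complex (one with $E^1 = 0$) with $\Gc_{\bullet} C_*(Y)$ again has $E^1 = 0$. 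Thus $u$ is a natural transformation between two extensions of functors defined on $\V$, and by the uniqueness part of Theorem 2.2.2 of \cite{GNA} it suffices to verify that $u$ is a filtered quasi-isomorphism on $\V$.

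For $X$ and $Y$ nonsingular projective, $X \times Y$ is again nonsingular projective, so by Theorem 2.8 of \cite{MP} each geometric filtration is filtered quasi-isomorphic to the canonical one; hence $E^1(\Gc_{\bullet} C_*(X)) = H_*(X)$ and likewise for $Y$ and $X\times Y$, concentrated on the relevant diagonal. Lemma \ref{prodcomp} at level one then gives $E^1\big(\Gc_{\bullet} C_*(X) \ot \Gc_{\bullet} C_*(Y)\big) \cong \bigoplus H_*(X) \ot H_*(Y)$, and the map induced by $u$ on $E^1$ is exactly the homological cross product $H_*(X) \ot H_*(Y) \to H_*(X \times Y)$, which is an isomorphism by the Künneth theorem over the field $\ZZ_2$. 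This proves the claim for every $X \in \Schc$ and every nonsingular projective $Y$. I would then run the same extension argument a second time, now in the variable $Y$ for a fixed but arbitrary $X$: both functors of $Y$ are again additive and acyclic, and $u$ is already known to be a filtered quasi-isomorphism for $Y \in \V$, so uniqueness promotes it to all $Y \in \Schc$.

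The step I expect to be the main obstacle is the careful verification of the \cite{GNA} hypotheses for the tensor-product functor, and in particular the acyclicity claim: one must check that an acyclic square tensored with $\Gc_{\bullet} C_*(Y)$ is still acyclic, which is precisely where flatness over $\ZZ_2$ and the spectral-sequence Künneth isomorphism of Lemma \ref{prodcomp} do the work of controlling $E^1$. The remaining bookkeeping, namely the two-step extension needed to free both variables and the identification of the induced map with the cross product, is then routine once the Künneth formula is in place.
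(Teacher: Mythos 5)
Your proposal is correct and follows essentially the same route as the paper: prove the Künneth isomorphism on $E^1$ for nonsingular projective varieties via Lemma \ref{prodcomp} and the purity of the filtration there, then fix $Y \in \V$, verify the additivity and acyclicity hypotheses of Theorem 2.2.2 of \cite{GNA} for both functors $X \mapsto \Gc_{\bullet} C_*(X) \otimes \Gc_{\bullet} C_*(Y)$ and $X \mapsto \Gc_{\bullet} C_*(X \times Y)$ (using exactness of tensoring over $\ZZ_2$ and Lemma \ref{prodcomp} for the first, and stability of acyclic squares under $- \times Y$ for the second), and finally repeat the extension argument in the variable $Y$. The only cosmetic difference is that you cite the uniqueness clause of the extension theorem where the paper invokes the naturality of the extension (Proposition 1.4 of \cite{MP}), but this is the same mechanism.
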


The morphism $u$ is filtered by proposition \ref{prodchain}.

\begin{cor} \label{prodhomcor}
The filtered complexes $\Wcc(X) \otz \Wcc(Y)$ and $\Wcc(X \times Y)$ are isomorphic in $H o \, \mathcal{C}$ and the above map $u$ induces a filtered isomorphism
$$u_{\infty}  : \Wc_{\bullet} H_*(X) \otimes \Wc_{\bullet} H_*(Y) \lra \Wc_{\bullet} H_*(X \times Y).$$
\end{cor}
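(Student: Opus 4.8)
My plan is to deduce the corollary from Theorem \ref{prodhom} together with two facts: that the geometric filtration $\Gc_\bullet C_*$ represents the weight complex $\Wcc$ in $\Hoc$ (the realization result of \cite{MP}), and that the tensor product of filtered complexes descends to $\Hoc$. To establish the latter, I would first check that tensoring with a fixed object of $\Cc$ preserves filtered quasi-isomorphisms: if $f$ induces an isomorphism on $E^1$, then so does $f \otimes \id$, because Lemma \ref{prodcomp} identifies $E^1(K \otimes M)$ naturally with $\bigoplus E^1(K) \otimes E^1(M)$ (over the field $\ZZ_2$ the K\"unneth formula carries no Tor term). Hence $- \otimes -$ passes to the localized category $\Hoc$.

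For the first assertion, I would then tensor the filtered quasi-isomorphisms $\Gc_\bullet C_*(X) \simeq \Wcc(X)$ and $\Gc_\bullet C_*(Y) \simeq \Wcc(Y)$ to obtain an isomorphism $\Gc_\bullet C_*(X) \otimes \Gc_\bullet C_*(Y) \cong \Wcc(X) \otz \Wcc(Y)$ in $\Hoc$. Theorem \ref{prodhom} asserts that $u$ is itself a filtered quasi-isomorphism, giving $\Gc_\bullet C_*(X) \otimes \Gc_\bullet C_*(Y) \cong \Gc_\bullet C_*(X \times Y) \cong \Wcc(X \times Y)$ in $\Hoc$. Composing these isomorphisms yields $\Wcc(X) \otz \Wcc(Y) \cong \Wcc(X \times Y)$.

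For the second assertion, I would observe that $u$ induces on homology the cross product map $H_*(X) \otimes H_*(Y) \to H_*(X \times Y)$ --- using the K\"unneth identification of the homology of the source complex --- which is filtered because $u$ is. Since $u$ is a filtered quasi-isomorphism, it induces an isomorphism on the abutments $E^\infty$ of the two spectral sequences; applying Lemma \ref{prodcomp} at $r = \infty$ (legitimate as the filtrations are bounded and the spectral sequences converge), the $E^\infty$-term of the source is $\bigoplus E^\infty(\Gc_\bullet C_*(X)) \otimes E^\infty(\Gc_\bullet C_*(Y))$, which is exactly $\Gr$ of the tensor product filtration $\Wc_\bullet H_*(X) \otimes \Wc_\bullet H_*(Y)$, while that of the target is $\Gr \, \Wc_\bullet H_*(X \times Y)$. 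Thus $u_\infty$ induces an isomorphism on associated gradeds, and since both weight filtrations are bounded, a filtered map that is an isomorphism on the associated graded is a filtered isomorphism; this gives the desired $u_\infty$. The one step requiring genuine care is the bookkeeping in Lemma \ref{prodcomp} matching the tensor-product filtration on homology with the tensor product of the two weight filtrations; everything else is formal once Theorem \ref{prodhom} is in hand.
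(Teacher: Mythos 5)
Your proposal is correct and follows essentially the same route the paper intends: the corollary is meant to fall out of Theorem \ref{prodhom} combined with Lemma \ref{prodcomp} and the fact that the geometric filtration realizes the weight complex, exactly as you argue (the paper itself gives no separate proof, and its discussion following the corollary --- the isomorphisms $u'_r$ and $u'_\infty$ --- is precisely your spectral-sequence step). Your added verifications, that tensoring with a fixed filtered complex preserves filtered quasi-isomorphisms so that $\otimes$ descends to $\Hoc$, and the $E^\infty$-level bookkeeping identifying the induced filtration on homology with the tensor-product weight filtration, are details the paper leaves implicit, and they are handled correctly.
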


Above theorem \ref{prodhom} has an interpretation from the viewpoint of spectral sequences : $u$ is a morphism of filtered complexes which induces an isomorphism on spectral sequences from level one
$$u_r \ : \ E^r_{a,b} ( \Gc_{\bullet} C_*(X) \otimes \Gc_{\bullet} C_*(Y) ) \stackrel{\sim}{\lra} E^r_{a,b}(X \times Y)$$
for $r \gee 1$. Using Lemma \ref{prodcomp}, we get an isomorphism
\begin{equation} \label{hss}
u'_r \ : \ \bigoplus_{p+s=a, \ q+t=b} E^r_{p,q}(X) \otz E^r_{s,t}(Y) \stackrel{\sim}{\lra} E^r_{a,b}(X \times Y)
\end{equation}
(for $r \gee 1$). In particular, using this isomorphism and proposition \ref{vbnc}, we show the multiplicativity property of the virtual Poincaré polynomial
$$\beta( \, \cdot \, ) (u) := \sum_{q \gee 0} \beta_q(\, \cdot \,) u^q$$
without using the weak factorization theorem as in \cite{MPVB} and \cite{FMT}.

Furthermore, the Künneth isomorphism in homology is filtered through the isomorphisms
$$u'_{\infty} \ : \ \bigoplus_{p+s=a, q+t=b} E^{\infty}_{p,q}(X) \otz E^{\infty}_{s,t}(Y) \stackrel{\sim}{\lra} E^{\infty}_{a,b}(X \times Y)$$

\begin{rem}
To prove theorem \ref{prodhom}, we use the naturality property of the extension theorem of \cite{GNA} (Proposition 1.4 of \cite{MP}). We first show that $u$ is a filtered quasi-isomorphism for nonsingular projective real algebraic varieties and then use this naturality to prove that $u$ is a filtered quasi-isomorphism for all real algebraic varieties. We do not know whether the theorem follows directly from the geometric filtration of \cite{MP} and proposition \ref{prodchain}, without the theory of cubical hyperresolutions of \cite{GNA}.
\end{rem}

\begin{proof} {\it (of Theorem \ref{prodhom})} When $X$ and $Y$ are nonsingular projective varieties, so is the product variety $X \times Y$ and the three induced weight spectral sequences verify
$$E^{\infty}_{p,q} = E^1_{p,q} = \left\{ \begin{array}{ll}
 H_{p+q} &  \mathrm{\ if\ } p+q = -p\\
0 &  \mathrm{\ otherwise.}
\end{array} \right.$$
Therefore, by lemma \ref{prodcomp}, the morphism $u : \Gc_{\bullet} C_*(X) \otimes \Gc_{\bullet} C_*(Y)  \lra  \Gc_{\bullet} C_*(X \times Y)$ induces on $E^1 = E^{\infty}$ the morphism
$H_*(X) \otz H_*(Y) \lra H_*(X \times Y),$
which is the classical K\"unneth isomorphism in singular homology. Thus, $u$ is a filtered quasi-isomorphism when $X$ and $Y$ are projective nonsingular.
\\

Let $Y$ be now a fixed nonsingular projective variety and consider the two functors
$$\phi_1 : \mathbf{Sch}_c(\mathbb{R}) \rightarrow \mathcal{C}~;~ X \mapsto \mathcal{G} C(X) \otimes \mathcal{G} C(Y)$$
and
$$\phi_2 : \mathbf{Sch}_c(\mathbb{R}) \rightarrow \mathcal{C}~;~ X \mapsto \mathcal{G} C(X \times Y)$$
(in this part of the proof, we drop the subscripts of filtrations and complexes for readability). We proved above that these two functors are quasi-isomorphic in $\mathcal{C}$ on $\mathbf{V}(\mathbb{R})$ (we denote by $\varphi_1$ and $\varphi_2$ their respective restrictions to $\mathbf{V}(\mathbb{R})$). Furthermore, they both verify the additivity and acyclicity conditions of Theorem (2.2.2) of \cite{GNA}. Indeed, if $Z \hookrightarrow X$ is a closed inclusion, the additivity of the geometric filtration (see Theorem 2.7 of \cite{MP}) induces the exactness of the sequences
$$0 \rightarrow \mathcal{G} C(Z) \otimes \mathcal{G} C(Y) \rightarrow \mathcal{G} C(X) \otimes \mathcal{G} C(Y) \rightarrow \mathcal{G} C(X \setminus Z) \otimes \mathcal{G} C(Y) \rightarrow 0$$ 
and
$$0 \rightarrow \mathcal{G} C(Z \times Y) \rightarrow \mathcal{G} C(X \times Y) \rightarrow \mathcal{G} C((X \setminus Z) \times Y) \rightarrow 0$$
(the induced morphism $Z \times Y \hookrightarrow X \times Y$ is also a closed inclusion). Now, if the diagram
$$\begin{array}{ccc}
\widetilde Z & \rightarrow & \widetilde X\\
\downarrow & & \downarrow\\
Z & \rightarrow & X
\end{array}$$
is an acyclic square, we check that the simple filtered complexes associated to the induced diagrams
$$\begin{array}{ccc}
\mathcal{G} C(\widetilde Z) \otimes \mathcal{G} C(Y) & \longrightarrow & \mathcal{G} C(\widetilde X) \otimes \mathcal{G} C(Y)\\
\downarrow & & \downarrow\\
\mathcal{G} C(Z) \otimes \mathcal{G} C(Y) & \longrightarrow & \mathcal{G} C(X) \otimes \mathcal{G} C(Y)
\end{array}$$
denoted by $\Kc_1(Y)$, and
$$\begin{array}{ccc}
\mathcal{G} C(\widetilde Z \times Y) & \longrightarrow & \mathcal{G} C(\widetilde X \times Y)\\
\downarrow & & \downarrow\\
\mathcal{G} C(Z \times Y) & \longrightarrow & \mathcal{G} C(X \times Y)
\end{array}$$
denoted by $\Kc_2(Y)$, are acyclic. The simple filtered complex $\mathbf{s}\Kc_2(Y)$ is acyclic because the geometric filtration verifies the acyclicity condition for an acyclic square (see Theorem 2.7 of \cite{MP}) and the diagram 
$$\begin{array}{ccc}
\widetilde Z \times Y & \rightarrow & \widetilde X \times Y\\
\downarrow & & \downarrow\\
Z \times Y & \rightarrow & X \times Y
\end{array}$$
is acyclic. The spectral sequence induced by $\mathbf{s}\Kc_1(Y)$ verifies $E^1 = 0$ because $\mathbf{s}\Kc_1(Y)$ is nothing more than the tensor product of filtered complexes $\mathbf{s} \Kc_0 \otimes \mathcal{G} C(Y)$, where $\Kc_0$ is the diagram 
$$\begin{array}{ccc}
\mathcal{G} C(\widetilde Z) & \longrightarrow & \mathcal{G} C(\widetilde X)\\
\downarrow & & \downarrow\\
\mathcal{G} C(Z) & \longrightarrow & \mathcal{G} C(X)
\end{array}$$
and therefore, by lemma \ref{prodcomp}, $\displaystyle{E^1_{a,b}(\Kc_1(Y)) = \bigoplus_{p+s=a, q+t = b} E^1_{p,q}(\mathbf{s} \Kc_0) \otimes E^1_{s,t}(\mathcal{G} C(Y)) = 0}$, for all $a,b \in \mathbb{Z}$, again because of the acyclicity of the geometric filtration (notice that in both cases, we did not use the fact that $Y$ was projective nonsingular). 

Consequently, the localizations $\phi'_1, \phi'_2 : \mathbf{Sch}_c(\mathbb{R}) \rightarrow H o \, \mathcal{C}$ of $\phi_1$ and $\phi_2$ respectively are the unique extensions of their respective restrictions $\varphi_1', \varphi_2' : \mathbf{V}(\mathbb{R}) \rightarrow H o \, \mathcal{C}$ given by the Theorem 2.2.2 of \cite{GNA} (notice that the above arguments also prove that the functors $\varphi_1$ and $\varphi_2$ satisfy the disjoint additivity condition $(F_1)$ and the elementary acyclicity condition $(F_2)$). By the naturality of this extension (see Proposition 1.4 of \cite{MP}), the localization of the filtered quasi-isomorphism $u(Y) : \varphi_1 \rightarrow \varphi_2$ extends uniquely into a morphism $\phi'_1 \rightarrow \phi'_2$, and this morphism is an isomorphism of $H o \, \mathcal{C}$. Since the localization of 
$$u(Y) : \phi_1 \rightarrow \phi_2~;~X \mapsto (\Gc_{\bullet} C_*(X) \otimes \Gc_{\bullet} C_*(Y)  \lra  \Gc_{\bullet} C_*(X \times Y))$$
is such an extension, the latter is a quasi-isomorphism of $\mathcal{C}$, that is the morphism $u : \Gc_{\bullet} C_*(X) \otimes \Gc_{\bullet} C_*(Y)  \lra  \Gc_{\bullet} C_*(X \times Y)$ is a filtered quasi-isomorphism for any real algebraic variety $X$ and $Y$ a nonsingular projective variety.
\\

Now fix $X$ to be any real algebraic variety. We prove in the same way as above that
$$u(X) : Y \in \mathbf{Sch}_c(\mathbb{R}) \mapsto (\Gc_{\bullet} C_*(X) \otimes \Gc_{\bullet} C_*(Y)  \lra  \Gc_{\bullet} C_*(X \times Y))$$
is a quasi-isomorphism of $\mathcal{C}$. As a consequence,
$$u : \Gc_{\bullet} C_*(X) \otimes \Gc_{\bullet} C_*(Y)  \lra  \Gc_{\bullet} C_*(X \times Y)$$
is a filtered quasi-isomorphism for any real algebraic varieties $X$ and $Y$.
\end{proof}

\begin{rem} \label{prodextcan}
A morphism between the filtered complexes $\Wcc(X) \ot \Wcc(Y)$ and $\Wcc(X \times Y)$ for any varieties $X$ and $Y$ can also be obtained without using the geometric filtration. Indeed, using a method similar to the one in the previous proof, we can extend to all real algebraic varieties the morphism of filtered complexes
$$F^{can}_{\bullet} C_*(X) \ot F^{can}_{\bullet} C_*(Y) \lra F^{can}_{\bullet} C_*(X \times Y)$$
(given by the product in definition \ref{defprod}) restricted to nonsingular projective varieties.
\end{rem}

\subsection{Product and cohomological weight complex} \label{pcwcsect}

As for the homological weight complex, we show that we can relate the cohomological weight complex of a product with the tensor product of the cohomological weight complexes, so that these two filtered complexes are isomorphic in the localized category $H o \, \mathfrak{C}$. More precisely, this isomorphism of $H o \, \mathfrak{C}$ is induced by two opposite-directional filtered quasi-isomorphisms of cochain complexes, one of them being the dualization of the quasi-isomorphism of $\mathcal{C}$ in theorem~\ref{prodhom}.

\begin{prop} \label{prodcohom}
The filtered complexes
$\Gc^{\bullet} C^*(X) \otimes \Gc^{\bullet} C^*(Y)$ and $\Gc^{\bullet} C^*(X \times Y)$ are isomorphic in $H o \, \mathfrak{C}$.\\
\end{prop}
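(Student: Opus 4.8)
The plan is to realize the desired isomorphism in $H o \, \mathfrak{C}$ as a span of two filtered quasi-isomorphisms of cochain complexes, both with target the dual filtered complex $\left( \Gc_{\bullet} C_*(X) \otimes \Gc_{\bullet} C_*(Y) \right)^{\vee}$, where $(\cdot)^{\vee}$ denotes the dual-filtration construction of remark \ref{grad}. One leg will be the dualization of the chain-level filtered quasi-isomorphism $u$ of theorem \ref{prodhom}; the other will be the natural comparison map from the tensor product of the dual filtrations to the dual of the tensor product. Since localizing with respect to filtered quasi-isomorphisms inverts both legs, composing one with the inverse of the other produces the required isomorphism $\Gc^{\bullet} C^*(X) \otimes \Gc^{\bullet} C^*(Y) \cong \Gc^{\bullet} C^*(X \times Y)$ in $H o \, \mathfrak{C}$.

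First I would treat the comparison map. Writing $K = \Gc_{\bullet} C_*(X)$ and $M = \Gc_{\bullet} C_*(Y)$, so that $K^{\vee} = \Gc^{\bullet} C^*(X)$ and $M^{\vee} = \Gc^{\bullet} C^*(Y)$, there is a natural $\ZZ_2$-linear map
$$\nu \ : \ K^{\vee} \otimes M^{\vee} \lra (K \otimes M)^{\vee}, \qquad \nu(\varphi \otimes \psi)(c \otimes c') = \varphi(c) \, \psi(c'),$$
where the product of the dual filtrations on the source is the decreasing analogue in $\mathfrak{C}$ of definition \ref{pfc}. A direct check shows $\nu$ is filtered: if $\varphi$ vanishes on $\Gc_{a-1} C_*(X)$ and $\psi$ vanishes on $\Gc_{b-1} C_*(Y)$, then $\nu(\varphi \otimes \psi)$ vanishes on each summand $\Gc_{a'} C_*(X) \otimes \Gc_{b'} C_*(Y)$ with $a' + b' = a + b - 1$, since then $a' \lee a-1$ or $b' \lee b-1$. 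To see that $\nu$ is a filtered quasi-isomorphism I would compute on $E_1$: the cohomological analogue of lemma \ref{prodcomp} identifies $E_1(K^{\vee} \otimes M^{\vee})$ with $\bigoplus E_1(K^{\vee}) \otimes E_1(M^{\vee})$, the duality of remark \ref{grad} rewrites this as $\bigoplus (E^1(K))^{\vee} \otimes (E^1(M))^{\vee}$, and lemma \ref{prodcomp} applied to $K \otimes M$ together with the same duality identifies $E_1\left((K \otimes M)^{\vee}\right)$ with $\bigoplus \left(E^1(K) \otimes E^1(M)\right)^{\vee}$. The map $E_1(\nu)$ is precisely the natural arrow between these, which is an isomorphism because the weight spectral sequence terms $E^1(K)$ and $E^1(M)$ are finite-dimensional (lemma \ref{dimfinie} and its homological counterpart), so that taking duals commutes with the tensor product.

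Next I would dualize theorem \ref{prodhom}. Since $u \ : \ K \otimes M \to \Gc_{\bullet} C_*(X \times Y)$ is a filtered quasi-isomorphism, its transpose
$$u^{\vee} \ : \ \Gc^{\bullet} C^*(X \times Y) = \left( \Gc_{\bullet} C_*(X \times Y) \right)^{\vee} \lra (K \otimes M)^{\vee}$$
is filtered (precomposition by a filtered map carries the dual filtration to the dual filtration) and, by the functorial duality $E_r^{p,q}((\cdot)^{\vee}) = (E^r_{p,q}(\cdot))^{\vee}$ of remark \ref{grad}, induces on each $E_r$, $r \gee 1$, the transpose of the isomorphism induced by $u$ on $E^r$; hence $u^{\vee}$ is again a filtered quasi-isomorphism. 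The span
$$\Gc^{\bullet} C^*(X) \otimes \Gc^{\bullet} C^*(Y) \stackrel{\nu}{\lra} (K \otimes M)^{\vee} \stackrel{u^{\vee}}{\longleftarrow} \Gc^{\bullet} C^*(X \times Y)$$
then yields the isomorphism in $H o \, \mathfrak{C}$. The main obstacle is the point hidden in the quasi-isomorphism property of $\nu$: because the semialgebraic chain groups are infinite-dimensional, $\nu$ is injective but not surjective at the cochain level, so dualization does not commute with the tensor product on the nose; one genuinely has to pass to the spectral sequence, where the weight terms become finite-dimensional, to recover the comparison. This is exactly why both theorem \ref{prodhom} (through lemma \ref{prodcomp}) and the finiteness of the weight spectral sequence are needed, rather than a naive algebraic identity of dual and tensor.
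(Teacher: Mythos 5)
Your proposal is correct and follows essentially the same route as the paper: the same span $\Gc^{\bullet}C^*(X) \otimes \Gc^{\bullet}C^*(Y) \to \left(\Gc_{\bullet}C_*(X) \otimes \Gc_{\bullet}C_*(Y)\right)^{\vee} \leftarrow \Gc^{\bullet}C^*(X \times Y)$, with one leg the dual $u^{\vee}$ of the quasi-isomorphism of theorem \ref{prodhom} and the other the natural comparison map (called $w$ in the paper), both checked to be filtered quasi-isomorphisms on $E_1$ via the cohomological version of lemma \ref{prodcomp}, the duality of remark \ref{grad}, and finite-dimensionality of the weight spectral sequence terms. Your closing observation about why finiteness must enter (the comparison map is not an isomorphism at cochain level) is a sound gloss on exactly the point the paper's argument handles.
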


\begin{cor} \label{prodcohomcor}
The filtered complexes $\WC(X) \ot \WC(Y)$ and $\WC(X \times Y)$ are isomorphic in $H o \, \mathfrak{C}$ and the K\"unneth isomorphism in cohomology
$$\Wc^{\bullet} H^*(X) \otimes \Wc^{\bullet} H^*(Y) \lra \Wc^{\bullet} H^*(X \times Y)$$
is a filtered isomorphism with respect to the cohomological weight filtration.\\
\end{cor}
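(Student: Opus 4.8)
The plan is to realize the isomorphism in $H o \, \mathfrak{C}$ as a zig-zag of two filtered quasi-isomorphisms of cochain complexes, both with target the dual of the chain-level tensor product $\Gc_{\bullet}C_*(X) \otimes \Gc_{\bullet}C_*(Y)$, exactly as announced before the statement. The key tool is the dual-filtration functor of remark \ref{grad}: it is a contravariant functor $(\cdot)_\vee : \mathcal{C} \lra \mathfrak{C}$ satisfying $E_r^{p,q}(K_\vee) = (E^r_{p,q}(K))^\vee$ for $r \gee 0$, and it therefore sends a filtered quasi-isomorphism to a filtered quasi-isomorphism (the induced map on each $E_r$, $r \gee 1$, being the transpose of an isomorphism). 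Note also that applying remark \ref{grad} to $K = \Gc_{\bullet}C_*(X \times Y)$ gives the tautological identification $\Gc^{\bullet} C^*(X \times Y) = (\Gc_{\bullet}C_*(X \times Y))_\vee$.

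First I would dualize the filtered quasi-isomorphism $u$ of theorem \ref{prodhom}. Applying $(\cdot)_\vee$ produces a morphism of filtered cochain complexes
$$u_\vee : \Gc^{\bullet} C^*(X \times Y) = (\Gc_{\bullet}C_*(X \times Y))_\vee \lra (\Gc_{\bullet}C_*(X) \otimes \Gc_{\bullet}C_*(Y))_\vee,$$
and since $u$ induces isomorphisms on $E^r$ for $r \gee 1$, the duality of remark \ref{grad} shows that $u_\vee$ induces the transposed isomorphisms on $E_r$ for $r \gee 1$; hence $u_\vee$ is a filtered quasi-isomorphism. This half of the argument is essentially formal once the duality of spectral sequences is in hand.

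The second map is the canonical pairing
$$\kappa : \Gc^{\bullet} C^*(X) \otimes \Gc^{\bullet} C^*(Y) \lra (\Gc_{\bullet}C_*(X) \otimes \Gc_{\bullet}C_*(Y))_\vee, \quad \varphi \otimes \psi \longmapsto \big( c \otimes c' \mapsto \varphi(c)\,\psi(c') \big),$$
where the source carries the (decreasing) analog of the tensor-product filtration of definition \ref{pfc}. I would first check that $\kappa$ respects these filtrations, matching the tensor filtration on cochains with the dual filtration of the chain-level tensor product, and then prove it is a filtered quasi-isomorphism by comparing $E_1$-terms. On the target, remark \ref{grad} together with lemma \ref{prodcomp} gives
$$E_1^{a,b}\big((\Gc_{\bullet}C_*(X) \otimes \Gc_{\bullet}C_*(Y))_\vee\big) = \Big( \bigoplus_{p+s=a,\ q+t=b} E^1_{p,q}(X) \otimes E^1_{s,t}(Y) \Big)^\vee,$$
while the cochain analog of lemma \ref{prodcomp}, combined with the duality $E_1^{p,q}(\Gc^{\bullet}C^*) = (E^1_{p,q}(\Gc_{\bullet}C_*))^\vee$ of remark \ref{grad}, gives on the source
$$E_1^{a,b}\big(\Gc^{\bullet} C^*(X) \otimes \Gc^{\bullet} C^*(Y)\big) \cong \bigoplus_{p+s=a,\ q+t=b} (E^1_{p,q}(X))^\vee \otimes (E^1_{s,t}(Y))^\vee.$$
Here lemma \ref{dimfinie} is essential: because the $E_1$-terms are finite-dimensional, the natural arrow $V^\vee \otimes W^\vee \to (V \otimes W)^\vee$ is an isomorphism, and one verifies that this arrow is precisely the map induced by $\kappa$ on $E_1$. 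Thus $\kappa$ is a filtered quasi-isomorphism, and composing, $u_\vee^{-1} \circ \kappa$ (inverting $u_\vee$ in $H o \, \mathfrak{C}$) yields the desired isomorphism $\Gc^{\bullet} C^*(X) \otimes \Gc^{\bullet} C^*(Y) \cong \Gc^{\bullet} C^*(X \times Y)$ in $H o \, \mathfrak{C}$.

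The main obstacle is the analysis of $\kappa$. Unlike $u_\vee$, it is genuinely \emph{not} an isomorphism of cochain complexes, since $(\Gc_{\bullet}C_*(X) \otimes \Gc_{\bullet}C_*(Y))_\vee$ and $\Gc^{\bullet}C^*(X) \otimes \Gc^{\bullet}C^*(Y)$ differ at the cochain level when the chain groups are infinite-dimensional; the two only become interchangeable after passing to the finite-dimensional $E_1$-terms. So the crux is to isolate precisely why the comparison holds on $E_1$ and not before, which forces one both to establish the cochain version of the Künneth computation of lemma \ref{prodcomp} and to invoke the finiteness of lemma \ref{dimfinie} to commute dualization with the tensor product exactly at level one. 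Verifying that $\kappa$ is filtered, and that the resulting $E_1$-isomorphism is the one induced by $\kappa$ rather than merely an abstract coincidence of dimensions, is the most delicate bookkeeping in the argument.
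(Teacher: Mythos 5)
Your proposal is correct and follows essentially the same route as the paper: the paper's proof of proposition \ref{prodcohom} constructs exactly your zig-zag, with your $u_\vee$ being its $u^{\vee}$ and your pairing $\kappa$ being its map $w$, and it justifies that $w$ is a filtered quasi-isomorphism precisely as you do, via the cohomological version of lemma \ref{prodcomp}, the duality of remark \ref{grad}, and the finite-dimensionality of the $E_1$-terms to identify $V^{\vee}\otimes W^{\vee}$ with $(V\otimes W)^{\vee}$. The corollary then follows, as you indicate, because $\Gc^{\bullet}C^*$ realizes $\WC$ and the isomorphism on $E_{\infty}$-terms gives the filtered K\"unneth isomorphism.
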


\begin{proof} {\it (of Proposition \ref{prodcohom})}

Consider the filtered quasi-isomorphism 
$$u  : \Gc_{\bullet} C_*(X) \otimes \Gc_{\bullet} C_*(Y)  \lra  \Gc_{\bullet} C_*(X \times Y)$$
defined in theorem \ref{prodhom}. Its dual 
$$u^{\vee} \ : \ \begin{array}{rcl} (C_*(X \times Y))^{\vee} & \lra & (C_*(X) \ot C_*(Y) )^{\vee}\\
\eta & \lmt & \dis \left[ \sum_i c_{X,i} \otimes c_{Y,i} \lmt \sum_i \eta( c_{X,i} \times c_{Y,i}) \right]
\end{array}$$
is also a filtered quasi-isomorphism if we equip the dualized complexes with the corresponding dual filtrations (remark \ref{grad}).

On the other hand, the map
$$w \ : \  \begin{array}{ccl}
( C_*(X))^{\vee} \ot ( C_*(Y))^{\vee} & \lra & ( C_*(X) \ot C_*(Y) )^{\vee}\\
\varphi \otimes \psi & \lmt & \dis \left[ \sum_i c_{X,i} \otimes c_{Y,i} \lmt \sum_i \varphi(c_{X,i}) \cdot \psi(c_{Y,i}) \right]
\end{array}$$
where the right-hand side complex is equipped with the same filtration as above (induced by the geometric filtrations of $X$ and $Y$) and the left-hand side complex is the filtered tensor product of the dual geometric filtrations of $X$ and $Y$, is also a filtered quasi-isomorphism. Indeed, for $a,b \in \mathbb{Z}$, we have
$$E_1^{a,b} \left(( C_*(X))^{\vee} \otz ( C_*(Y))^{\vee}\right) = \bigoplus_{p+s=a, q+t=b} \left(E^1_{p,q}(X)\right)^{\vee} \otz \left(E^1_{s,t}(Y)\right)^{\vee},$$
and 
$$E_1^{a,b} \left( ( C_*(X) \otz C_*(Y) )^{\vee} \right) = \left(E^1_{a,b} \left(C_*(X) \otz C_*(Y) \right) \right)^{\vee} = \bigoplus_{p+s=a, q+t=b} \left(E^1_{p,q}(X) \otz E^1_{s,t}(Y)\right)^{\vee},$$
by (the cohomological version of) lemma \ref{prodcomp}, and the morphism $w$ induces on the $E_1$-level the morphisms
$$\left(E^1_{p,q}(X)\right)^{\vee} \otz \left(E^1_{s,t}(Y)\right)^{\vee} \longrightarrow \left(E^1_{p,q}(X) \otz E^1_{s,t}(Y)\right)^{\vee},$$
given by $\overline{\varphi} \otimes \widehat{\psi} \mapsto \left[ \, \sum_i \overline{c_{X,i}} \otimes \widehat{c_{Y,i}} \mapsto \sum_i \varphi(c_{X,i}) \cdot \psi(c_{Y,i}) \, \right]$, which are isomorphisms (the terms of the weight spectral sequences of $X$ and $Y$ are finite-dimensional).
\\

Therefore, we have the following diagram in $\mathfrak{C}$ 
$$(C_*(X \times Y))^{\vee}  \xrightarrow{u^{\vee}} (C_*(X) \otz C_*(Y) )^{\vee} \xleftarrow{w} ( C_*(X))^{\vee} \otz ( C_*(Y))^{\vee},$$
where the morphisms $u^{\vee}$ and $w$ are filtered quasi-isomorphisms. Consequently, in the localization $H o \, \mathfrak{C}$ of $\mathfrak{C}$ with respect to filtered quasi-isomorphisms, the filtered complexes $\Gc^{\bullet} C^*(X) \otimes \Gc^{\bullet} C^*(Y)$ and $\Gc^{\bullet} C^*(X \times Y)$ are isomorphic. 

\end{proof}

\begin{rem} \label{cancohom}
As for the homological case, a morphism between the filtered complexes $\WC(X) \ot \WC(Y)$ and $\WC(X \times Y)$ can be obtained without using the geometric filtration. Indeed, in the previous proof, consider the canonical filtration in place of the geometric filtration : one can show in the same way that there is an isomorphism of $H o \, \mathfrak{C}$ between $(F^{can})^{\bullet}_{\vee} C^*(X) \ot (F^{can})^{\bullet}_{\vee} C^*(Y)$ and $(F^{can})^{\bullet}_{\vee} C^*(X \times Y)$. Since the dual canonical filtration and the cohomological canonical filtration are filtered quasi-isomorphic (see the proof of theorem \ref{coincid}), we deduce an isomorphism of $H o \, \mathfrak{C}$ between $F_{can}^{\bullet} C^*(X) \ot F_{can}^{\bullet} C^*(Y)$ and $F_{can}^{\bullet} C^*(X \times Y)$. Restricting this isomorphism to projective nonsingular varieties, we extend it to all real algebraic varieties in the same way as in proof of theorem \ref{prodhom} (see also remark \ref{prodextcan}) to obtain an isomorphism between $\WC(X) \ot \WC(Y)$ and $\WC(X \times Y)$.
\end{rem}

\subsection{Cup product} \label{cupsect}

Let $X$ be a real algebraic variety. 

We show below that the cup product on the cohomology with compact supports $H^*(X)$ of the set of real points of $X$ is filtered with respect to the cohomological weight filtration. Precisely, we define a cup product on the cochain level in the derived category $H o \, \mathfrak{C}$, on the cohomological geometric filtration, using the filtered quasi-isomorphisms $w$ and $u^{\vee}$ defined above in the proof of \ref{prodcohom}, that induces a cup product on the cohomological weight spectral sequence of $X$ and the usual cup product on the cohomology of $X$. 
\\

Let $\Delta$ denote the diagonal map
$$\Delta \ : \ \begin{array}{rcl}
X & \longrightarrow & X \times X\\
x & \longmapsto & (x,x)
\end{array}$$
Now consider the cohomological geometric filtration $\mathcal{G}^{\bullet} C^*(X)$ of $X$ as an object of the derived category $H o \, \mathfrak{C}$. We can apply the composition $\Delta^* \circ (u^{\vee})^{-1} \circ w$ to the tensor product $\mathcal{G}^{\bullet} C^*(X) \otimes \mathcal{G}^{\bullet} C^*(X)$ ($u^{\vee}$ is an isomorphism of $H o \, \mathfrak{C}$, see the proof of proposition \ref{prodcohom}) :
$$\Delta^* \circ (u^{\vee})^{-1} \circ w : \mathcal{G}^{\bullet} C^*(X) \otimes \mathcal{G}^{\bullet} C^*(X) \xrightarrow{(u^{\vee})^{-1} \circ w} \mathcal{G}^{\bullet} C^*(X \times X) \xrightarrow{\Delta^*} \mathcal{G}^{\bullet} C^*(X)$$
We denote this morphism of $H o \, \mathfrak{C}$ by $\smile$.

\begin{prop}  \label{cupprodinf} $\\ $
The cup product
$$\smile~: \mathcal{G}^{\bullet} C^*(X) \otimes \mathcal{G}^{\bullet} C^*(X) \longrightarrow \mathcal{G}^{\bullet} C^*(X)$$
in $H o \, \mathfrak{C}$ induces a morphism of spectral sequences
$$\smile'_r \ : \ \bigoplus_{p+s=a, \ q+t=b} E_r^{p,q}(X) \otz E_r^{s,t}(X) \lra E_r^{a,b}(X)$$
and the usual cup product
$$\begin{array}{ccl}
H^*(X) \otz H^*(X) & \stackrel{\smile}{\longrightarrow} & H^*(X)\\
\varphi \otimes \psi & \longmapsto & \varphi \smile \psi =  [ \Delta^*(\varphi \times \psi) ]
\end{array}.$$
In particular, the cup product in cohomology is a filtered map with respect to the cohomological weight filtration.
\end{prop}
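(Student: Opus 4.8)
The plan is to exploit that the cup product $\smile := \Delta^* \circ (u^{\vee})^{-1} \circ w$ is, by its very construction, a morphism in the localized category $H o \, \mathfrak{C}$, and that every such morphism automatically induces morphisms on the terms $E_r$ (for $r \gee 1$) of the associated spectral sequences: filtered quasi-isomorphisms are precisely what is inverted in $H o \, \mathfrak{C}$, and by definition these induce isomorphisms on all $E_r$ with $r \gee 1$. Applying this to the source $\mathcal{G}^{\bullet} C^*(X) \otimes \mathcal{G}^{\bullet} C^*(X)$ and identifying its $E_r$-term with $\bigoplus_{p+s=a,\ q+t=b} E_r^{p,q}(X) \otz E_r^{s,t}(X)$ via the cohomological version of lemma \ref{prodcomp}, I would read off directly the announced morphism of spectral sequences $\smile'_r$, and at $r = \infty$ a morphism on the weight-filtered cohomology.

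The core of the argument is then to identify the map that $\smile$ induces on cohomology with the usual cup product, and I would compute the effect of the three factors separately. The map $w$ sends $\varphi \otimes \psi$ to the functional $\sum_i c_{X,i} \otimes c_{X,i}' \longmapsto \sum_i \varphi(c_{X,i}) \cdot \psi(c_{X,i}')$ on $C_*(X) \otz C_*(X)$. The key point is that $(u^{\vee})^{-1} \circ w$ recovers the cohomology cross product $\varphi \otimes \psi \longmapsto \varphi \times \psi$: indeed, by definition of $u^{\vee}$ one has $u^{\vee}(\eta)(c_X \otimes c_X') = \eta(c_X \times c_X')$, so the unique $\eta$ with $u^{\vee}(\eta) = w(\varphi \otimes \psi)$ is exactly the cochain $\varphi \times \psi$ characterized by $\langle \varphi \times \psi,\ c_X \times c_X' \rangle = \varphi(c_X)\cdot\psi(c_X')$. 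Since $u^{\vee}$ is a filtered quasi-isomorphism, $(u^{\vee})^{-1}$ induces on cohomology the inverse of the isomorphism induced by $u^{\vee}$, so this identification is legitimate at the cohomology level. Finally $\Delta^*$ produces $\Delta^*(\varphi \times \psi)$, which is by definition the usual cup product $\varphi \smile \psi$ in $H^*(X)$, as asserted.

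To conclude the filtration statement I would invoke that $\smile$, being a morphism of $H o \, \mathfrak{C}$, is filtered, hence sends the tensor-product filtration of the source into the cohomological weight filtration of $H^*(X)$. Transporting the tensor filtration through the Künneth identification of lemma \ref{prodcomp} (where the finite-dimensionality of the $E_1$-terms, lemma \ref{dimfinie}, guarantees the relevant maps are isomorphisms) shows that the summand $\mathcal{W}^p H^a(X) \otimes \mathcal{W}^{p'} H^{a'}(X)$ lands in $\mathcal{W}^{p+p'} H^{a+a'}(X)$, that is $\mathcal{W}^p H^*(X) \smile \mathcal{W}^{p'} H^*(X) \subset \mathcal{W}^{p+p'} H^*(X)$; this is exactly the compatibility of the cup product with the cohomological weight filtration of corollary \ref{fcoho}.

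I expect the main obstacle to be the second step: one must check carefully that the formally-defined zigzag in $H o \, \mathfrak{C}$, which involves the inverse $(u^{\vee})^{-1}$, genuinely computes the concrete cohomology cross product at the cohomology level, and that the dual pairing identity $\langle \varphi \times \psi,\ c_X \times c_X' \rangle = \varphi(c_X)\cdot\psi(c_X')$ holds at the (co)chain level through the identifications of remark \ref{grad}. Working over $\mathbb{Z}_2$ removes the usual Künneth sign bookkeeping, which substantially simplifies this verification.
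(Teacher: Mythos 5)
Your proposal is correct and takes essentially the same approach as the paper: the morphism of spectral sequences is read off from the cohomological version of lemma \ref{prodcomp}, and the map induced on cohomology is identified with the usual cup product because $(u^{\vee})^{-1}\circ w$ induces the K\"unneth isomorphism (this is exactly what proposition \ref{prodcohom} and corollary \ref{prodcohomcor} provide), composed with $\Delta^*$. Your explicit pairing verification $\langle \varphi\times\psi,\ c\times c'\rangle=\varphi(c)\cdot\psi(c')$, carried out at the cohomology level where $(u^{\vee})^{-1}$ is legitimate, simply fills in a detail that the paper delegates to those citations.
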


\begin{proof}
The first fact follows from the cohomological version of lemma \ref{prodcomp}, and the cup product in cohomology is the composition of $\Delta^*$ and the K\"unneth isomorphism in cohomology, which is itself induced by $(u^{\vee})^{-1} \circ w$ (see proposition \ref{prodcohom} and corollary \ref{prodcohomcor}).
\end{proof}

\subsection{Cap product} \label{capsect}

In this section, we define a cap product on the homological and cohomological geometric filtrations considered in the corresponding derived categories $H o \, \mathcal{C}$ and $H o \, \mathfrak{C}$. This cap product on chain level induces a cap product on the homological and cohomological weight spectral sequences, showing that the cap product on homology and cohomology is a filtered morphism with respect to the homological and cohomological weight filtrations.
\\

First, we give a filtered chain complex structure to the tensor product of a filtered cochain complex and a filtered chain complex :

\begin{definition} \label{pfc2}
Let $(K^{*},F)$ and $(M_{*},J)$ be respectively a filtered cochain complex of $\mathfrak{C}$ and a filtered chain complex of $\Cc$. We define $\left( (K^* \otimes M_* )_{*} , F \ot J \right)$ to be the chain complex given by
$$(K \otimes M)_n := \bigoplus_{j-i=n} K^i \otz M_j,$$
equipped with the differential
$$\partial(x \otimes y) := dx \otimes y + x \otimes \partial y$$
and the bounded increasing filtration given by
$$(F \ot J)_p (K \otimes M)_n = \bigoplus_{j-i=n} \sum_{b-a=p} F^a K^i \otimes J_b M_j$$
\end{definition}

Considering the semialgebraic chain and cochain complexes $C_*(X)$ and $C^*(X)$ of $X$, implicitely equipped with the homological and cohomological geometric filtrations (for sake of readability), as objects of the respective derived categories $H o \, \mathcal{C}$ and $H o \, \mathfrak{C}$, we are going to define a cap product $C^*(X) \otimes C_*(X) \rightarrow C_*(X)$ in $H o \, \mathcal{C}$. 
\\

First, let $\omega$ denote the morphism $C^*(X) = (C_*(X))^{\vee} \rightarrow (C^*(X) \otimes C_*(X))^{\vee}$ of $H o \, \mathfrak{C}$ given by
$$\begin{array}{rcl} (C_{l-m}(X))^{\vee} & \lra & (C^m(X) \otimes C_{l}(X) )^{\vee}\\
\psi & \lmt & \dis \left[ \varphi \otimes c \lmt (\psi \smile \varphi)(c) \right]
\end{array}$$

The cap product that we define below will be obtained from the dual of this filtered morphism, in order to have a formula 
\begin{equation} \label{eqcupcap} \psi(\varphi \frown c) = (\psi \smile \varphi)(c) \end{equation} on the chain level. We make precise what we mean by the dual filtered chain complex of a filtered cochain complex :

\begin{definition} If $F^{\bullet} K^*$ is a filtered cochain complex of $\mathfrak{C}$, we define its dual filtered chain complex $F^{\vee}_{\bullet} K^{\vee}_*$ of $\mathcal{C}$ by
$$F^{\vee}_p K^{\vee}_q := \left\{\eta \in K^{\vee}_q~|~ \eta \equiv 0 \mbox{  on  } F^{p+1} K^q \right\}.$$
\end{definition}

Notice that, as in remark \ref{grad}, we have the natural isomorphism of spectral sequences given by $E^r_{a,b}(F^{\vee} K^{\vee}) = \left(E_r^{a,b}(F K)\right)^{\vee}$. 
\\

Consider the dual filtered chain complexes $(C_*(X))^{\vee \vee}$ and $(C^*(X) \otimes C_*(X))^{\vee \vee}$ of $(C_*(X))^{\vee}$ and $(C^*(X) \otimes C_*(X))^{\vee}$ respectively. We have natural filtered morphisms $\nu : C_*(X) \rightarrow (C_*(X))^{\vee \vee}$ and $\mu : C^*(X) \otimes C_*(X) \rightarrow (C^*(X) \otimes C_*(X))^{\vee \vee}$, inducing the natural morphims $E^r_{a,b} \rightarrow \left(E^r_{a,b}\right)^{\vee \vee}$ on the spectral sequence level, which are isomorphisms from $r \gee 1$ (the terms of the spectral sequence are finite-dimensional from level one). 

Therefore, the morphims $\nu$ and $\mu$ are quasi-isomorphisms of $\mathcal{C}$ and we can define the morphism 
$$\nu^{-1} \circ \omega^{\vee} \circ \mu : C^*(X) \otimes C_*(X) \rightarrow C_*(X)$$
of $H o \, \mathcal{C}$ given by 
$$\begin{array}{rcl} C^m(X) \otimes C_{l}(X) & \lra & C_{l-m}(X)\\
\varphi \otimes c & \lmt & \varphi \frown c := \nu^{-1} \circ \omega^{\vee} \circ \mu(\varphi \otimes c) 
\end{array}$$ 

We denote it also by $\frown$ and we have : 

\begin{prop}
The cap product on the geometric filtrations of $X$ induces a cap product 
$$\ E_{r}^{p,q}(X) \otimes E^{r}_{s,t}(X) \lra E^{r}_{s-p, \ t-q}(X)$$
on the weight spectral sequences of $X$, and the usual cap product 
$$H^*(X) \otimes H_*(X) \stackrel{\frown}{\longrightarrow} H_*(X)$$
on the homology and cohomology of $X$. In particular, the latter is a filtered morphism with respect to the weight filtrations (the filtration on the tensor product of cohomology and homology is defined in a way similar to definition \ref{pfc2}). 
\end{prop}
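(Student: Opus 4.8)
The plan is to exploit the fact that $\frown = \nu^{-1} \circ \omega^{\vee} \circ \mu$ is, by its very construction, a morphism of the localized category $H o \, \mathcal{C}$, so that it automatically induces morphisms on the spectral sequence terms $E^r$ for $r \gee 1$, these being well-defined and functorial in $H o \, \mathcal{C}$. First I would record that each factor is filtered: $\omega$ is a filtered morphism of $\mathfrak{C}$ built from the filtered cup product $\smile$ of proposition \ref{cupprodinf}, hence its dual $\omega^{\vee}$ is a filtered morphism of $\mathcal{C}$ for the dual filtrations (as in remark \ref{grad} and the definition of the dual filtered chain complex), while $\nu$ and $\mu$ are the canonical biduality morphisms, which are filtered quasi-isomorphisms as already noted. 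This shows that $\frown$ is a genuine morphism of $H o \, \mathcal{C}$ and therefore descends to the $E^r$.

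Next I would identify the induced map and check its bidegree. Applying the duality of spectral sequences $E^r_{a,b}(F^{\vee}K^{\vee}) = (E_r^{a,b}(FK))^{\vee}$ together with the K\"unneth decomposition of lemma \ref{prodcomp} (and its cohomological version) to the source $C^*(X) \otimes C_*(X)$, the biduality morphisms $\mu$ and $\nu$ induce the canonical isomorphisms $E^r \cong (E^r)^{\vee\vee}$, which are isomorphisms for $r \gee 1$ because the terms are finite-dimensional (lemma \ref{dimfinie}); hence $\nu^{-1}$ and $\mu$ are invisible on $E^r$ up to these isomorphisms. The only nontrivial contribution is $\omega^{\vee}$, whose effect on $E^r$ is the dual (transpose) of the map induced by $\omega$. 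Since $\omega$ is assembled from $\smile$, which induces $\smile'_r : E_r^{p,q} \otimes E_r^{s,t} \to E_r^{p+s,\, q+t}$, transposing and reading off the pairing between the cohomological spectral sequence $E_r^{\bullet,\bullet}$ and the homological one $E^r_{\bullet,\bullet}$ produces precisely the cap product
$$E_r^{p,q}(X) \otimes E^r_{s,t}(X) \lra E^r_{s-p,\, t-q}(X).$$
The degree shift $(s-p,\, t-q)$ is forced by the defining relation \ref{eqcupcap}, $\psi(\varphi \frown c) = (\psi \smile \varphi)(c)$, which pairs cohomological bidegree $(p,q)$ against homological bidegree $(s,t)$.

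To identify the limit, I would pass to $r = \infty$, that is to the associated graded of the weight filtrations. The relation \ref{eqcupcap} exhibits $\frown$ as the adjoint of $\smile$ under the perfect pairing between $H^*(X)$ and $H_*(X)$; since $\smile$ induces the usual cup product (proposition \ref{cupprodinf}), its adjoint is by definition the usual cap product $H^*(X) \otimes H_*(X) \to H_*(X)$, $\varphi \otimes c \mapsto \varphi \frown c$. Finally, because $\frown$ is a morphism of $H o \, \mathcal{C}$ it respects the filtrations, so the convergence of the spectral sequences to the weight filtrations, together with the bidegree computed above, yields that the usual cap product sends $\Wc^p H^m(X) \otimes \Wc_s H_l(X)$ into $\Wc_{s-p} H_{l-m}(X)$; this is the asserted filtered-ness with respect to the weight filtrations, the filtration on the source being the one of definition \ref{pfc2}.

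The main obstacle will be the index bookkeeping through the two dualizations. One has to verify that $\omega$, built from the cup product and the duality isomorphisms of remark \ref{grad}, is filtered of the correct degree, that its transpose $\omega^{\vee}$ shifts the homological bidegree by exactly $(-p,-q)$ when paired against a cohomology class of bidegree $(p,q)$, and that the biduality isomorphisms $\nu$, $\mu$ genuinely reduce to the identity on $E^r$ for $r \gee 1$ --- this last point relying crucially on the finite-dimensionality of the $E^r$-terms from lemma \ref{dimfinie}, which is what makes $\nu$ and $\mu$ invertible on spectral sequences and hence legitimizes the composite $\nu^{-1} \circ \omega^{\vee} \circ \mu$.
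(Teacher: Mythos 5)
Your proposal is correct and follows essentially the same route as the paper's own proof: decompose the spectral sequence of $C^*(X)\otimes C_*(X)$ via (the mixed version of) lemma \ref{prodcomp}, use that $\frown = \nu^{-1}\circ\omega^{\vee}\circ\mu$ is a morphism of $H o \, \mathcal{C}$ (with $\nu$, $\mu$ invisible on $E^r$ by finite-dimensionality) to get the maps $\frown'_r$ of bidegree $(s-p,\,t-q)$, and then invoke the adjunction formula $\psi(\varphi\frown c)=(\psi\smile\varphi)(c)$ together with proposition \ref{cupprodinf} to identify the limit with the usual cap product, which is characterized by exactly that formula. The paper's write-up is terser but contains no idea beyond what you have laid out.
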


\begin{proof} Similarly to lemma \ref{prodcomp}, the term of level $r$ and indices $a,b$ of the spectral sequence induced by $C^*(X) \otimes C_*(X)$ is given by $\displaystyle{\bigoplus_{s-p=a, \ t-q=b} E_r^{p,q}(X) \otimes E^r_{s,t}(X)}$. Then the cap product on chains and cochains induces morphisms
$$\frown'_r : \bigoplus_{s-p=a, \ t-q=b} E_r^{p,q}(X) \otimes E^r_{s,t}(X) \lra E_r^{a,b}(X).$$
Now, notice that the formula (\ref{eqcupcap}) on the chain level induce that, if $\varphi \in  E_{r}^{p,q}(X)$, $c \in E^{r}_{s,t}(X)$ and $\psi \in E_{r}^{p-s,q-t}(X)$ (then $\psi \smile \varphi \in E_{r}^{p,q}(X)$, which is isomorphic to $\left(  E^{r}_{p,q}(X) \right)^{\vee}$), we have
$$\psi(\varphi \frown'_r c) = (\psi \smile'_r \varphi)(c).$$

Since the cup product on the cohomological weight spectral sequence induces the cup product on cohomology and because the cap product on cohomology and homology
$$H^m(X) \otimes_{\mathbb{Z}_2} H_{l}(X) \stackrel{\frown}{\longrightarrow} H_{l-m}(X)$$
is characterized by the formula
$$\psi(\varphi \frown c) = (\psi \smile \varphi)(c)$$
(if $\varphi \in H^m(X)$ and $c \in H_l(X)$, $\varphi \frown c$ is the unique element of $H_{l-m}(X)$ verifying this formula for all $\psi \in H^{l-m}(X)$), the cap product on the cohomological and homological weight spectral sequences induces the cap product on cohomology and homology.
\end{proof}

\begin{rem}
\begin{enumerate}
	\item If $\varphi \in  E_{r}^{p,q}(X)$ and $c \in E^{r}_{s,t}(X)$ then $\varphi \frown'_r c$ is the unique element of $E^{r}_{s-p, \ t-q}(X)$ verifying
$$\psi(\varphi \frown'_r c) = (\psi \smile'_r \varphi)(c)$$
for all $\psi \in E_{r}^{p-s,q-t}(X)$.
	\item Another possible definition for the cap product on the chain level is the following one (see \cite{Spa}). Consider the morphism
	$$h : \begin{array}{rcl} C^*(X) \otimes (C_*(X) \otimes C_*(X)) & \lra & C_*(X) \\
\varphi \otimes (a \otimes b) & \lmt & \varphi(a) \cdot b
\end{array}.$$
Then we can also define the cap product on the cohomological and homological geometric filtrations of $X$ (regarded as objects of $H o \, \mathfrak{C}$ and $H o \, \mathcal{C}$) by setting
$$ \varphi \frown c := h ( \varphi \otimes u^{-1}(\Delta_*(c))).$$
Notice that this definition would be valid with integer coefficients as well.
\end{enumerate}
\end{rem}

\subsection{Weight filtrations and Poincar\'e duality map} \label{wfpdmsect}

Let $X$ be a compact real algebraic variety of dimension $n$. 

The semialgebraic chain $[X]$ is pure, that is $[X] \in \Gc_{-n} C_n(X)$. For $r \gee 1$, it induces homology classes in the weight spectral sequence terms $E^{r}_{-n,2n}(X)$. \\ 

By taking the cap product with $[X]$, we obtain a map $D$ on the cohomological weight spectral sequence of $X$, given by :
\begin{equation*}
D_r^{s,t} := \cdot \frown [X] \ : \ \begin{array}{ccl}
E_{r}^{s,t}(X) & \lra & E^{r}_{-n-s,2n-t}(X)\\
\varphi & \longmapsto & \varphi \frown [X]
\end{array}
\end{equation*}

Recall that the non-zero terms of the weight spectral sequences lie in the triangle given by the inequalities $t \gee - 2s$, $s \lee 0$ and $t \lee -s+n$, the terms induced by the pure chains lying in the line $t = - 2s$. Then if, for any $r \gee 1$, we consider the cap product of non-pure classes by $[X]$, it is identically zero.
\\

The map $D$ on the cohomological weight spectral sequence induces, on the $E^{\infty}$ and $E_{\infty}$ level, the classical Poincar\'e duality map on the cohomology of $X$ (that we denote again by $D$) given by
$$\ \begin{array}{ccl}
H^k(X) & \lra & H_{n-k}(X)\\
\varphi & \longmapsto & \varphi \frown [X]
\end{array}$$
($[X]$ corresponds here to the fundamental homology class of $X$) and :

\begin{prop} For all $p$ and $k$ in $\mathbb{Z}$, the image of $\mathcal{W}^p H^k(X)$ by Poincar\'e duality map is in $\mathcal{W}_{-p-n} H_{n-k}(X)$ : 
$$D(\mathcal{W}^p H^k(X)) \subset \mathcal{W}_{-p-n} H_{n-k}(X).$$

In particular, for all $k \in \mathbb{Z}$, $D(H^k(X)) \subset \mathcal{W}_{k-n} H_{n-k}(X)$ and, if $p > -k$, $D(\mathcal{W}^p H^k(X)) =~0$. In other words, all the non-pure cohomology classes are in the kernel of Poincar\'e duality map and the pure cohomology classes are the only classes which may be sent to a nonzero pure homology class by Poincar\'e duality map. Therefore, if its weight filtrations are not pure, a real algebraic variety does not satisfy Poincar\'e duality.
\end{prop}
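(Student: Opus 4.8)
The plan is to deduce the statement from the fact, established in the preceding proposition, that the cap product $\frown$ is a filtered morphism on homology and cohomology, combined with the purity of the fundamental class $[X]$ and the triangular shape of the support of the weight spectral sequences.

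First I would record the filtration degrees. Since $[X] \in \Gc_{-n} C_n(X)$ is pure, its homology class lies in $\Wc_{-n} H_n(X)$. The Poincar\'e duality map $D = \cdot \frown [X]$ is the composite of $\varphi \mapsto \varphi \otimes [X]$ with the cap product $H^*(X) \otimes H_*(X) \to H_*(X)$. With the tensor-product filtration of definition \ref{pfc2}, an element $\varphi \in \Wc^p H^k(X)$ tensored with $[X] \in \Wc_{-n} H_n(X)$ sits in filtration level $(-n)-p$; as the cap product is filtered, its image $\varphi \frown [X]$ lies in $\Wc_{-n-p} H_{n-k}(X)$. This yields the first inclusion $D(\Wc^p H^k(X)) \subset \Wc_{-n-p} H_{n-k}(X)$. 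Equivalently, on the spectral sequence level $D_r$ sends $E_r^{s,t}(X)$ into $E^r_{-n-s,\,2n-t}(X)$ (cap product with the class of $[X]$ in $E^r_{-n,2n}(X)$), and passing to $E_\infty/E^\infty$ gives the same shift on the associated graded.

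Taking $p=-k$ and using $H^k(X) = \Wc^{-k} H^k(X)$ gives the particular case $D(H^k(X)) \subset \Wc_{k-n} H_{n-k}(X)$. For the vanishing, I would invoke the triangular support: in homological degree $m = n-k$ the nonzero terms $E^\infty_{s',t'}$ satisfy $s' \geq -m = k-n$, so $\Wc_{s'} H_{n-k}(X) = 0$ whenever $s' < k-n$. When $p > -k$ we have $-n-p < k-n$, hence $\Wc_{-n-p} H_{n-k}(X) = 0$ and $D(\Wc^p H^k(X)) = 0$. The same conclusion is visible directly on the spectral sequence: $D_r$ maps $E_r^{s,t}(X)$, which is supported on $t \geq -2s$, into $E^r_{-n-s,\,2n-t}(X)$, whose nonvanishing forces $t \leq -2s$; the two are compatible only on the pure line $t=-2s$, so $D_r$ annihilates every non-pure class, which is exactly the content of the remark preceding the proposition.

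The argument is essentially bookkeeping once the cap product is known to be filtered, so I do not expect a deep obstacle; the point demanding care is the correct tracking of the filtration indices (increasing homological versus decreasing cohomological, together with the $b-a$ convention of definition \ref{pfc2}) and the precise location of the bottom edge $s'=k-n$ of the triangle that controls the vanishing. Finally, the last assertion follows formally: if $\Wc^{-k+1} H^k(X) \neq 0$ for some $k$, then this nonzero subspace lies in the kernel of $D \colon H^k(X) \to H_{n-k}(X)$, so $D$ cannot be an isomorphism and Poincar\'e duality fails.
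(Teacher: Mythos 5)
Your proposal is correct and follows essentially the same route as the paper, which states this proposition without a separate proof precisely because it is a direct consequence of the preceding discussion: the purity $[X] \in \Gc_{-n}C_n(X)$, the induced map $D_r^{s,t} : E_r^{s,t}(X) \to E^r_{-n-s,\,2n-t}(X)$ coming from the filtered cap product of subsection \ref{capsect}, and the triangular support of the weight spectral sequences. Your bookkeeping of the filtration shift via definition \ref{pfc2} (level $-n-p$ for $\varphi \otimes [X]$), the identification of the bottom edge $\Wc_{s'}H_{n-k}(X)=0$ for $s' < k-n$, and the pure-line compatibility argument for the vanishing all match the paper's intended argument.
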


\begin{rem} \label{ex1}
On the other hand, there exist varieties having pure weight filtration but not satisfying Poincaré duality. 

For example, let $X$ denote the pinched torus, obtained from a torus $T$ by identifying a circle which generates it as a revolution surface to a point. To compute its weight spectral sequence, we consider the cubical hyperresolution of $X$ given by the blowing-up at $x_0$ :
$$\begin{array}{ccc}
S^1 & \hra & T\\
\downarrow & & \downarrow\\
\bullet & \hra & X
\end{array}$$
We obtain a pure weight filtration given by the term $\widetilde E^{2} = \widetilde E^{\infty}$ :
$$\widetilde E^{2} = \left\lfloor  \begin{array}{ccc}
\ZZ_2 \cdot [X]&   & \\
\ZZ_2 \cdot \overline{[a]}& 0 & \\
\ZZ_2 & 0 & 0
\end{array} \right.$$
(if $H_1(T) = \mathbb{Z}_2 \overline{[a]} \oplus \mathbb{Z}_2 \overline{[b]}$ with $b = S^1$ the exceptional divisor of the blowing-up). However, the variety $X$ does not satisfy Poincaré duality since $\overline{[a]}^{\vee} \frown [X] = 0$.
\end{rem}

\vspace{0.5cm}
Thierry LIMOGES
\\
Lyc\'ee Lamartine
\\
381 avenue des Gaises
\\
71000 M\^acon
\\
FRANCE
\\
thierry.limoges@laposte.net
\vspace{0.5cm}
\\
Fabien PRIZIAC
\\
Department of Mathematics
\\
Faculty of Science
\\
Saitama University
\\
255 Shimo-Okubo, Sakura-ku, Saitama City
\\
338-8570
\\
JAPAN
\\
priziac.fabien@gmail.com

\end{document}